\newtheorem{theorem}{Theorem}[section]
\newtheorem{lemma}[theorem]{Lemma}
\newtheorem{proposition}[theorem]{Proposition}
\theoremstyle{definition}
\newtheorem{definition}[theorem]{Definition}
\newtheorem{assumption}[theorem]{Assumption}
\newtheorem*{rep@theorem}{\rep@title}
\newcommand{\newreptheorem}[2]{%
\newenvironment{rep#1}[1]{%
 \def\rep@title{#2 \ref{##1}}%
 \begin{rep@theorem}}%
 {\end{rep@theorem}}}
\theoremstyle{remark}
\newtheorem{remark}[theorem]{Remark}
\numberwithin{equation}{section}
\newcommand{\setr}{\mathbb{R}}
\newcommand{\id}{\operatorname{Id}}
\newcommand{\bigo}{\mathcal{O}}
\newcommand{\fsccinf}{C_c^\infty}
\newcommand{\sgn}{\operatorname{sgn}}
\newcommand{\pd}{\partial}
\newcommand{\supp}{\operatorname{supp}}
\newcommand{\cim}{\operatorname{Im}}
\newcommand{\cre}{\operatorname{Re}}
\newcommand{\abs}[1]{\lvert#1\rvert}
\begin{document}

\title[Sharp Polynomial Decay in Cylindrical Waveguides]{Sharp Polynomial Decay for Waves Damped from the Boundary in Cylindrical Waveguides}

\author{Ruoyu P. T. Wang}
\address{Department of Mathematics, Northwestern University, Evanston, Illinois 60208}
\email{rptwang@math.northwestern.edu}

\subjclass[2010]{35L05, 47B44}

\date{\today}

\keywords{damped wave, boundary stabilisation, sharp polynomial decay, non-compact manifold, interior impedance problem, cylindrical waveguide}

\begin{abstract}
We study the decay of global energy for the wave equation with Hölder continuous damping placed on the $C^{1,1}$-boundary of compact and non-compact waveguides with star-shaped cross-sections. We show there is sharp $t^{-1/2}$-decay when the damping is uniformly bounded from below on the cylindrical wall of product cylinders where the Geometric Control Condition is violated. On non-product cylinders, we also show that there is $t^{-1/3}$-decay when the damping is uniformly bounded from below on the cylindrical wall. 
\end{abstract}

\maketitle
\section{Introduction}
\subsection{Introduction and Main Results}
In this paper, we are interested in obtaining polynomial decay of global energy for waves that are damped from the boundary of cylindrical waveguides, by using the semigroup theory and the resolvent estimates for interior impedance problems. 
\subsubsection{Sharp Polynomial Decay of Damped Waves}
Let $d\ge 1$ and we impose a geometric assumption:
\begin{definition}[Product Cylinders]\label{0gp}A compact product cylinder $\Omega$ with star-shaped cross-sections is a $(d+1)$-dimensional product manifold $\Omega=\Omega_x\times[-1,1]_y$, where $\Omega_x\subset\mathbb{R}^d$ is bounded and has $C^{1,1}$-boundary $\pd\Omega_x$ which has finitely many components. Assume further that $\Omega_x$ is star-shaped with respect to $x=0$, that is, there exists $c_0>0$ with $x\cdot n_x\ge c_0>0$ almost everywhere on $\pd\Omega_x$, where $n_x$ is the outward-pointing normal defined almost everywhere. We call $\Omega$ an infinite product cylinder if we replace $[0,1]_y$ by $\mathbb{R}_y$ in the assumption of compact ones. See Figure \ref{f1} and Figure \ref{f3} for examples. 
\end{definition}
A compact product cylinder $\Omega$ is a bounded Lipschitz domain: even when $\pd\Omega_x$ is smooth, $\Omega$ is still a manifold with corners of codimension 2. The boundary of $\Omega$ consists of three $C^{1,1}$-pieces: we have $\pd\Omega=\Gamma\cup(\Gamma_-\sqcup\Gamma_+)$ where
\begin{equation}\label{1l12}
\Gamma=\{(x,y)\in\Omega: x\in \pd\Omega_x\}, \ \Gamma_\pm=\{(x,\pm1)\in\Omega\}.
\end{equation}
We call $\Gamma$ the cylindrical wall and $\Gamma_\pm$ the cylindrical caps. We will consider two non-negative real $L^\infty$-boundary damping functions:
\begin{assumption}[Damping Assumption]\label{0ad} Let the damping functions $a(p), b(p)$ in $L^\infty(\Gamma)$ satisfy
\begin{gather}
0<a_0\le a(p)\le a_1, \ p\in\Gamma,\\
0\le b(p)\le b_1, \ p\in\Gamma
\end{gather}
for some constants $a_0, b_1, a_1>0$. Note that we have
\begin{equation}\label{1l4}
b(p)\le\frac{b_1}{a_1}a(p), \ p\in \Gamma.
\end{equation}
\end{assumption}
\begin{figure}
\begin{tikzpicture}[minimum size=0.01cm]
\draw[fill=gray!50] (0, 0.8) arc (90:270:-0.16 and 0.8) -- (3, -0.8) -- (3, -0.8) arc (270:90:-0.16 and 0.8) -- (3,0.8) -- cycle;
\draw (0,0) ellipse (0.16 and 0.8);
\node at (-0.4,0) {$\Gamma_-$};
\draw (0, -0.8) -- (3, -0.8);

\draw (3, 0.8) arc (90:270:-0.16 and 0.8);
\draw [dashed] (3,0) ellipse (0.16 and 0.8);
\node at (3.5,0) {$\Gamma_+$};
\draw (0,0.8) -- (3,0.8); 

\draw (1.5, 0.8) arc (90:270:-0.16 and 0.8);
\draw [dashed] (1.5,0) ellipse (0.16 and 0.8);
\fill [pattern=north east lines] (1.5,0) ellipse (0.16 and 0.8);
\node at (1.9,0) {$\Omega_x$};

\node at (1.5,1) {$\Gamma$}; 

\draw [->] (0.5,-1) -- (2.5,-1);
\node at (1.5, -1.2) {$y$};
\end{tikzpicture}\hspace{5em}
\begin{tikzpicture}[minimum size=0.01cm]
\draw[fill=gray!50] (0, 0.75) -- (1.6, 0.75) -- (1.6, 0.85) -- (0, 0.85) -- cycle;
\draw[fill=gray!50] (0, -0.75) -- (1.6, -0.75) -- (1.6, -0.85) -- (0, -0.85) -- cycle;
\draw (0, -0.8) -- (0, 0.8);
\draw (0, -0.8) -- (1.6, -0.8);
\draw (1.6, -0.8) -- (1.6, 0.8);
\draw (1.6, 0.8) -- (0, 0.8);
\node at (-0.3, 0) {$\Gamma_-$};
\node at (1.9, 0) {$\Gamma_+$};
\node at (0.8, 1.05) {$\Gamma$};
\draw [->] (0.15,-1) -- (1.45,-1);
\node at (0.8, -1.2) {$y$};
\draw (0.8, 0.8) -- (0.8, -0.8);
\node at (1.1, 0) {$\Omega_x$};
\end{tikzpicture}
\caption{Two examples of compact product cylinders. The support of the damping functions $a(p)$ is shaded grey. The cross-sections $\Omega_x$ need to be star-shaped.} 
\label{f1}
\end{figure}
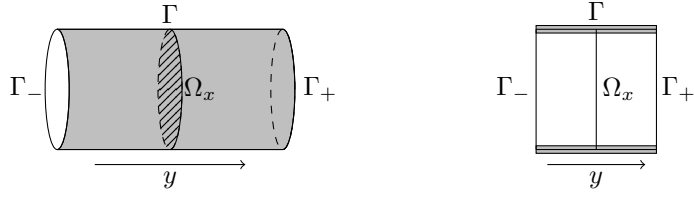
Let $\Delta\ge 0$ be the positive Laplacian on $\Omega$. Consider the damped wave equation with the impedance boundary condition on $\Gamma$ and the Neumann boundary condition on $\Gamma_\pm$:
\begin{gather}\label{1l1}
\left(\pd_t^2+\Delta\right)u(t,p)=0, \ (t, p)\in \setr_t\times \Omega,\\
\left(\pd_n +a(p)\pd_t\gamma+b(p)\gamma\right)u(t,p)=0, (t, p)\in \setr_t\times \Gamma,\\
\pd_n u(t,p)=0, (t, p)\in \setr_t\times \Gamma_\pm,\\
u(0, p)=u_0(p)\in H^2(\Omega), \ \pd_t u(0, p)=u_0(p)\in H^1(\Omega),
\end{gather}
where $\gamma:H^1(\Omega)\rightarrow H^{1/2}(\pd\Omega)$ is the trace operator and $\pd_n$ is the conormal derivative with respect to the outward-pointing normal to $\pd\Omega$ defined almost everywhere. We drop $\gamma$ as long as there is no confusion. Define the energy of the solution $u$ to the system \eqref{1l1} by
\begin{equation}
E(u,t)=\frac12 \int_\Omega \abs{\nabla u}^2+\abs{\pd_t u}^2\ dxdy.
\end{equation}
We want to understand at which rates $f(t)$ is the damped wave equation stable, that is,
\begin{equation}
E(u,t)^{\frac12}\le Cf(t)(\|u_0\|_{H^2(\Omega)}+\|u_1\|_{H^1(\Omega)})
\end{equation}
for some $C, f$ independent of $u_0$ and $u_1$. 

The damping assumption \ref{0ad} requires damping to fully vanish on $\Gamma_\pm$ and thus violates the Geometric Control Condition formulated in \cite{blr92}. This is the dynamical control condition requiring that every generalised bicharacteristic hits the boundary in finite time at a non-diffractive point where the damping is not trivial. It is known in \cite{blr92} that on manifolds with smooth boundary, the Geometric Control Condition is equivalent to the exponential energy decay, and thus we can not expect such rates in our case. On another hand, as long as the damping is not trivial somewhere on the smooth boundary, Lebeau and Robbiano showed in \cite{lr97} that there is logarithmic decay using the Carleman estimates. 

We are then interested in obtaining intermediate polynomial energy decay in cylindrical domains. Phung showed in \cite{phu08} that continuous damping on the boundary of 3D cylindrical waveguides with cross-sections of $C^2$-boundary gives a $t^{-\delta}$-polynomial decay for some $\delta>0$ without specifying any upper bound for $\delta$. Nishiyama proved in \cite{nis13} that $C^{1,1}$-damping gives a $t^{-1/3}$-energy decay on 2D $C^{1,1}$-rectangular domains with star-shaped cross-sections. We aim to sharpen those rates, and weaken the regularity assumption to $C^{0,1/2+\delta}$-damping.
\begin{theorem}\label{0t1}
Let $u$ be the unique solution to the damped wave equation \eqref{1l1} on a compact product cylinder $\Omega$ defined in Definition \ref{0gp}, impose the damping assumption \ref{0ad} and assume $a(p), b(p)\in C^{0,1/2+\delta}(\Gamma)$ for some $\delta>0$. Then there is $C>0$ such that
\begin{equation}
E(u,t)^{\frac12}\le Ct^{-\frac12}\left(\|u_0\|_{H^2(\Omega)}+\|u_1\|_{H^1(\Omega)}\right),
\end{equation}
where $C$ is independent of the initial data $u_0$ and $u_1$. In other words, the damped wave equation \eqref{1l1} is stable at the rate $t^{-1/2}$.  
\end{theorem}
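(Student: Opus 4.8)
### Proof Strategy

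The plan is to reduce the time-decay statement to a resolvent estimate on the imaginary axis via the Batty–Duyckaerts / Borichev–Tomilov theory for polynomially stable semigroups. First I would recast the damped wave system \eqref{1l1} as a first-order evolution equation $\partial_t U = \mathcal{A}U$ on the energy space $\mathcal{H} = H^1(\Omega) \times L^2(\Omega)$ (modulo constants, or with a suitable inner product accounting for the $b(p)\gamma$ term), where $\mathcal{A}$ generates a contraction semigroup by the Lummer–Phillips theorem; the impedance condition on $\Gamma$ together with the sign condition $a_0 > 0$ supplies the dissipativity, and \eqref{1l4} controls the lower-order boundary term. Standard arguments (Lebeau–Robbiano type unique continuation, or the fact that the undamped problem has no real eigenvalues with Neumann/impedance data on a connected domain) show $\mathcal{A}$ has no spectrum on $i\setr$, so one is in the setting where the Borichev–Tomilov theorem applies: the energy decays at rate $t^{-1/2}$ for data in $\dom(\mathcal{A}^2) \supset H^2 \times H^1$ if and only if $\|(\mathcal{A} - i\lambda)^{-1}\|_{\mathcal{H}\to\mathcal{H}} = O(|\lambda|^{2})$ as $|\lambda| \to \infty$, i.e. a quadratic resolvent bound.

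The heart of the matter is therefore the resolvent estimate, which after eliminating the first component becomes a statement about the \emph{interior impedance problem}: given $f \in L^2(\Omega)$, solve $(\Delta - \lambda^2) v = f$ in $\Omega$ with $(\partial_n + i\lambda a(p) + b(p))v = 0$ on $\Gamma$ and $\partial_n v = 0$ on $\Gamma_\pm$, and show $\|v\|_{L^2} \lesssim |\lambda| \|f\|_{L^2}$ together with the matching gradient bound. Here I would use the product structure decisively: expand $v$ and $f$ in the Neumann eigenbasis $\{\cos(k\pi(y+1)/2)\}_{k\ge 0}$ of $[-1,1]_y$, so that each Fourier mode $v_k(x)$ satisfies a Helmholtz equation $(\Delta_x - \lambda^2 + (k\pi/2)^2) v_k = f_k$ on $\Omega_x$ with the impedance condition $(\partial_{n_x} + i\lambda a + b)v_k = 0$ on $\partial\Omega_x$ inherited from $\Gamma$. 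The non-trivial modes $k \ge 1$ are \emph{non-resonant} — the spectral parameter $\mu_k^2 = \lambda^2 - (k\pi/2)^2$ stays bounded away from the real axis relative to $\lambda$ when $|\lambda|$ is large and $k$ is large, or can be controlled by absorbing into the damping — and should give a uniformly bounded (in fact $O(1/|\lambda|)$) contribution via a Rellich/Morawetz multiplier identity using the star-shapedness of $\Omega_x$ (the $c_0 > 0$ lower bound on $x\cdot n_x$). The $k = 0$ mode is the genuinely one-dimensional obstruction where the Geometric Control Condition fails: there $v_0$ solves a Helmholtz problem on $\Omega_x$ at frequency $\lambda$ with damping only through the boundary, and one expects exactly the $O(|\lambda|)$ loss, which is what produces the sharp $t^{-1/2}$ rate rather than something faster.

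For the Rellich-multiplier step I would test the equation against $x\cdot\nabla_x v_k$ (plus a multiple of $v_k$ to balance dimensions), integrate by parts over $\Omega_x$, and use the $C^{1,1}$-regularity of $\partial\Omega_x$ to make sense of the boundary terms — this is where the Hölder hypothesis $a, b \in C^{0,1/2+\delta}$ enters, since the impedance boundary condition couples $\partial_{n_x} v_k$ to $\lambda a v_k|_{\partial\Omega_x}$ and one needs a gain of at least $H^{1/2}$-type smoothing on the boundary trace to close the estimate; the fractional regularity of $a$ is precisely tuned so that $a \cdot (\text{trace of } v_k)$ lands in $H^{1/2}(\partial\Omega_x)$ and the commutator terms are harmless. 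The main obstacle, I expect, is uniformity: one must control the full sum $\sum_k \|v_k\|^2$ with a constant independent of $\lambda$, which requires that the per-mode estimates degrade gracefully in $k$ and that the transition region $k\pi/2 \approx |\lambda|$ (where $\mu_k$ is small and the Helmholtz problem is near-resonant) is handled — likely by a separate elliptic estimate exploiting that $\mu_k$ small forces $v_k$ to be essentially harmonic, for which the impedance condition plus star-shapedness gives coercivity. Assembling the $k=0$ contribution $O(|\lambda|)$, the $k\ge 1$ contributions $O(1/|\lambda|)$, and the finitely many transition modes, one obtains the quadratic resolvent bound $O(|\lambda|^2)$ (the extra power coming from converting the $v$-estimate to the $U$-estimate on $\mathcal{H}$), and Borichev–Tomilov then yields Theorem \ref{0t1}. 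Sharpness of the rate would follow by exhibiting quasimodes concentrated on the $k=0$ mode, i.e. a sequence saturating the $O(|\lambda|)$ loss.
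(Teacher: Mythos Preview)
Your overall framework---semigroup setup, emptiness of $i\mathbb{R}\cap\operatorname{Spec}(A)$, reduction via Borichev--Tomilov to a quadratic resolvent bound, and a Rellich multiplier $x\cdot\nabla_x$ exploiting star-shapedness---matches the paper. But the core of your resolvent argument has a genuine gap: the Fourier decomposition in $y$ does \emph{not} decouple the boundary condition. Since $a(p), b(p)$ are allowed to depend on $y\in[-1,1]$, the impedance condition $(\partial_n+i\lambda a+b)v|_\Gamma=0$ couples all Fourier modes $v_k$ through a convolution in $k$, so each $v_k$ does not satisfy a clean impedance problem on $\Omega_x$ as you assert. The paper fixes this with a monotonicity step you are missing: it replaces $a,b$ by smoother (in fact constant) minorants $\underline a=a_0$, $\underline b=0$, absorbing $(a-\underline a)\lambda u$ and $(b-\underline b)u$ into the boundary datum; the cost is a half-order loss on the boundary that turns out not to affect the final rate. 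Only after this replacement can one exploit the product structure cleanly.

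You also have the mechanism backwards. The $k=0$ mode is the \emph{best} case: $v_0$ solves a full-frequency Helmholtz problem on $\Omega_x$ with impedance on all of $\partial\Omega_x$, which is nontrapping by star-shapedness. The loss lives in the transition modes $k\pi/2\approx|\lambda|$, where the cross-sectional frequency $\mu_k$ is small---these are exactly the modes microlocalised on the trapped $y$-directed geodesics. The paper never sums over modes; instead, after the monotonicity replacement, it sets $w=h\partial_y u$ (tangential to $\Gamma$ on a product cylinder, so $w$ satisfies another impedance problem with boundary datum involving $h(\partial_y\underline a)u$, $h(\partial_y\underline b)u$), applies the basic $L^2$ resolvent estimate to $w$, and feeds the resulting control of $\|h\partial_y u\|$ back into the multiplier identity to upgrade to the sharp $H^1$ estimate $\|u\|\lesssim\|f\|_{H^1}$. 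Your per-mode scheme could perhaps be salvaged after the monotonicity step, but the uniform treatment of the transition modes you defer as ``likely by a separate elliptic estimate'' is precisely where the $|\lambda|^2$ loss resides and would require a real argument, not a gesture.
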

\begin{remark}
\begin{enumerate}[wide]
\item The rate $t^{-1/2}$ is sharp: see Proposition \ref{3t5}. 

\item One could replace the Neumann boundary conditions on $\Gamma_\pm$ by the Dirichlet boundary condition or the periodic boundary condition
\begin{gather}\label{1l18}
u(t,p)=0, \ (t, p)\in \setr_t\times \Gamma_\pm; \textnormal{or}\\
\label{1l17}
u(t,x,-1)=u(t,x,1), \ \pd_y u(t,x,-1)=\pd_y u(t,x,1), \ (t, x)\in \setr_t\times \Omega_x,
\end{gather}
and the same result holds. See remarks \ref{0t7}, \ref{2t7}, \ref{2t8}, \ref{3t6}, \ref{3t4}(3) for details. 

\item The Hölder regularity of $a,b$ is needed to guarantee that the solutions to the damped wave equation are in $H^2(\mathbb{R}_{t\ge 0}\times \Omega)$. We use a monotonicity argument to improve the regularity of $a, b$ by one order without tarnishing the optimality of the polynomial decay. This is surprising, since in \cite{aln14,dk20} we know that sharp polynomial decay depends on the Hölder regularity of interior damping. See Remark \ref{3t4}(1).

\item The same result is expected to hold if one replaces $[-1,1]_y$ by any compact $C^\infty$-manifolds, either without boundary, or with the homogeneous Dirichlet or Neumann boundary conditions on the $C^\infty$-boundary. To keep the paper concise, we chose not to develop those issues here, but the proof should be similar. 
\end{enumerate}
\end{remark}

\begin{figure}
\begin{tikzpicture}[minimum size=0.01cm]

\draw (0,0) ellipse (0.16 and 0.8);
\draw (0, -0.8) -- (3, -0.8);

\draw (3, 0.8) arc (90:270:-0.16 and 0.8);
\draw [dashed] (3,0) ellipse (0.16 and 0.8);

\draw (0,0.8) -- (3,0.8); 

\draw [->](1, 0) node [circle, minimum size=0.5mm, inner sep=0, draw=black, fill=black] {} -- (3, 0);

\draw [dashed, ->](1, 0) node [circle, minimum size=0.5mm, inner sep=0, draw=black, fill=black] {} -- (3, 0.3);

\draw [dashed, ->](1, 0) node [circle, minimum size=0.5mm, inner sep=0, draw=black, fill=black] {} -- (3, 0.6);

\draw [dashed, ->](1, 0) node [circle, minimum size=0.5mm, inner sep=0, draw=black, fill=black] {} -- (3, -0.3);

\draw [dashed, ->](1, 0) node [circle, minimum size=0.5mm, inner sep=0, draw=black, fill=black] {} -- (3, -0.6);

\node at (1.5,1) {$\Gamma$}; 
\end{tikzpicture}\hspace{5em}
\begin{tikzpicture}[minimum size=0.01cm]

\draw (0,0) ellipse (0.16 and 0.8);
\draw (0, -0.8) -- (3, -0.8);

\draw (3, 0.8) arc (90:270:-0.16 and 0.8);
\draw [dashed] (3,0) ellipse (0.16 and 0.8);

\draw (0,0.8) -- (3,0.8); 

\draw [->](1, 0) node [circle, minimum size=0.5mm, inner sep=0, draw=black, fill=black] {} -- (3, 0);

\draw [dashed](1, 0) node [circle, minimum size=0.5mm, inner sep=0, draw=black, fill=black] {} -- (3, 0.24);

\draw [dashed](3, 0.24) -- (0, 0.6);

\draw [dashed, ->](0, 0.6) -- (5/3, 0.8) node [circle, minimum size=0.5mm, inner sep=0, draw=black, fill=black] {};

\node at (1.5,1) {$\Gamma$}; 
\end{tikzpicture}
\caption{Non-concentration along geodesics of momentum purely in $y$. In the figure on the left, the high frequency waves fail to concentrate exponentially near the solid geodesic of momentum purely in $y$ which does not hit the dissipated boundary. In the figure on the right, those dashed geodesics will eventually hit dissipative boundary $\Gamma$, but are damped very weakly due to small angles of incidence.} 
\label{f2}
\end{figure}
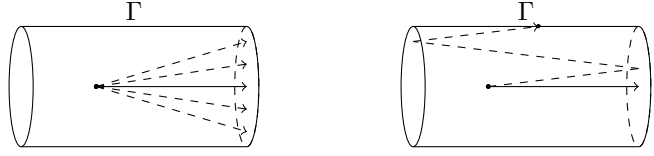

Heuristically the polynomial decay is due to the failure of the high frequency waves to concentrate exponentially along the geodesics that do not see the damping. In our geometry, the only geodesics that do not see the damping are those of momentum purely in $y$. Those geodesics are neutrally stable and hence delocalise spatially on $\Omega_x$: this explains the failure of such exponential concentration, similar to the case of interior damping. See Figure \ref{f2}. 

We remark the optimal $t^{-1/2}$-decay here is worse than that in the case of interior damping, for example, the $t^{-1+\delta}$-decay observed in \cite{aln14}. This is because those delocalised waves are damped more weakly by the boundary damping than the interior damping: the smaller the angle of incidence is when the geodesic hits the dissipative boundary, the weaker the dissipation of the waves concentrating near the geodesic is. When we perturb the neutrally trapped geodesic of momentum purely in $y$, a smaller perturbation leads to not only longer time for the perturbed geodesic to reach the damping, but also a smaller angle of incidence at the boundary, implying a weaker dissipation. This adverse phenomenon is absent in the case of interior damping and leads to the slower energy decay here. 

\subsubsection{Infinite Cylinders}We could also consider cylinders that are infinitely long along the cylindrical direction $y$. An infinite product cylinder has the boundary
\begin{equation}\label{1l19}
\Gamma=\pd\Omega=\pd\Omega_x\times\mathbb{R}_y.
\end{equation}

\begin{figure}
\begin{tikzpicture}[minimum size=0.01cm]
\draw[fill=gray!50] (0, 0.8) arc (90:270:-0.16 and 0.8) -- (6, -0.8) -- (6, -0.8) arc (270:90:-0.16 and 0.8) -- (6,0.8) -- cycle;
\draw (0, 0.8) arc (90:270:-0.16 and 0.8);
\draw [dashed] (0,0) ellipse (0.16 and 0.8);
\draw (0, -0.8) -- (6, -0.8);

\draw (6, 0.8) arc (90:270:-0.16 and 0.8);
\draw [dashed] (6,0) ellipse (0.16 and 0.8);

\draw (0,0.8) -- (6,0.8); 

\node at (3,1) {$\Gamma$}; 

\draw (3, 0.8) arc (90:270:-0.16 and 0.8);
\draw [dashed] (3,0) ellipse (0.16 and 0.8);
\fill [pattern=north east lines] (3,0) ellipse (0.16 and 0.8);
\node at (3.4,0) {$\Omega_x$};

\draw [dashed] (0, -0.8) -- (-0.5, -0.8);
\draw [dashed] (0, 0.8) -- (-0.5, 0.8);
\draw [dashed] (6, -0.8) -- (6.5, -0.8);
\draw [dashed] (6, 0.8) -- (6.5, 0.8);
\end{tikzpicture}
\caption{An example of an infinite product cylinder. It extends indefinitely to both ends.} 
\label{f3}
\end{figure}
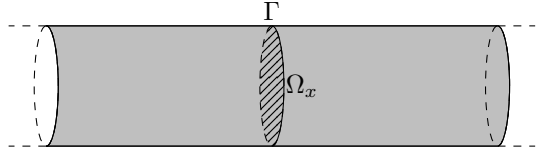
Consider the wave equation with damping from the boundary:
\begin{gather}\label{1l10}
\left(\pd_t^2+\Delta\right)u(t,p)=0, \ (t, p)\in \setr_t\times \Omega,\\
\pd_n u(t,p) +a(p)\pd_t\gamma u(t,p)+b(p)\gamma u(t,p)=0, (t, p)\in \setr_t\times \Gamma,\\
\label{1l11}
u(0, p)=u_0(p)\in H^2(\Omega), \ \pd_t u(0, p)=u_1(p)\in H^1(\Omega).
\end{gather}
We can show there is sharp polynomial decay under our damping assumption \ref{0ad}:
\begin{theorem}\label{0t4}
Let $u$ be the unique solution to the damped wave equation \eqref{1l10} on an infinite product cylinder $\Omega$ defined in Definition \ref{0gp}, impose the damping assumption \ref{0ad} and assume $a(p), b(p)\in C^{0,1/2+\delta}(\Gamma)$ for some $\delta>0$, and $b(p)\ge b_0>0$ on $\Gamma$. Then there is $C>0$ such that
\begin{equation}
E(u,t)^{\frac12}\le Ct^{-\frac12}\left(\|u_0\|_{H^2(\Omega)}+\|u_1\|_{H^1(\Omega)}\right),
\end{equation}
where $C$ is independent of the initial data $u_0$ and $u_1$.
\end{theorem}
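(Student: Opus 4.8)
The plan is to derive the $t^{-1/2}$ decay asserted in Theorem \ref{0t4} from a resolvent bound on the imaginary axis, in the spirit of the Borichev--Tomilov characterisation of polynomial stability, exactly as for the compact case of Theorem \ref{0t1}. Write the system \eqref{1l10}--\eqref{1l11} as $\pd_t U = AU$ with $U=(u,\pd_t u)$ on the energy space $\mathcal{H}=H^1(\Omega)\times L^2(\Omega)$, normed by $\|(v_1,v_2)\|_{\mathcal{H}}^2=\int_\Omega|\nabla v_1|^2+\int_\Gamma b|v_1|^2+\int_\Omega|v_2|^2$. Since $b\ge b_0>0$, a cross-sectional Poincaré inequality on the bounded set $\Omega_x$, integrated in $y$, makes this equivalent to the usual norm even though $\Omega$ is non-compact; moreover $A$ is maximal dissipative with $\tfrac{d}{dt}\|U\|_{\mathcal{H}}^2=-2\int_\Gamma a|\pd_t u|^2\le 0$, and the same coercivity gives $0\in\rho(A)$. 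By elliptic regularity for the impedance problem on the $C^{1,1}$-boundary — the first place the Hölder hypothesis on $a,b$ enters, cf. the remark after Theorem \ref{0t1} — one has $\|U\|_{\dom A}\simeq\|u_0\|_{H^2(\Omega)}+\|u_1\|_{H^1(\Omega)}$, and $E(u,t)^{1/2}\le\|e^{tA}U\|_{\mathcal{H}}$. So it suffices to prove $i\setr\subset\rho(A)$ together with $\|(i\lambda-A)^{-1}\|_{\mathcal{L}(\mathcal{H})}=\bigo(\langle\lambda\rangle^2)$ as $|\lambda|\to\infty$, since these yield $\|e^{tA}A^{-1}\|_{\mathcal{L}(\mathcal{H})}=\bigo(t^{-1/2})$.

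Both halves follow from one uniform a priori estimate for the interior impedance problem. Solving $(i\lambda-A)(u_1,u_2)=(f_1,f_2)$ and eliminating $u_2=i\lambda u_1-f_1$ converts it into $(\Delta-\lambda^2)u_1=f_2+i\lambda f_1$ in $\Omega$ with $\pd_n u_1+(i\lambda a+b)u_1=af_1$ on $\Gamma=\pd\Omega$, and the claim is that there is $C$, independent of $\lambda\in\setr$ and of $u_1$ in the domain, with $\|u_1\|_{H^1(\Omega)}+|\lambda|\,\|u_1\|_{L^2(\Omega)}\le C\langle\lambda\rangle^2\|(f_1,f_2)\|_{\mathcal{H}}$. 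Granting this, $i\lambda-A$ is injective with closed range for every real $\lambda$; since $A^*$ is again a generator of damped-wave type (the same dissipative boundary term, the two components exchanged), a solution of $(i\lambda-A^*)w=0$ has $\int_\Gamma a|w_2|^2=0$, hence, because $a\ge a_0>0$ and $b\ge b_0>0$, has vanishing Cauchy data on all of $\Gamma=\pd\Omega$, hence vanishes identically by unique continuation from the $C^{1,1}$-boundary; so $i\lambda-A$ has dense range, hence is invertible, and the estimate is exactly the required resolvent bound (bounded near $\lambda=0$ because $0\in\rho(A)$).

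To prove the a priori estimate I would run the two-multiplier argument underlying the interior impedance estimates of \cite{phu08, nis13}. Pairing the equation with $\bar u_1$ and applying Green's formula — clean here because $\Gamma=\pd\Omega$ as in \eqref{1l19} and there are no caps — the imaginary part bounds the dissipation $|\lambda|\int_\Gamma a|u_1|^2$ by the data, while the real part couples $\|\nabla u_1\|_{L^2}^2$ with $\lambda^2\|u_1\|_{L^2}^2$. Pairing next with $\overline{(x\cdot\nabla_x+c)u_1}$ for a suitable constant $c$ gives a Morawetz--Rellich identity; because the multiplier acts only in the cross-sectional variables and $\Omega_x$ is star-shaped with $x\cdot n_x\ge c_0>0$, the boundary integral over $\Gamma$ has the sign needed to be moved across, and it leaves on $\Gamma$ only terms of the shape $\lambda^2\int_\Gamma(x\cdot n_x)|u_1|^2$, $\int_\Gamma(x\cdot n_x)|\pd_n u_1|^2$ and a cross term: the first two are controlled through the boundary condition $\pd_n u_1=-(i\lambda a+b)u_1+af_1$ and \eqref{1l4} by $\lambda^2\int_\Gamma a|u_1|^2$ plus the data (here $a\ge a_0>0$ is used to genuinely absorb $\pd_n u_1$), and the cross term closes by Cauchy--Schwarz since $(x\cdot n_x)^{-1}$ is bounded. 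The interior terms of the identity then deliver $\|\nabla u_1\|_{L^2}^2+\lambda^2\|u_1\|_{L^2}^2$. The powers of $|\lambda|$ enter only through the trace bound $\|u_1\|_{L^2(\Gamma)}\le C\|u_1\|_{H^1(\Omega)}$ and the factor $\langle\lambda\rangle$ in $\|f_2+i\lambda f_1\|_{L^2}$ — the latter partly recovered by writing $\lambda u_1=u_2+f_1$ — and tracking them produces the $\langle\lambda\rangle^2$ loss, which is what pins the rate at $t^{-1/2}$.

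The main obstacle is legitimising this argument under the two structural features of the theorem. First, $\Omega$ is non-compact, so the integrations by parts in $y$ and the vanishing of boundary contributions at $y\to\pm\infty$ must be justified; this is handled by density, $U\in\dom A$ supplying the needed decay, and non-compactness is otherwise harmless because the multiplier $x\cdot\nabla_x$ is bounded ($\Omega_x$ is bounded). Second, $a,b$ are only $C^{0,1/2+\delta}(\Gamma)$, whereas the boundary term controlling the tangential derivatives of $u_1$ on $\Gamma$ appears after an integration by parts along $\Gamma$ that formally involves $\nabla_\Gamma a$ and $\nabla_\Gamma b$; this is exactly where the monotonicity/approximation device announced in the introduction (cf. the remark after Theorem \ref{0t1} and the contrast with the interior-damping results of \cite{aln14, dk20}) is used — approximate $a,b$ from above and below by smooth functions agreeing with them up to $\bigo(\ve^{1/2+\delta})$, prove the estimate for the regularised data, and pass to the limit — upgrading the usable regularity of the damping by one order without degrading the power of $|\lambda|$. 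Finally, the hypothesis $b\ge b_0>0$, absent in the compact case, is precisely what excludes spectrum of $A$ on $i\setr$ coming from the ends of the infinite cylinder: it forces the vanishing-Cauchy-data conclusion above, hence trivial kernels for $i\lambda-A$ and $i\lambda-A^*$, and it underlies $0\in\rho(A)$, which is what makes the decay estimate through $e^{tA}A^{-1}$ meaningful.
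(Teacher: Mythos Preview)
Your semigroup set-up and the reduction to a resolvent bound via Borichev--Tomilov match the paper. The gap is in the power of $\lambda$ you claim the two-multiplier argument yields. The Morawetz pairing with $x\cdot\nabla_x$ does \emph{not} deliver $\|\nabla u_1\|^2+\lambda^2\|u_1\|^2$ outright: because the multiplier acts only in the cross-sectional variables, the identity leaves an uncontrolled $\|\pd_y u_1\|^2$ on the right (cf.\ \eqref{2l5} and \eqref{2l13} in the paper). One closes by a Poincar\'e inequality in $x$ alone (Lemma \ref{2t2}), but the loss incurred is $h^{-6}$ for $\|u\|^2$, i.e.\ $\|(A-i\lambda)^{-1}\|=\bigo(\lambda^3)$, which gives only the $t^{-1/3}$ rate of Theorem \ref{0t5}. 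Your tracking of powers is too optimistic precisely at this step.

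The missing idea, specific to the \emph{product} cylinder, is to exploit that $\pd_y$ is tangential to $\Gamma$ and commutes with $\pd_n$: then $w=h\pd_y u$ satisfies another interior impedance problem with data $h\pd_y f$ and boundary term $h\pd_y g \mp h(\pd_y a)u \mp ih^2(\pd_y b)u$. Applying the $L^2$-estimate to $w$ directly controls the bad $\|h\pd_y u\|^2$ and upgrades the resolvent bound to $\lambda^2$ (this is Proposition \ref{2t3} and \eqref{2l32}). This step needs $\pd_y a,\pd_y b\in C^{0,1/2+\delta}$, which is \emph{not} assumed; that is exactly what the monotonicity argument buys: it replaces $a,b$ not by smooth approximants close in norm, but by \emph{smaller} functions $\underline{a},\underline{b}$ (e.g.\ constants $a_0,b_0$) whose $y$-derivatives are trivially regular. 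The leftover $(a-\underline{a})\lambda u$ on $\Gamma$ is harmless because one already has $\|u\|_{\Gamma}=o(\lambda^{-2})$ from the dissipation relation. Your description of the monotonicity device as an $\epsilon$-approximation misses this point and would not supply the extra $y$-regularity you need.
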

\begin{remark}
\begin{enumerate}[wide]
\item This rate is also shown to be sharp in Proposition \ref{4t2}.
\item It is well known that waves on non-compact manifolds could have quasimodes concentrating at the zero frequency, and those quasimodes behave like solutions to the heat equation, which would require further analysis. See \cite{jr18}. We note that the further assumption that $b\ge b_0>0$ annihilates those quasimodes. 
\end{enumerate} 
\end{remark}

\subsubsection{Non-Product Cylinders}
We could consider more general geometric settings which are not product manifolds. 
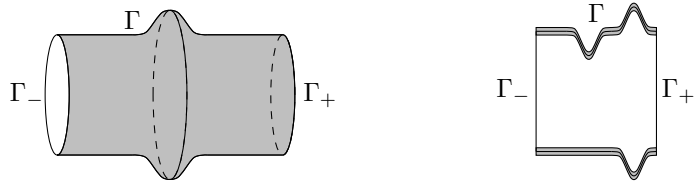
\begin{figure}
\begin{tikzpicture}[minimum size=0.01cm]
\draw[fill=gray!50] (0, 0.8) arc (90:270:-0.16 and 0.8) -- (1, -0.8) -- plot [smooth] coordinates {(1,-0.8) (1.1, -0.805) (1.2,-0.85) (1.4, -1.1) (1.6, -1.1) (1.8, -0.85) (1.9, -0.805) (2, -0.8)} -- (2, -0.8) -- (3, -0.8) arc (270:90:-0.16 and 0.8) -- (3,0.8) -- (2,0.8) -- plot [smooth] coordinates {(2, 0.8) (1.9, 0.805)(1.8, 0.85) (1.6, 1.1) (1.4, 1.1) (1.2,0.85) (1.1, 0.805) (1,0.8)} -- (1, 0.8) -- (0, 0.8) -- cycle;
\draw (0,0) ellipse (0.16 and 0.8);
\node at (-0.4,0) {$\Gamma_-$};
\draw (0, -0.8) -- (1, -0.8);
\draw (2, -0.8) -- (3, -0.8);

\draw (3, 0.8) arc (90:270:-0.16 and 0.8);
\draw [dashed] (3,0) ellipse (0.16 and 0.8);

\node at (3.5,0) {$\Gamma_+$};

\draw (0,0.8) -- (1,0.8); 
\draw (2,0.8) -- (3,0.8); 

\node at (1,1) {$\Gamma$}; 
\draw plot [smooth] coordinates {(1,0.8) (1.1, 0.805) (1.2,0.85) (1.4, 1.1) (1.6, 1.1) (1.8, 0.85) (1.9, 0.805) (2, 0.8)};
\draw plot [smooth] coordinates {(1,-0.8) (1.1, -0.805) (1.2,-0.85) (1.4, -1.1) (1.6, -1.1) (1.8, -0.85) (1.9, -0.805) (2, -0.8)};

\draw (1.5, 1.125) arc (90:270:-0.225 and 1.125);
\draw [dashed] (1.5, 0) ellipse (-0.225 and 1.125);
\end{tikzpicture}\hspace{5em}
\begin{tikzpicture}[minimum size=0.01cm]
\draw[fill=gray!50] (0, 0.85) -- (0.4, 0.85) -- plot [smooth] coordinates {(0.4,0.85) (0.5, 0.84) (0.55, 0.78) (0.66, 0.55) (0.74, 0.55) (0.85, 0.78) (0.9, 0.84) (1, 0.85)}  -- plot [smooth] coordinates {(1.0,0.85) (1.1, 0.86) (1.15, 0.92) (1.26, 1.15) (1.34, 1.15) (1.45, 0.92) (1.5, 0.86) (1.6, 0.85)} -- (1.6, 0.85) -- (1.6, 0.75) -- plot [smooth] coordinates {(1.6, 0.75) (1.5, 0.76) (1.45, 0.82) (1.34, 1.05) (1.26, 1.05) (1.15, 0.82) (1.1, 0.76) (1.0,0.75)} -- plot [smooth] coordinates {(1, 0.75) (0.9, 0.74) (0.85, 0.68) (0.74, 0.45) (0.66, 0.45) (0.55, 0.68) (0.5, 0.74) (0.4,0.75)} -- (0, 0.75) -- cycle;

\draw[fill=gray!50] (0, -0.75) -- (1, -0.75) -- plot [smooth] coordinates {(1.0,-0.75) (1.1, -0.76) (1.15, -0.82) (1.26, -1.05) (1.34, -1.05) (1.45, -0.82) (1.5, -0.76) (1.6, -0.75)} -- (1.6, -0.75) -- (1.6, -0.85) --  plot [smooth] coordinates {(1.6, -0.85) (1.5, -0.86) (1.45, -0.92) (1.34, -1.15) (1.26, -1.15) (1.15, -0.92) (1.1, -0.86) (1.0,-0.85)} -- (0, -0.85) -- cycle;
\draw (0, -0.8) -- (0, 0.8);
\draw (0, -0.8) -- (1, -0.8);
\draw (1.6, -0.8) -- (1.6, 0.8);
\draw (0.4, 0.8) -- (0, 0.8);
\node at (-0.3, 0) {$\Gamma_-$};
\node at (1.9, 0) {$\Gamma_+$};
\node at (0.8, 1.05) {$\Gamma$};

\draw plot [smooth] coordinates {(1.0,0.8) (1.1, 0.81) (1.15, 0.87) (1.26, 1.1) (1.34, 1.1) (1.45, 0.87) (1.5, 0.81) (1.6, 0.8)};
\draw plot [smooth] coordinates {(0.4,0.8) (0.5, 0.79) (0.55, 0.73) (0.66, 0.5) (0.74, 0.5) (0.85, 0.73) (0.9, 0.79) (1, 0.8)};
\draw plot [smooth] coordinates {(1.0,-0.8) (1.1, -0.81) (1.15, -0.87) (1.26, -1.1) (1.34, -1.1) (1.45, -0.87) (1.5, -0.81) (1.6, -0.8)};
\end{tikzpicture}
\caption{Two examples of compact non-product cylinders. }
\label{f4} 
\end{figure}
\begin{definition}[Compact Non-Product Cylinders] \label{0gcn}
A compact non-product cylinder $\Omega$ with star-shaped cross-sections is a bounded $C^{0,1}$-domain $\Omega\subset\mathbb{B}_x^{d}(0, c_1)\times[-1,1]_y$ for some radius $c_1>0$. We assume that $\pd\Omega\cap (-1,1)_y$ is $C^{1,1}$, and the extension of $\Omega$ by reflection past $y=\pm1$ has a boundary that is $C^{1,1}$ near $y=\pm 1$. Assume further that all its cross-sections $\Omega(y)=\{x: (x,y)\in\Omega\}$ parametrised by $y\in[-1,1]$ are non-empty and star-shaped, that is, there exists $c_0>0$ independent of $y$ with $x\cdot n_x\ge c_0>0$ almost everywhere on the boundary of the cross-sections $\pd\Omega(y)=\pd(\Omega(y))$, where $n_x$ is the outward-pointing normal defined almost everywhere. See Figure \ref{f4} for examples. 
\end{definition}
Note that a compact product cylinder is a special example of a compact non-product cylinder. We will then have the boundary decomposition $\pd\Omega=\Gamma\cup(\Gamma_-\sqcup\Gamma_+)$ where
\begin{equation}\label{1l2}
\Gamma=\overline{\{(x,y):x\in\pd\Omega(y), y\in(0,1)\}}, \ \Gamma_\pm=\Omega(\pm 1).
\end{equation}
\begin{theorem}\label{0t6}
Let $u$ be the unique solution to the damped wave equation \eqref{1l1} on a compact non-product cylinder $\Omega$ defined in Definition \ref{0gcn}, impose the damping assumption \ref{0ad} and assume $a(p), b(p)\in C^{0,1/2+\delta}(\Gamma)$ for some $\delta>0$. Then there is $C>0$ such that
\begin{equation}
E(u,t)^{\frac12}\le Ct^{-\frac13}\left(\|u_0\|_{H^2(\Omega)}+\|u_1\|_{H^1(\Omega)}\right),
\end{equation}
where $C$ is independent of the initial data $u_0$ and $u_1$.
\end{theorem}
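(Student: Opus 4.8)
The strategy is the standard semigroup approach: by the Batty--Duyckaerts / Borichev--Tomilov characterization, the decay rate $t^{-1/3}$ for data in the domain of the generator is equivalent to a resolvent bound $\|(A-i\lambda)^{-1}\|_{\mathcal{L}(\mathcal{H})} = \mathcal{O}(|\lambda|^{3})$ as $|\lambda|\to\infty$, where $A$ generates the damped-wave semigroup on the finite-energy space $\mathcal{H}=H^1(\Omega)\times L^2(\Omega)$ (with the impedance condition on $\Gamma$ and Neumann on $\Gamma_\pm$ encoded in the domain). Equivalently, after a reduction that should already be in place in the earlier sections, this comes down to a high-frequency estimate for the stationary \emph{interior impedance problem}: if $(\Delta - \lambda^2)v = f$ in $\Omega$, $(\partial_n + i\lambda a + b)v = 0$ on $\Gamma$, $\partial_n v = 0$ on $\Gamma_\pm$, then
\begin{equation*}
\|v\|_{H^1(\Omega)} \le C|\lambda|^{2}\|f\|_{L^2(\Omega)} + (\text{lower order}),
\end{equation*}
for $|\lambda|$ large; translating to the semigroup resolvent on $\mathcal H$ gives the $\mathcal O(|\lambda|^3)$ bound. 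So the whole game is to prove this resolvent estimate on a non-product cylinder.

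The mechanism behind the loss (compared to the $t^{-1/2}$ product case) is that, on a non-product cylinder, the obstruction comes from quasimodes concentrating near geodesics of momentum almost purely in $y$, but the cross-section $\Omega(y)$ now varies with $y$, so such rays are no longer exactly trapped — they drift and eventually reach $\Gamma$, but with a small angle of incidence, hence weak damping. The key steps I would carry out: (1) A \emph{multiplier / Morawetz-Rellich identity} using the vector field $x\cdot\nabla_x$ (this is exactly where star-shapedness of every cross-section $\Omega(y)$ enters, via $x\cdot n_x \ge c_0 > 0$): pairing $(\Delta-\lambda^2)v = f$ against $\overline{(x\cdot\nabla_x)v}$, integrating by parts, and using the boundary conditions to control the boundary terms on $\Gamma$ by $|\lambda|^2$ times the damped energy $\int_\Gamma a|\gamma v|^2$, which in turn is controlled (via the impedance condition and the equation) by $\|f\|_{L^2}\|v\|$. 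Star-shapedness guarantees the bulk term $\int_\Omega |\partial_y v|^2$ (the "dangerous" part of the gradient, the one not seen by the cross-sectional damping) picks up a \emph{good sign} up to controllable errors. (2) A separate, easier estimate controlling the transverse gradient $\int_\Omega |\nabla_x v|^2$ and $\int_\Omega |\lambda|^2 |v|^2$ directly from the equation tested against $\bar v$, again using that $a \ge a_0 > 0$ on $\Gamma$. (3) Combining (1) and (2) and carefully tracking powers of $\lambda$: on the product cylinder this yields $\|v\|_{H^1} \lesssim |\lambda| \|f\|$, i.e.\ $t^{-1/2}$; on the non-product cylinder the $y$-derivatives of the metric / the non-product terms in the commutator $[\Delta, x\cdot\nabla_x]$ produce an extra error of the form $\epsilon^{-1}|\lambda|\|v\|_{H^1}$ balanced against $\epsilon|\lambda|^2$-type gains, and optimizing $\epsilon \sim |\lambda|^{-1}$ (or rather, rescaling the $y$-frequency localization appropriately) costs exactly one more power of $|\lambda|$, giving $\|v\|_{H^1}\lesssim |\lambda|^2\|f\|$ and hence $t^{-1/3}$.

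To handle the low regularity of $a,b$ ($C^{0,1/2+\delta}$) and the mere $C^{1,1}$-regularity of $\Gamma$ away from $y=\pm1$, I would not use pseudodifferential microlocalization directly on $\Gamma$; instead, as flagged in Remark following Theorem \ref{0t1}, I would first run a monotonicity/regularization argument — replace $a,b$ by slightly smoothed, still admissible damping and track that the resolvent bound is stable under this replacement, using the sign condition $b \le (b_1/a_1)a$ to keep the zeroth-order impedance term subordinate to the first-order one. The reflection hypothesis across $y=\pm1$ in Definition \ref{0gcn} lets me double the domain so that the Neumann caps disappear and $\Gamma$ becomes genuinely $C^{1,1}$, reducing to an interior-impedance estimate on a closed-up cylinder; elliptic regularity for $C^{1,1}$ domains then puts $v \in H^2$ so all the integrations by parts in step (1) are justified.

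The main obstacle I expect is step (1)'s commutator bookkeeping in the non-product case: on a product cylinder, $x\cdot\nabla_x$ commutes cleanly with $\partial_y^2$, so the Rellich identity decouples the $y$-behavior, but when $\Omega(y)$ varies the boundary $\Gamma$ is not invariant under the flow of $x\cdot\nabla_x$, so the boundary integral on $\Gamma$ acquires a term involving $n_y$ (the $y$-component of the normal on $\Gamma$) paired with $\partial_y v$ and $x\cdot\nabla_x v$ — this is precisely the "small angle of incidence" term and it cannot be absorbed for free. Controlling it requires a trace estimate on $\Gamma$ for $\partial_y v$ with an honest accounting of its $|\lambda|$-dependence, and it is this term that is responsible for the degradation from $|\lambda|$ to $|\lambda|^2$; getting the power exactly right (not worse) is the crux. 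A secondary difficulty is that $\Omega$ is only Lipschitz at the corner $\overline\Gamma \cap \overline{\Gamma_\pm}$, so all trace and integration-by-parts manipulations must be justified in that low-regularity setting — this is what the reflection/doubling hypothesis is designed to circumvent.
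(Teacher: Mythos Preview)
Your overall architecture matches the paper: semigroup setup, Borichev--Tomilov, reduction to a high-frequency resolvent estimate for the interior impedance problem, Rellich/Morawetz multiplier $x\cdot\nabla_x$ exploiting star-shapedness $x\cdot n_x\ge c_0$, monotonicity replacement of $a,b$ by smoother damping, and even reflection across $y=\pm1$ to restore $C^{1,1}$ regularity near the corners. All of these ingredients are exactly the ones the paper uses.

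Where you diverge from the paper is the \emph{mechanism} you attribute to the loss of one power of $|\lambda|$. You expect the Rellich identity itself to degrade on a non-product cylinder: extra commutator terms from ``$y$-derivatives of the metric'' in $[\Delta, x\cdot\nabla_x]$, and an uncontrolled boundary term on $\Gamma$ involving $n_y\,\partial_y v$. Neither occurs. The Laplacian is flat, so $[\Delta, x\cdot\nabla_x]=2\Delta_x$ independently of the shape of $\Omega$; and the only place the full normal enters is through $\langle \partial_n u,\, x\cdot\nabla_x u\rangle_\Gamma$, where $\partial_n u$ is replaced wholesale by the impedance condition $ih\partial_n u=\mp a u - ihbu+g$ --- no separate trace estimate for $\partial_y v$ is needed. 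The divergence-theorem term picks up only $x\cdot n_x$, not $n_y$. Consequently the paper's $L^2$ resolvent estimate $\|u\|^2\le C(1+\lambda^2)\|f\|_{L^2}^2+C\|g\|_\Gamma^2$ holds \emph{identically} on product and non-product cylinders (Proposition~\ref{2t4}(1)), and this alone yields the $|\lambda|^3$ semigroup bound and hence $t^{-1/3}$.

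The genuine distinction between the two geometries lies elsewhere: on a \emph{product} cylinder, $\partial_y$ is tangent to $\Gamma$ and commutes with the trace, so $w=h\partial_y u$ satisfies another impedance problem and one gets the improved estimate $\|u\|^2\le C\|f\|_{H^1}^2+C\|g\|_{H^1(\Gamma)}^2$ (Proposition~\ref{2t3}). This is what allows the paper to exploit the $o_{H^1}$-part of the quasimode forcing and upgrade to $|\lambda|^2$. On a non-product cylinder $\partial_y$ is not tangent to $\Gamma$, so this differentiation trick fails and one simply discards the extra $H^1$ regularity of the forcing, settling for $|\lambda|^3$; see Remark~\ref{3t4}(2). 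Your proof of Theorem~\ref{0t6} will therefore go through once you carry out the Rellich computation carefully --- you will just find that the ``main obstacle'' you anticipated never materialises, and the bookkeeping is cleaner than you expected.
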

\begin{remark}\label{0t7}
This result also works with the Dirichlet or periodic boundary conditions on $\Gamma_\pm$. Note that using the periodic boundary condition on $\Gamma_\pm$ requires that $\Gamma_-$ is isometric to $\Gamma_+$, and that we use the periodic extension instead of reflection past $y=\pm1$ in Definition \ref{0gcn}. See Figure \ref{f6}. 
\end{remark}
We can also consider an infinite non-product cylinder. 

\begin{definition}[Infinite Non-Product Cylinders]\label{0gin}Parametrise $\mathbb{R}^d_x$ and $\mathbb{B}^d_x$ by $x=rw$ using spherical coordinates $(r,w)\in \mathbb{R}_r\times\mathbb{S}^{d-1}_w$. An infinite non-product cylinder $\Omega$ with star-shaped cross-sections is an unbounded $C^{1,1}$-domain $\Omega\subset\mathbb{B}_{r,w}^{d}(0, c_1)\times\mathbb{R}_y$ for some radius $c_1>0$, such that $\Omega=(\Phi^{-1})^*(\mathbb{B}_{r',w'}^d\times\mathbb{R}_{y'})$ via some diffeomorphism
\begin{equation}
\Phi(r',w',y')=(r'\rho(w', y'),w',y'),
\end{equation}
in which $\rho: \mathbb{S}^{d-1}_w\times \mathbb{R}_y\rightarrow [c_0', c_1]$ is a $C^{1,1}$-scaling function, for some $c_0'>0$. Note that under such assumption, all cross-sections $\Omega(y)=\{x: (x,y)\in\Omega\}$ parametrised by $y\in\mathbb{R}$ are necessarily non-empty and star-shaped, that is, there exists $c_0>0$ independent of $y$ with $x\cdot n_x\ge c_0>0$ almost everywhere on the boundary of the cross-sections $\pd\Omega(y)=\pd(\Omega(y))$, where $n_x$ is the outward-pointing normal defined almost everywhere. See Figure \ref{f5} for examples. 
\end{definition}

\begin{figure}
\begin{tikzpicture}[minimum size=0.01cm]
\draw[fill=gray!50] (-3, 0.8) -- plot [domain=-3:-0.5, smooth, variable=\x] ({\x}, {1/(1.15*sqrt{-\x+0.5})+0.399}) -- plot [smooth] coordinates {(-0.51, 0.7575+0.399) (0,0.925+0.399) (0.51, 0.7575+0.399)} -- plot [domain=0.5:3, smooth, variable=\x] ({\x}, {1/(1.15*sqrt{\x+0.5})+0.399}) -- (3,0.8) arc (90:270:-0.16 and 0.8) -- plot [domain=3:0.5, smooth, variable=\x] ({\x}, {-1/(1.15*sqrt{\x+0.5})-0.399}) -- plot [smooth] coordinates {(0.51, -0.7575-0.399) (0,-0.925-0.399) (-0.51, -0.7575-0.399)} -- plot [domain=-0.5:-3, smooth, variable=\x] ({\x}, {-1/(1.15*sqrt{-\x+0.5})-0.399}) -- (-3, -0.8) arc (270:90:-0.16 and 0.8) -- cycle;

\draw (0, +0.925+0.399) arc (90:270:-0.2648 and 1.324);
\draw [dashed] (0, 0) ellipse (-0.2648 and 1.324);
\draw [dashed] (3, 0) ellipse (-0.16 and 0.8);

\draw [dashed] (-3,0) ellipse (0.16 and 0.8);
\draw [dashed] plot [domain=-4:-3, smooth, variable=\x] ({\x}, {1/(1.15*sqrt{-\x+0.5})+0.399});
\draw [dashed] plot [domain=-4:-3, smooth, variable=\x] ({\x}, {-1/(1.15*sqrt{-\x+0.5})-0.399});
\draw [dashed] plot [domain=3:4, smooth, variable=\x] ({\x}, {1/(1.15*sqrt{\x+0.5})+0.399});
\draw [dashed] plot [domain=3:4, smooth, variable=\x] ({\x}, {-1/(1.15*sqrt{\x+0.5})-0.399});

\node at (-1.5, 1.1) {$\Gamma$};


\end{tikzpicture}\\
\begin{tikzpicture}[minimum size=0.01cm]
\draw[fill=gray!50] (0, 0.8) arc (90:270:-0.16 and 0.8) -- (0, -0.8) -- plot [smooth] coordinates {(0,-0.8) (0.1, -0.805) (0.2,-0.85) (0.4, -1.1) (0.6, -1.1) (0.8, -0.85) (0.9, -0.805) (1, -0.8)} -- (1,-0.8) -- plot [smooth] coordinates {(1,-0.8) (1.1, -0.805) (1.2,-0.85) (1.4, -1.1) (1.6, -1.1) (1.8, -0.85) (1.9, -0.805) (2, -0.8)} -- (2,-0.8) -- plot [smooth] coordinates {(2.5,-0.8) (2.6, -0.805) (2.7,-0.85) (2.9, -1.1) (3.1, -1.1) (3.3, -0.85) (3.4, -0.805) (3.5, -0.8)} -- (3.5,-0.8) -- plot [smooth] coordinates {(4.5,-0.8) (4.6, -0.805) (4.7,-0.85) (4.9, -1.1) (5.1, -1.1) (5.3, -0.85) (5.4, -0.805) (5.5, -0.8)} -- (6, -0.8) arc (270:90:-0.16 and 0.8) -- (6,0.8) -- plot [smooth] coordinates { (5.5, 0.8) (5.4, 0.805) (5.3, 0.85) (5.1, 1.1) (4.9, 1.1) (4.7,0.85) (4.6, 0.805) (4.5,0.8)} -- (3.5,0.8) -- plot [smooth] coordinates {(3.5, 0.8) (3.4, 0.805) (3.3, 0.85) (3.1, 1.1) (2.9, 1.1) (2.7,0.85) (2.6, 0.805) (2.5,0.8)} -- plot [smooth] coordinates {(2, 0.8) (1.9, 0.805) (1.8, 0.85) (1.6, 1.1) (1.4, 1.1) (1.2,0.85) (1.1, 0.805) (1,0.8)} -- plot [smooth] coordinates {(1, 0.8) (0.9, 0.805) (0.8, 0.85) (0.6, 1.1) (0.4, 1.1) (0.2,0.85) (0.1, 0.805) (0,0.8)} --  (0, 0.8) -- cycle;


\draw (0, 0.8) arc (90:270:-0.16 and 0.8);
\draw [dashed] (0,0) ellipse (0.16 and 0.8);


\draw (6, 0.8) arc (90:270:-0.16 and 0.8);
\draw [dashed] (6,0) ellipse (0.16 and 0.8);

\draw [dashed] (0, -0.8) -- (-0.5, -0.8);
\draw [dashed] (0, 0.8) -- (-0.5, 0.8);
\draw [dashed] (6, -0.8) -- (6.5, -0.8);
\draw [dashed] (6, 0.8) -- (6.5, 0.8);

\draw (0.5, 1.125) arc (90:270:-0.225 and 1.125);
\draw [dashed] (0.5, 0) ellipse (-0.225 and 1.125);

\draw (1.5, 1.125) arc (90:270:-0.225 and 1.125);
\draw [dashed] (1.5, 0) ellipse (-0.225 and 1.125);

\draw (3, 1.125) arc (90:270:-0.225 and 1.125);
\draw [dashed] (3, 0) ellipse (-0.225 and 1.125);

\draw (5, 1.125) arc (90:270:-0.225 and 1.125);
\draw [dashed] (5, 0) ellipse (-0.225 and 1.125);

\node at (2, 1) {$\Gamma$};
\end{tikzpicture}\\
\caption{Two examples of infinite non-product cylinders. The left infinite end of the second example is straight, and the right infinite end consists of bounded perturbations that are further apart as $y\rightarrow+\infty$. } 
\label{f5}
\end{figure}
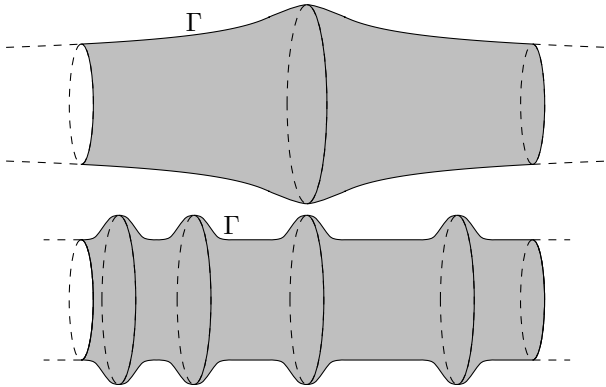

The $C^{1,1}$-assumption is essential to ensure the global elliptic regularity. Note that an infinite product cylinder is a special example of an infinite non-product cylinder. An infinite non-product cylinder has
\begin{equation}\label{1l3}
\pd\Omega=\Gamma=\overline{\{(x,y):x\in\pd\Omega(y), y\in\mathbb{R}\}}
\end{equation}
and a similar result holds if we assume $b(p)$ is bounded from below almost everywhere:
\begin{theorem}\label{0t5}
Let $u$ be the unique solution to the damped wave equation \eqref{1l10} on an infinite non-product cylinder $\Omega$ defined in Definition \ref{0gin}, impose the damping assumption \ref{0ad} and assume that $a(p), b(p)\in C^{0,1/2+\delta}(\Gamma)$ for some $\delta>0$, and $b\ge b_0>0$ on $\Gamma$. Then there is $C>0$ such that
\begin{equation}
E(u,t)^{\frac12}\le Ct^{-\frac13}\left(\|u_0\|_{H^2(\Omega)}+\|u_1\|_{H^1(\Omega)}\right),
\end{equation}
where $C$ is independent of the initial data $u_0$ and $u_1$.
\end{theorem}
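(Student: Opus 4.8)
The plan is to run the resolvent route to polynomial stability, with the two new features --- the non-compactness of $\Omega$ and the loss of the product structure --- handled respectively by a Morawetz--Rellich estimate whose constants are uniform along the axis and by a direct, rather than separated-variables, treatment of the waves travelling essentially along the cylinder. First write \eqref{1l10}--\eqref{1l11} as $\pd_t U=\mathcal{A}U$ with $U=(u,\pd_t u)$ on the energy space $\mathcal{H}=H^1(\Omega)\times L^2(\Omega)$, $\mathcal{A}(u,v)=(v,-\Delta u)$, with domain encoding the impedance condition. Because $a\ge a_0>0$ on $\Gamma$, the operator $\mathcal{A}$ is m-dissipative --- its dissipation is $-\int_\Gamma a\abs{\gamma v}^2\le0$ --- so by Lumer--Phillips it generates a contraction semigroup; and because $b\ge b_0>0$, the form $\|\nabla u\|_{L^2(\Omega)}^2+\int_\Gamma b\abs{\gamma u}^2$ is uniformly equivalent to $\|u\|_{H^1(\Omega)}^2$ (slice in $y$ and apply the trace--Poincar\'e inequality on the uniformly bounded star-shaped cross-sections), so $\mathcal{H}$ is the genuine energy space and $0\in\rho(\mathcal{A})$. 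By the Borichev--Tomilov theorem the claimed decay $E(u,t)^{1/2}\le Ct^{-1/3}(\|u_0\|_{H^2}+\|u_1\|_{H^1})$ follows once we establish $i\setr\subset\rho(\mathcal{A})$ together with the high-frequency bound $\|(\mathcal{A}-i\lambda)^{-1}\|_{\mathcal{H}\to\mathcal{H}}=\bigo(\abs{\lambda}^{3})$ as $\abs\lambda\to\infty$; for bounded $\lambda$ the resolvent is then automatically bounded by norm-continuity.

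Writing $(\mathcal{A}-i\lambda)(u,v)=(f,g)$ and eliminating $v=i\lambda u+f$ turns the resolvent equation into the interior impedance problem $(\Delta-\lambda^2)u=h$ in $\Omega$, $(\pd_n+i\lambda a+b)\gamma u=\ell$ on $\Gamma$, with $\|h\|_{L^2(\Omega)}\le C\abs\lambda\,\|(f,g)\|_{\mathcal{H}}$ and $\|\ell\|_{L^2(\Gamma)}\le C\|(f,g)\|_{\mathcal{H}}$; it therefore suffices to prove
\begin{equation*}
\|u\|_{H^1(\Omega)}+\abs\lambda\,\|u\|_{L^2(\Omega)}\le C\abs\lambda^{2}\left(\|h\|_{L^2(\Omega)}+\|\ell\|_{L^2(\Gamma)}\right),\qquad \abs\lambda\ge1,
\end{equation*}
with constant uniform in the axial variable. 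The loss $\abs\lambda^{2}$ is one power of $\abs\lambda$ worse than in the product setting of Theorems \ref{0t1} and \ref{0t4} and is precisely what degrades the rate from $t^{-1/2}$ to $t^{-1/3}$. Using the diffeomorphism $\Phi(r',w',y')=(r'\rho(w',y'),w',y')$ of Definition \ref{0gin} we pull the problem back to the straight cylinder $\mathbb{B}^d_{x'}\times\setr_{y'}$, where $\Delta$ becomes a uniformly elliptic divergence-form operator with $C^{0,1}$ coefficients; crucially every structural constant ($c_0,c_0',c_1$, the ellipticity constants, the trace constants) is uniform along the non-compact direction, and the star-shapedness of each cross-section survives as a one-sided lower bound on the relevant boundary symbol.

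Two integrations by parts drive the interior impedance estimate. Testing the equation against $\bar u$ and taking the imaginary part gives, using $a\ge a_0$, the trace bound $\abs\lambda\int_\Gamma a\abs{\gamma u}^2\le C\bigl(\|h\|_{L^2}\|u\|_{L^2}+\|\ell\|_{L^2(\Gamma)}\|\gamma u\|_{L^2(\Gamma)}\bigr)$, i.e. $L^2(\Gamma)$-control of the trace with an $\abs\lambda^{-1}$ gain, while the real part gives the Rellich balance $\|\nabla u\|_{L^2}^2-\lambda^2\|u\|_{L^2}^2+\int_\Gamma b\abs{\gamma u}^2=\cre(\cdots)$. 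Testing against the pulled-back radial field $x\cdot\nabla_x\bar u$, weighted harmlessly in $y$, produces --- thanks to star-shapedness --- a boundary term over $\Gamma$ bounded below by $c_0\int_\Gamma\abs{\pd_n u}^2$, the remaining boundary contributions being absorbed using the impedance condition and the trace bound just obtained; since $\pd\Omega=\Gamma$ there are no cylindrical-cap terms, but the identity must be justified on the non-compact cylinder by an exhaustion, using $u\in H^1$ and the uniform geometry to kill the flux as $\abs{y'}\to\infty$. Combining these identities controls $\|u\|_{H^1(\Omega)}+\abs\lambda\|u\|_{L^2(\Omega)}$ except for the part of $u$ that is nearly an eigenfunction of the transverse impedance problem with small imaginary eigenvalue --- the waves carrying almost all their momentum along the axis, which the damping sees only very weakly because their angle of incidence at $\Gamma$ is small. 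In the product setting one expands in the $y$-variable and controls each transverse problem on the fixed cross-section uniformly, as in Theorems \ref{0t1} and \ref{0t4}; here separation is unavailable, the cross-section varying with $y$, and one must instead localise near the $y$-slices where $\Omega(y)$ is widest --- these create transverse modes trapped in $y$-wells that the boundary damping barely reaches --- and estimate the operator there directly with multiplier fields adjusted to the varying cross-section. This additional trapping, absent in the product case, is what costs the extra power of $\abs\lambda$ and is the main obstacle of the proof; the H\"older regularity $C^{0,1/2+\delta}$ of $a,b$ enters in controlling the boundary error terms here and is handled, as in the compact cases, by the monotonicity device of the remark following Theorem \ref{0t1}.

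It remains to show $i\setr\subset\rho(\mathcal{A})$. A solution of $(\Delta-\lambda^2)u=0$ with $(\pd_n+i\lambda a+b)\gamma u=0$ is forced, on pairing with $\bar u$ and separating real and imaginary parts using $a\ge a_0$ and $b\ge b_0$, to satisfy $\gamma u=0$ and $\pd_n\gamma u=0$ on $\Gamma$; unique continuation from this Cauchy data --- valid since the coefficients are Lipschitz --- propagates $u\equiv0$ to every component meeting $\Gamma$, and a Rellich-type uniqueness argument as $\abs y\to\infty$, the cylinder being non-trapping at spatial infinity because its cross-sections are uniformly star-shaped, upgrades this to $u\equiv0$ on $\Omega$, so there is no $L^2$-eigenvalue on $i\setr$. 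Absence of spectrum, and not merely of eigenvalues, at each $i\lambda$ follows from the same uniqueness together with a limiting-absorption/Fredholm argument in which the non-compact ends are modelled by straight cylinders; the point $\lambda=0$ is precisely where the hypothesis $b\ge b_0>0$ is indispensable, as it annihilates the zero-frequency quasimodes that otherwise obstruct decay on non-compact geometries. Together with the high-frequency estimate, this yields the resolvent bound and hence the theorem.
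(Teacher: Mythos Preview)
Your overall route---semigroup setup, reduction to an interior impedance resolvent estimate, Rellich--Morawetz multipliers, then Borichev--Tomilov---matches the paper's. But you misidentify where the extra power of $\abs\lambda$ is lost and, as a result, insert a fictitious step. In the paper, the two multiplier identities you describe (pair with $\bar u$, pair with $x\cdot\nabla_x\bar u$) together with a Poincar\'e-type inequality on the cross-sections already \emph{close} the estimate $\|u\|\le C\abs\lambda\|h\|+C\|\ell\|_{L^2(\Gamma)}$ completely; there is no residual ``trapped'' piece, no need to localise near widest slices, and no $y$-well mechanism. That high-frequency bound (Proposition~\ref{2t1}, carried over verbatim in Proposition~\ref{4t1}) uses only star-shapedness of the cross-sections and never sees the product structure, so it holds identically on product and non-product cylinders alike.

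The one-power loss relative to the product case arises elsewhere. On a product cylinder, $\pd_y u$ solves \emph{another} impedance problem (Proposition~\ref{2t3}), so one applies the $L^2$-estimate to $\pd_y u$ and upgrades to an $H^1$-resolvent estimate; this is what yields $\abs\lambda^2$ on the generator and hence $t^{-1/2}$. On a non-product cylinder, $\pd_y$ is no longer tangent to $\Gamma$ and does not commute with the boundary operator, so this bootstrap is unavailable and one is stuck with the bare $L^2$-estimate, giving $\abs\lambda^3$ and $t^{-1/3}$. This is Remark~\ref{3t4}(2). Your sketch contains the correct ingredients for the $t^{-1/3}$ result but attributes the loss to a geometric trapping phenomenon that is not present.

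Two smaller points. The diffeomorphism $\Phi$ to a straight cylinder is used in the paper only to establish uniform $H^2$ elliptic regularity (so that the quasimodes live in $H^2$), not to run the multiplier argument, which is done directly on $\Omega$ with the Euclidean field $x\cdot\nabla_x$. And the absence of imaginary spectrum needs neither unique continuation nor limiting absorption: the quantitative resolvent estimate itself---which, thanks to $b\ge b_0>0$, holds uniformly down to $\lambda=0$---already shows $(A+i\lambda)^{-1}$ is bounded on the whole imaginary axis.
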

Note that in those non-product geometries we were not able to obtain the sharp $t^{-1/2}$-decay as in Theorem \ref{0t1}, due to the fact that the second conormal derivatives at the boundary are not sufficiently damped. See Remark \ref{3t4}(2). It is not known to the author whether this $t^{-1/3}$-decay is optimal in those more generic geometric settings. 

The paper is organised in the following manner: in Section 2 we develop some resolvent estimates for the interior impedance problems on compact cylinders; in Section 3, we use those resolvent estimates to obtain the polynomial decay rates in Theorem \ref{0t1} and Theorem \ref{0t6}; in Section 4, we establish the resolvent estimates on infinite cylinders and prove Theorem \ref{0t4} and Theorem \ref{0t5}. 

\subsection{Related Works}
For exponential energy decay for damped wave equations under Geometric Control Conditions, the fundamental works are \cite{rt74} and \cite{blr92}. Also note \cite{lag83} for star-shaped domains, and \cite{bur97,cv02} on the cases of $C^3$-boundary and $C^1$-boundary. Logarithmic decay from the damping within arbitrarily small open sets has been well investigated in \cite{leb93,lr97,bur98}. Note due to recent Schrödinger observability results \cite{dj18,jin20}, interior damping on any open set of a compact hyperbolic surface also gives exponential decay. 

Polynomial decay on partially cylindrical domains has been extensively covered in \cite{lr05,bh07,phu07,phu08,nis09,nis13}. Also note the related non-concentration results in \cite{bz04,bz05,bhw07,bz12}. The relation between polynomial decay and regularity of interior damping on $\mathbb{T}^2$ has been well explained in \cite{aln14,kle19a,kle19b,dk20}. 

For damped wave equations on non-compact manifolds, note the exponential and logarithmic decay obtained on $\mathbb{R}^d$ in \cite{bj16}, and on asymptotically cylindrical and conic manifolds in \cite{wan20}. Global and local polynomial decay has been reported in \cite{wun17,jr18,roy18a}, and specifically on cylindrical domains in \cite{mr18,roy18b,roy15}.

Interior impedance problems have been well studied within the literature of numerical analysis. We refer the readers to \cite{bsw16,ms14,melenk95} and the references therein. 


\subsection{Acknowledgement}
The author is grateful to Jared Wunsch for numerous discussions around these results as well as many valuable comments on the manuscript.

\section{Resolvent Estimates for Interior Impedance Problems}
The energy decay rates for the waves damped from the boundary are closely related to the resolvent estimates for interior impedance problems: Consider on a compact non-product cylinder $\Omega$, 
\begin{gather}\label{2l25}
(\Delta-\lambda^2)u=f,\text{ on }\Omega,\\
i\pd_n u+a \lambda \gamma u+ib\gamma u=g,\text{ on }\Gamma,\\
\label{2l27}
\pd_n u=0,\text{ on }\Gamma_\pm,
\end{gather}
in which the damping functions $a(p), b(p)$ satisfy the damping assumption \ref{0ad}. We have the following resolvent estimates for the interior impedance problem:
\begin{proposition}\label{2t4}
Consider a solution $u\in H^2(\Omega)$ to the interior impedance problem \eqref{2l25} under the damping Assumption \ref{0ad}. We have the following estimates:
\begin{enumerate}[wide]
\item On a compact non-product cylinder $\Omega$ defined in Definition \ref{0gcn}, assume $f\in L^2(\Omega)$ and $g\in L^2(\Gamma)$. Then for any $\epsilon>0$, we have some $C>0$ independent of $\lambda, u, f, g$ such that for any $\abs{\lambda}\ge\epsilon$,
\begin{equation}\label{2l29}
\|u\|_{L^2(\Omega)}^2+(1+\lambda^2)^{-1}\|\nabla u\|_{L^2(\Omega)}^2\le C(1+\lambda^{2})\|f\|_{L^2(\Omega)}^2+C\|g\|_{L^2(\Gamma)}^2.
\end{equation}
The same estimate holds if we replace the homogeneous Neumann boundary condition \eqref{2l27} on $\Gamma_\pm$ by the homogeneous Dirichlet boundary condition
\begin{equation}\label{2l30}
\gamma u|_{\Gamma_\pm}=0.
\end{equation}
 
\item On a compact product cylinder $\Omega$ defined in Definition \ref{0gp}, assume further that $a,b,\pd_y a,\pd_y b\in C^{0,1/2+\delta}(\Gamma)$ for some $\delta>0$, $f\in H^1(\Omega)$, $g\in H^{3/2}(\Gamma)$. Then for any $\epsilon>0$, we have some $C>0$ independent of $\lambda, u, f, g$ such that for any $\abs{\lambda}\ge\epsilon$,
\begin{equation}\label{2l32}
\|u\|_{L^2(\Omega)}^2+(1+\lambda^2)^{-1}\|\nabla u\|_{L^2(\Omega)}^2\le C\|f\|_{H^1(\Omega)}^2+C\|g\|_{H^1(\Gamma)}^2.
\end{equation}

\item We could allow $\epsilon=0$ in (1) and (2), if one further assumes
\begin{equation}\label{2l33}
0< b_0\le b(p)\le b_1 
\end{equation}
essentially on $\Gamma$.

\end{enumerate}
\end{proposition}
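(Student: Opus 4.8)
The plan is to combine two integration-by-parts identities for the solution of \eqref{2l25}: the \emph{energy identity} obtained by testing against $\bar u$, and the \emph{Rellich--Morawetz identity} obtained by testing against $\overline{\mathcal M u}$ with $\mathcal M u = x\cdot\nabla_x u + \tfrac{d}{2}\,u$, built from the radial vector field $Z = x\cdot\nabla_x$ in the \emph{cross-sectional} variables only. The geometric point is that $Z$ is tangent to the caps $\Gamma_\pm$, where only Neumann (or Dirichlet/periodic) conditions hold and the Geometric Control Condition fails, so $Z$ produces no boundary term there, while star-shapedness of the cross-sections gives $Z\cdot n \ge c_0 > 0$ on the wall $\Gamma$; the $C^{1,1}$-hypothesis on $\Gamma$ (and on the reflection of $\Omega$ across $y=\pm1$) legitimises these identities, the trace inequalities and the regularity $u\in H^2$ on a domain with a codimension-two corner. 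Testing against $\bar u$ and inserting $\partial_n u = ia\lambda u - bu - ig$ on $\Gamma$, $\partial_n u = 0$ on $\Gamma_\pm$: the imaginary part gives $|\lambda|\int_\Gamma a|u|^2 \le C(\|g\|_{L^2(\Gamma)}\|u\|_{L^2(\Gamma)} + \|f\|_{L^2(\Omega)}\|u\|_{L^2(\Omega)})$, hence, since $a\ge a_0$, control of $\|u\|_{L^2(\Gamma)}$; the real part gives $\|\nabla u\|_{L^2(\Omega)}^2 \le C(\lambda^2\|u\|_{L^2(\Omega)}^2 + \|u\|_{L^2(\Gamma)}^2 + \|g\|_{L^2(\Gamma)}\|u\|_{L^2(\Gamma)} + \|f\|_{L^2(\Omega)}\|u\|_{L^2(\Omega)})$. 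In the Rellich--Morawetz identity the weight $\tfrac{d}{2}$ is chosen so that the interior contribution collapses to exactly $-2\|\nabla_x u\|_{L^2(\Omega)}^2$ (it cancels the $|\partial_y u|^2$ and $\lambda^2|u|^2$ interior terms), the $\Gamma_\pm$-contribution vanishes, and the $\Gamma$-contribution, after using $Z\cdot n\in[c_0,C]$ to absorb the cross term $2\,\mathrm{Re}(\partial_n u)\overline{Z\cdot\nabla_{\mathrm{tan}}u}$ into the good-signed $-(Z\cdot n)|\nabla_{\mathrm{tan}}u|^2$ and inserting the impedance relation, is bounded by $C(\lambda^2\|u\|_{L^2(\Gamma)}^2 + \|u\|_{L^2(\Gamma)}^2 + \|g\|_{L^2(\Gamma)}^2)$ plus the forcing $|\langle f,\mathcal M u\rangle|$. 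This controls $\|\nabla_x u\|_{L^2(\Omega)}^2$ by boundary and forcing terms; the decisive step is then to recover $\|u\|_{L^2(\Omega)}$ from a cross-sectional Poincaré inequality $\|u\|_{L^2(\Omega)}^2 \le C(\|\nabla_x u\|_{L^2(\Omega)}^2 + \|u\|_{L^2(\Gamma)}^2)$, valid with a uniform constant because every slice $\Omega(y)$ is a bounded $C^{0,1}$ domain whose boundary sits inside $\Gamma$. Feeding back the trace bounds, estimating $|\langle f,\mathcal M u\rangle| \le C\|f\|_{L^2(\Omega)}(\|\nabla u\|_{L^2(\Omega)} + \|u\|_{L^2(\Omega)})$ together with $\|\nabla u\|_{L^2(\Omega)} \le C\lambda\|u\|_{L^2(\Omega)} + (\text{lower order})$, and absorbing via Young's inequality, one obtains $\|u\|_{L^2(\Omega)}^2 \le C(1+\lambda^2)\|f\|_{L^2(\Omega)}^2 + C\|g\|_{L^2(\Gamma)}^2$, whence \eqref{2l29} by the energy identity; the $(1+\lambda^2)$ on $f$ is exactly the price of the term $\lambda\|f\|_{L^2}\|u\|_{L^2}$ in the forcing. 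The bounded range $\epsilon\le|\lambda|\le\Lambda$, where $\lambda$-powers give no gain, is handled separately by a contradiction argument using that the dissipative impedance problem has no real resonances; for Dirichlet caps the argument is identical since $Z$ is still tangent to $\Gamma_\pm$ and $\nabla_{\mathrm{tan}}u$ vanishes there.

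\textbf{Parts (2) and (3).} For a product cylinder, $\partial_y$ commutes with $\Delta$, so $v=\partial_y u$ solves \eqref{2l25} with $f$ replaced by $\partial_y f\in L^2(\Omega)$; on $\Gamma$ it satisfies $i\partial_n v + a\lambda v + ibv = \partial_y g - (\partial_y a)\lambda u - (\partial_y b)u$, and the Neumann condition $\partial_y u=0$ on $\Gamma_\pm$ becomes the Dirichlet condition $v=0$, already covered by Part (1). One runs the Part (1) argument for $u$ and $v$ simultaneously. The feature that removes the loss is twofold: when $f\in H^1(\Omega)$ one integrates $\int_\Omega f\,\overline{x\cdot\nabla_x u}$ by parts, moving the derivative onto $f$ and producing the $\lambda$-free bound $C\|f\|_{H^1(\Omega)}(\|u\|_{L^2(\Omega)} + \|u\|_{L^2(\Gamma)})$ for the Morawetz forcing; and when $g\in H^{3/2}(\Gamma)$ one controls $\partial_n u$ on $\Gamma$ in $L^2$ by elliptic regularity, hence $\lambda u$ on $\Gamma$ in $L^2$ through the impedance relation, avoiding the offending $\lambda\|f\|_{L^2}\|u\|_{L^2}$ term. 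The Hölder bounds $a,b,\partial_y a,\partial_y b\in C^{0,1/2+\delta}$ keep the coupling terms $(\partial_y a)\lambda u$, $(\partial_y b)u$ and the solutions $u,\partial_y u$ (which must lie in $H^2$) in the required spaces, and $\partial_y g\in H^{1/2}(\Gamma)$ supplies the boundary datum for $v$; adding the $u$- and $v$-estimates gives \eqref{2l32}. For part (3), when $b\ge b_0>0$ the form $u\mapsto \|\nabla u\|_{L^2(\Omega)}^2 + \int_\Gamma b|u|^2$ is coercive on $H^1(\Omega)$ by Poincaré, so $\lambda=0$ is not a resonance and the impedance problem is boundedly invertible for small $|\lambda|$; patching this with the $|\lambda|\ge\epsilon$ estimates (whose constants are locally uniform) allows $\epsilon=0$. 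This hypothesis is sharp: if $b\equiv 0$ the constants lie in the kernel at $\lambda=0$, so no such bound can hold there.

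\textbf{Main obstacle.} The crux is Part (1). Because the cross-sectional multiplier $\mathcal M u$ carries no information about $\partial_y u$, any attempt to bound $\lambda^2\|u\|_{L^2(\Omega)}^2$ head-on closes only circularly — the $|\partial_y u|^2$ interior term feeds $\lambda^2\|u\|^2$ back in through the energy identity. The working route is to give up on that bound, extract $\|\nabla_x u\|_{L^2(\Omega)}^2$ cleanly and reconstruct $\|u\|_{L^2(\Omega)}^2$ by cross-sectional Poincaré; this is precisely what forces the $(1+\lambda^2)$ loss, which is in turn responsible for the $t^{-1/2}$ (respectively $t^{-1/3}$) rate rather than a faster one. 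The residual technical friction — justifying every identity on a Lipschitz domain with a corner, and, for Part (1), accommodating the non-product geometry where the normal to $\Gamma$ has a $y$-component while $Z\cdot n$ still stays bounded below — is what the $C^{1,1}$ and star-shapedness hypotheses are spent on.
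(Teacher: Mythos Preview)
Your approach to Part~(1) is essentially the paper's: the same two identities (pairing with $\bar u$ and with the cross-sectional vector field $x\cdot\nabla_x$), the same cross-sectional Poincar\'e inequality (Lemma~\ref{2t2}) to recover $\|u\|_{L^2}$ from $\|\nabla_x u\|_{L^2}$, and the same observation that the $(1+\lambda^2)$ loss enters precisely through that Poincar\'e step. The paper packages the computation semiclassically (Proposition~\ref{2t1}) and then assembles Proposition~\ref{2t4} by contradiction for all $|\lambda|\ge\epsilon$, whereas you run the identities directly and reserve contradiction for bounded $\lambda$; this is cosmetic. Part~(3) also matches.

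There is, however, a genuine gap in your Part~(2). Simply ``adding the $u$- and $v$-estimates'' from Part~(1) does \emph{not} give \eqref{2l32}: the Part~(1) estimate for $u$ already carries the factor $\lambda^2$ on $\|f\|^2$, and nothing in the $v$-estimate cancels it. Your two proposed mechanisms for removing the loss do not work. Integrating $\langle f, x\cdot\nabla_x u\rangle$ by parts improves the Morawetz forcing term, but that term contributes only $h^{-2}\|f\|^2$ in \eqref{2l13}; the $h^{-4}\|f\|^2$ that drives the loss enters through the boundary trace bound \eqref{2l6}, which your integration by parts does not touch. And ``controlling $\partial_n u$ on $\Gamma$ in $L^2$ by elliptic regularity'' is circular: any $H^2$ bound on $u$ from $\Delta u=\lambda^2 u+f$ carries a $\lambda^2\|u\|$ on the right, so you cannot get $\|\partial_n u\|_{L^2(\Gamma)}$ uniformly in $\lambda$ this way.

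The actual mechanism in the paper is different and simpler. Applying Proposition~\ref{2t1} to $w=h\partial_y u$ yields a bound on $\|h\partial_y u\|^2$ (this uses \eqref{2l10} to control the $\|u\|_\Gamma$ appearing in $\tilde g$). Once $\|h\partial_y u\|^2$ is in hand, you add it to the bound \eqref{2l11} on $\|h\nabla_x u\|^2$ to obtain the \emph{full} $\|h\nabla u\|^2$; then the real part of the energy identity \eqref{2l4}, which says $\|h\nabla u\|^2-\|u\|^2$ is small, recovers $\|u\|^2$ directly---\emph{bypassing} the Poincar\'e step and hence the $h^{-2}$ it costs. That is where the $\lambda^2$ is saved. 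Your outline has the right ingredient ($v=\partial_y u$), but misroutes the closure.
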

\begin{remark}\label{2t7}
\begin{enumerate}[wide]
\item The estimate \eqref{2l29} is also sharp according to Proposition \ref{3t5}.
\item The periodic boundary condition on $\Omega_\pm$,
\begin{equation}\label{2l31}
\gamma u|_{\Gamma_-}=\gamma u|_{\Gamma_+}, \ \gamma \pd_y u|_{\Gamma_-}=\gamma \pd_y u|_{\Gamma_-}.
\end{equation}
also works in Proposition \ref{2t4}(1). See Remark \ref{0t7} for the domain requirement for the periodic boundary condition.
\item We could replace the homogeneous Neumann boundary condition on $\Gamma_\pm$ by the homogeneous Dirichlet condition or the periodic boundary condition and the same result in Proposition \ref{2t4}(2) will hold, if we assume further $f$ satisfies the same boundary condition. See Remark \ref{2t5}. 
\end{enumerate}
\end{remark}

Inner products and norms are defaulted to be those in $L^2(\Omega)$ if not otherwise specified. 
\begin{proposition}[High Frequency $L^2$-Resolvent Estimate]\label{2t1}
On a compact non-product cylinder $\Omega$ defined in Definition \ref{0gcn}, impose the damping assumption \ref{0ad}. Consider any solution $u(x;h)\in H^2(\Omega)$ to the semiclassical problem
\begin{gather}\label{2l1}
P_h u(p;h)=(h^2\Delta-1)u(p;h)=f(p;h), \ p\in\Omega,\\
\label{2l7}
\left(ih\pd_n \pm a(p)+ihb(p)\right)u(p;h)=g(p;h), \ p\in \Gamma,\\
\label{2l20}
ih\pd_n u(p;h)=0, \ p\in \Gamma_\pm,
\end{gather}
where $f\in L^2(\Omega)$ and $g\in L^2(\Gamma)$ at each $h$. Then for any $h_0>0$ there is some $C>0$ such that uniformly in $h\in(0, h_0]$ we have
\begin{equation}\label{2l12}
\left\|u\right\|_{H_h^1(\Omega)}^2\le C h^{-6}\left\|f\right\|^2_{L^2(\Omega)}+Ch^{-2}\|g\|^2_{L^2(\Gamma)}
\end{equation}
with 
\begin{gather}\label{2l10}
\left\|u\right\|_{L^2(\Gamma)}^2\le C h^{-4}\left\|f\right\|^2_{L^2(\Omega)}+C\|g\|^2_{L^2(\Gamma)},\\
\int_{\Gamma}\abs{h\nabla u}^2\le C h^{-4}\left\|f\right\|^2_{L^2(\Omega)}+C\|g\|^2_{L^2(\Gamma)}.
\end{gather}
And if we replace the boundary condition \eqref{2l20} on $\Gamma_\pm$ by the homogeneous Dirichlet boundary condition \eqref{2l30} or the periodic boundary condition \eqref{2l31}, the same estimates hold.
\end{proposition}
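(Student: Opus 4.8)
The plan is to prove the estimates directly, through the standard circle of ideas for resolvent bounds on star-shaped domains --- the $L^2$ energy identity, a Rellich--Morawetz multiplier identity, and a cross-sectional Friedrichs inequality --- chained so as to close on $N:=\|u\|_{H^1_h(\Omega)}$. Write $\|\cdot\|=\|\cdot\|_{L^2(\Omega)}$ and $\|\cdot\|_\Gamma=\|\cdot\|_{L^2(\Gamma)}$; fix $h_0$, let all constants depend on $h_0$, and use $h\le h_0$ freely. First I would record the energy identity: pairing $P_h u=f$ with $u$, integrating by parts (legitimate for $u\in H^2$ on the piecewise-$C^{1,1}$ domain $\Omega$), and inserting the impedance relation $ih\pd_n u=g\mp au-ihbu$ on $\Gamma$ together with $\pd_n u=0$ on $\Gamma_\pm$. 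Taking imaginary parts annihilates all bulk terms, and since $a\ge a_0>0$ a Young absorption of $h\|g\|_\Gamma\|u\|_\Gamma$ yields, for either choice of sign,
\[ \|u\|_\Gamma^2\le C\big(h^{-1}\|f\|\,N+\|g\|_\Gamma^2\big); \]
taking real parts yields $\|h\nabla u\|^2\le C\big(\|u\|^2+\|u\|_\Gamma^2+\|g\|_\Gamma^2+\|f\|\,\|u\|\big)$. With Dirichlet or periodic conditions on $\Gamma_\pm$ the $\Gamma_\pm$-integrals vanish or cancel in pairs, so these bounds persist unchanged.

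Next I would apply the Rellich--Pohozaev identity for $P_h$ with the multiplier $Zu+\tfrac d2 u$, where $Z=x\cdot\nabla_x$ is the cross-sectional radial vector field. Since $\operatorname{div}Z=d$ and $Z$ is tangent to the $y$-fibration, the bulk contributions collapse to $2\|h\nabla_x u\|^2$: the would-be terms $\pm d h^2\|\nabla u\|^2$ and $\pm d\|u\|^2$ cancel exactly, which is precisely why neither $\|u\|$ nor $\|h\pd_y u\|$ is controlled directly and a loss of powers of $h$ is unavoidable. All $\Gamma_\pm$-boundary terms vanish because $Z\cdot n=0$ and $\pd_n u=0$ there. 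On $\Gamma$ the positive term $h^2\int_\Gamma(Z\cdot n)\,|\nabla u|^2\ge c_0\|h\nabla u\|_\Gamma^2$ is retained, using that cross-sectional star-shapedness forces $Z\cdot n=x\cdot n'\ge c_0>0$ on $\Gamma$: the $x$-component $n'$ of the outward unit normal to $\pd\Omega$ is a positive multiple of the cross-sectional normal $n_x$, the multiple being bounded below by the $C^{1,1}$ geometry of $\Gamma$ (and, in the infinite case, by the scaling function $\rho$), while the reflection hypothesis makes $n'=n_x$ along the Lipschitz edge $\Gamma\cap\Gamma_\pm$. The remaining $\Gamma$-terms are $2h^2\operatorname{Re}\int_\Gamma\pd_n u\,\overline{Zu}$ and lower-order traces; decomposing $Zu|_\Gamma=(Z\cdot n)\pd_n u+Z_\top u$ into normal and tangential parts and inserting the impedance condition (so that $|h\pd_n u|\le C(|g|+|u|)$ pointwise on $\Gamma$, with $|Z_\top u|\le C|\nabla u|$ there), the normal piece is bounded by $C(\|g\|_\Gamma^2+\|u\|_\Gamma^2)$, the tangential cross term by $\eta\|h\nabla u\|_\Gamma^2+C_\eta(\|g\|_\Gamma^2+\|u\|_\Gamma^2)$ with $\eta$ small enough to absorb into the left side, and $|\langle f,Zu+\tfrac d2 u\rangle|\le Ch^{-1}\|f\|\,N$. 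Combined with the first display this gives
\[ \|h\nabla_x u\|^2+\|h\nabla u\|_\Gamma^2\le C\big(h^{-1}\|f\|\,N+\|g\|_\Gamma^2\big). \]

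To close the estimate I would supply the missing control of $\|u\|$ by a cross-sectional Friedrichs inequality that is uniform in $y$. For fixed $y$, integrating $\operatorname{div}_x(x|u|^2)=d|u|^2+2\operatorname{Re}(x\cdot\nabla_x u)\bar u$ over $\Omega(y)$, applying the divergence theorem, and using $|x|\le c_1$ gives $\|u(\cdot,y)\|_{L^2(\Omega(y))}^2\le C\big(\|u(\cdot,y)\|_{L^2(\pd\Omega(y))}^2+\|\nabla_x u(\cdot,y)\|_{L^2(\Omega(y))}^2\big)$ with $C$ depending only on $c_1$ and $d$, hence uniform in $y$; integrating in $y$ and using that the surface measure on $\Gamma$ dominates $d\sigma_{\pd\Omega(y)}\,dy$, so that $\int\|u(\cdot,y)\|_{L^2(\pd\Omega(y))}^2\,dy\le\|u\|_\Gamma^2$, one obtains $\|u\|^2\le C\big(\|u\|_\Gamma^2+h^{-2}\|h\nabla_x u\|^2\big)$. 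Feeding in the two displays above yields $\|u\|^2\le C\big(h^{-3}\|f\|\,N+h^{-2}\|g\|_\Gamma^2\big)$; the real-part bound of the first step upgrades this to the same bound for $\|h\nabla u\|^2$, so $N^2\le C\big(h^{-3}\|f\|\,N+h^{-2}\|g\|_\Gamma^2\big)$, and a Young absorption of $h^{-3}\|f\|\,N$ gives \eqref{2l12}. Inserting the resulting bound $N\le C\big(h^{-3}\|f\|+h^{-1}\|g\|_\Gamma\big)$ back into the two displays produces \eqref{2l10} and $\int_\Gamma|h\nabla u|^2\le C\big(h^{-4}\|f\|^2+\|g\|_\Gamma^2\big)$. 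For Dirichlet or periodic conditions on $\Gamma_\pm$ nothing changes: in both identities the cap terms still vanish ($Z\cdot n=0$ on $\Gamma_\pm$, and $Zu|_{\Gamma_\pm}$, a tangential derivative, vanishes under Dirichlet) or cancel between $\Gamma_-$ and $\Gamma_+$.

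The step I expect to be the real obstacle is the second one. One must orchestrate the Rellich identity so that the genuinely positive boundary term $c_0\|h\nabla u\|_\Gamma^2$ stays on the good side, the term $\int_\Gamma(Z\cdot n)|\pd_n u|^2$ is reabsorbed \emph{through} the impedance condition rather than fighting it, and the tangential cross term comes out small enough to absorb --- all of which hinge on the uniform lower bound $Z\cdot n\ge c_0>0$ on $\Gamma$, whose verification for non-product cylinders, and across the Lipschitz edge $\Gamma\cap\Gamma_\pm$, is the one genuinely geometric point in the argument. Everything else is bookkeeping, and the powers $h^{-6}$, $h^{-4}$ are dictated by the chain of elementary losses: $h^{-1}$ from the energy identity, $h^{-2}$ from converting $\|h\nabla_x u\|$ into $\|\nabla_x u\|$ in the Friedrichs step, and a final Young absorption.
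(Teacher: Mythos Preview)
Your proposal is correct and follows essentially the same route as the paper's proof: the energy identity paired with $u$, the Rellich--Morawetz multiplier $x\cdot\nabla_x u$, and the cross-sectional Poincar\'e/Friedrichs inequality (the paper's Lemma~\ref{2t2}), combined to close on $\|u\|_{H^1_h}$. The only cosmetic differences are that you use the Pohozaev multiplier $Zu+\tfrac{d}{2}u$ so the bulk collapses directly to $2\|h\nabla_x u\|^2$, while the paper pairs with $-ihx\cdot\nabla_x u$ alone and then invokes the real-part identity \eqref{2l4} to effect the same cancellation; and you derive the Friedrichs inequality by integrating the slice estimate in $y$, whereas the paper applies the divergence theorem once on $\Omega$.
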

We need a Poincaré inequality to prove this result:
\begin{lemma}\label{2t2}
Let $\Omega$ be a compact non-product cylinder. For any $u\in H^1(\Omega)$ we have some $C>0$ independent of $u$ such that
\begin{equation}
\|u\|_{L^2(\Omega)}^2\le C\|\nabla_x u\|_{L^2(\Omega)}^2+C\int_\Gamma (x\cdot n_x)\abs{u}^2,
\end{equation}
where $n_x=\Pi_x n$ is the projection of the outward-pointing normal $n$ onto $\mathbb{R}^d_x$. 
\end{lemma}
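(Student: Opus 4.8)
The plan is to exploit the star-shapedness of the cross-sections $\Omega(y)$ via a Rellich-type / multiplier identity applied fibrewise in $x$, then integrate in $y$. Fix $y$ and work on the cross-section $\Omega(y)\subset\mathbb{R}^d_x$. The key is the divergence identity $\div_x(\abs{u}^2 x) = d\abs{u}^2 + 2\cre(\bar u\, x\cdot\nabla_x u)$, which upon integrating over $\Omega(y)$ and applying the divergence theorem gives
\begin{equation}
d\int_{\Omega(y)}\abs{u}^2\,dx = \int_{\pd\Omega(y)}(x\cdot n_x)\abs{u}^2\,dS_x - 2\cre\int_{\Omega(y)}\bar u\,(x\cdot\nabla_x u)\,dx.
\end{equation}
Since $d\ge 1$ and $\Omega(y)$ is bounded with $\abs{x}\le c_1$ on $\Omega\subset\mathbb{B}^d_x(0,c_1)\times\cdots$, the last term is bounded by $2c_1\|u\|_{L^2(\Omega(y))}\|\nabla_x u\|_{L^2(\Omega(y))}$, so Cauchy–Schwarz with a parameter absorbs it; one obtains
\begin{equation}
\int_{\Omega(y)}\abs{u}^2\,dx \le C\int_{\Omega(y)}\abs{\nabla_x u}^2\,dx + C\int_{\pd\Omega(y)}(x\cdot n_x)\abs{u}^2\,dS_x,
\end{equation}
with $C$ depending only on $d$ and $c_1$ (not on $y$). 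Integrating this inequality in $y$ over $[-1,1]$ (or over $\mathbb{R}$, though the statement is for compact cylinders) and noting that the fibred boundary integral reassembles, up to the Jacobian factors relating $dS_x\,dy$ to the surface measure on $\Gamma$, to a constant multiple of $\int_\Gamma(x\cdot n_x)\abs{u}^2$, yields the claimed estimate.

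The main technical point — and the step I would be most careful about — is the relation between the fibrewise surface measure $dS_x\,dy$ on $\pd\Omega(y)$ and the genuine surface measure on $\Gamma$, together with the relation between $n_x$ as the outward normal to $\pd\Omega(y)$ in $\mathbb{R}^d_x$ and $\Pi_x n$, the projection of the outward normal $n$ to $\pd\Omega$. Since $\Gamma$ is only $C^{1,1}$ and, in the non-product case, genuinely tilts relative to the $y$-axis, one has $n = (n_x, n_y)$ with $\abs{n_x}^2+n_y^2=1$ and the coarea-type factor comparing $dS_\Gamma$ to $dS_{\pd\Omega(y)}\,dy$ is $(1-n_y^2)^{-1/2} = \abs{n_x}^{-1}$, which could blow up where $\Gamma$ becomes tangent to the cross-sectional slices. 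I would handle this by instead integrating the divergence identity for the vector field $(\abs{u}^2 x, 0)$ directly over $\Omega\subset\mathbb{R}^{d+1}$: then $\div_{x,y}(\abs{u}^2 x,0) = d\abs{u}^2 + 2\cre(\bar u\, x\cdot\nabla_x u)$ as before, and the divergence theorem on $\Omega$ produces exactly $\int_{\pd\Omega}(x\cdot n_x)\abs{u}^2\,dS$ with $n_x=\Pi_x n$ — no fibring, no Jacobian, no tangency issue. On $\Gamma_\pm$ one has $n=(0,\pm1)$ so $x\cdot n_x=0$ there and that piece of $\pd\Omega$ contributes nothing; on $\Gamma$ the star-shapedness hypothesis $x\cdot n_x\ge c_0>0$ a.e. guarantees the boundary term has the right sign (though for this inequality we only need boundedness of $x\cdot n_x$, which follows from $\abs{x}\le c_1$). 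This global version also makes transparent why no Poincaré-type restriction on $u$ along $y$ is needed: the identity is purely in the $x$-variables.

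The remaining routine points are: justifying the divergence theorem on the Lipschitz (indeed $C^{0,1}$, with $C^{1,1}$ cylindrical wall) domain $\Omega$ for $u\in H^1(\Omega)$ — standard by density of $C^\infty(\bar\Omega)$ in $H^1$ and the trace theorem on Lipschitz domains, with $\abs{u}^2 x\in W^{1,1}(\Omega)$ by the product rule and Cauchy–Schwarz; and absorbing the cross term, which is immediate from $2c_1\|u\|\,\|\nabla_x u\| \le \tfrac{d}{2}\|u\|^2 + \tfrac{2c_1^2}{d}\|\nabla_x u\|^2$ and moving the first term to the left. I do not anticipate any genuine obstacle here — the lemma is the classical Rellich identity adapted to the cylinder, and the only thing worth flagging is the choice to run the argument globally on $\Omega$ rather than slice-by-slice, which sidesteps all measure-theoretic bookkeeping.
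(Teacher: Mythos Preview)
Your proposal is correct, and the approach you ultimately settle on --- applying the divergence theorem globally on $\Omega$ to the vector field $(\abs{u}^2 x,0)$, noting that $\Gamma_\pm$ contributes nothing since $n_x=0$ there, and absorbing the cross term via Cauchy--Schwarz with parameter --- is exactly the paper's proof. Your initial fibrewise detour is unnecessary (as you yourself observe), but your diagnosis of why the global argument is cleaner is accurate and matches the paper's one-line computation.
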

\begin{proof}
Note
\begin{equation}
\int_\Gamma (x\cdot n_x)\abs{u}^2=\int_\Omega -\nabla^* \left(\abs{u}^2(x,0)\right)=d\|u\|^2+2\langle x\cdot\nabla_x u, u\rangle,
\end{equation}
where $\nabla^*$ is the divergence operator on $T\Omega$, locally reads $\sum_i X^i\pd_i\mapsto -\sum_i \pd_i X^i$. So 
\begin{equation}
\|u\|^2\le d^{-1}\int_\Gamma (x\cdot n_x)\abs{u}^2+8d^{-2}\|x\cdot\nabla_x u\|+\frac12\|u\|^2,
\end{equation}
the absorption of the last term with boundedness of $x$ concludes the proof. 
\end{proof}
\begin{remark}
Note that the boundedness in $y$ is not needed in this proof: only $\abs{x}$ needs to be uniformly bounded in $\Omega$. Therefore this lemma continues to work in the setting of cylinders infinite in $y$, that is, infinite non-product cylinders considered in Section 4. 
\end{remark}
\begin{proof}[Proof of Proposition \ref{2t1}]
1. We begin the proof by establishing some control over the $L^2$-size of $u$ and $\nabla u$ over $\Gamma$. Pair \eqref{2l1} with $u$ to see
\begin{multline}\label{2l2}
\langle P_hu, u\rangle=\|h\nabla u\|^2-\|u\|^2-h^2\langle \pd_n u, u\rangle_{\pd\Omega}=\|h\nabla u\|^2-\|u\|^2\\
\mp ih \langle au, u\rangle_{\Gamma}^2+h^2\langle bu, u\rangle_{\pd\Omega}\pm ih\langle g, u\rangle_{\Gamma}.
\end{multline}
Take the imaginary part of \eqref{2l2} to see
\begin{multline}\label{2l16}
\|u\|_{\Gamma}^2\le a_0^{-1}\abs{-h^{-1}\cim\langle P_h u, u\rangle}+a_0^{-1}\abs{\cre\langle g, u\rangle_{\Gamma}}\\
\le C\epsilon^{-1}h^{-4}\|f\|^2+\epsilon h^2\|u\|^2+C\epsilon^{-1}\|g\|_{\Gamma}^2+\epsilon\|u\|_{\Gamma}^2,
\end{multline}
which turns into 
\begin{equation}\label{2l6}
\|u\|_{\Gamma}^2\le C \epsilon^{-1}h^{-4}\|f\|^2+\epsilon h^2\|u\|^2+C\|g\|_{\Gamma}^2
\end{equation}
after the absorption of $\epsilon$-size boundary term. Take the real part of\eqref{2l2} to see
\begin{multline}\label{2l4}
\abs{\|h\nabla u\|^2-\|u\|^2}\le\abs{\cre\langle P_hu, u\rangle}+\abs{h\cim\langle g, u\rangle_{\Gamma}}+h^2b_1\|u\|_{\Gamma}^2\\
\le C\epsilon^{-1}h^{-2}\|f\|^2+\epsilon h^2\|u\|^2+Ch\|g\|_{\Gamma}^2+Ch\|u\|_{\Gamma}^2\\
\le \epsilon h^2\|u\|^2+C\epsilon^{-1}h^{-2}\|f\|^2+Ch\|g\|_{\Gamma}^2,
\end{multline}
where $C$ depends on $h_0$ and we used the fact that 
\begin{equation}
\|u\|_{\Gamma}^2\le C \epsilon^{-1}a_0^{-2}h^{-3}\|f\|^2+\epsilon h\|u\|^2+Ca_0^{-2}\|g\|_{\Gamma}^2
\end{equation}
via an easy modification of \eqref{2l16}. 

2. Now we claim we have an inequality related to the Rellich-type identity:
\begin{multline}\label{2l5}
\|u\|^2+\frac{1}{4a_0}\int_\Gamma a\abs{h\nabla u}^2\le C\|\sqrt{a}u\|^2_{\Gamma}+C\epsilon^{-1}h^{-2}\|f\|^2+C\|g\|_{\Gamma}^2\\
+\epsilon\|h\nabla_x u\|^2+\|h\pd_y u\|^2+\epsilon h^2\|u\|^2.
\end{multline}
We pair $P_h u$ with $V=-ihx.\nabla_x$ to obtain 
\begin{multline}\label{2l3}
\langle P_h u, -ihx\cdot\nabla_x u\rangle=ih^3\langle \nabla u, \nabla(x\cdot\nabla_x u)\rangle-ih \langle u, x\cdot\nabla_x u\rangle\\
-ih^3\langle \pd_n u, x\cdot\nabla_x u\rangle_{\Gamma}.
\end{multline}
We firstly estimate the first term on the right. Note 
\begin{equation}
-\nabla^*\left(\abs{\nabla u}^2(x, 0)\right)=d\abs{\nabla u}^2+2\cre\left(x^t\cdot\nabla_x\nabla u\cdot\nabla \overline{u} \right)
\end{equation}
and by the divergence theorem we have
\begin{equation}\label{2l40}
\int_\Gamma\abs{\nabla u}^2(x\cdot n_x)=\int_\Omega -\nabla^*\left(\abs{\nabla u}^2(x,0)\right)=d\|\nabla u\|^2+2\cre\langle x^t\cdot\nabla_x\nabla u,\nabla u\rangle
\end{equation}
as $n_x\equiv 0$ on $\Gamma_\pm$. With \eqref{2l40}, the imaginary part of the first term of \eqref{2l3} reads
\begin{multline}
\cim\left(ih^3\langle \nabla u, \nabla(x\cdot\nabla_x u)\rangle\right)=h^3 \|\nabla_x u\|^2+h^3\cre \langle x\cdot\nabla_x\nabla u, \nabla u\rangle\\
=h^3 \|\nabla u\|^2-h^3\|\pd_y u\|^2-\frac{dh^3}{2}\|\nabla u\|^2+\frac{h^3}{2}\int_\Gamma\abs{\nabla u}^2(x\cdot n_x)\\
=-\frac{h^3(d-2)}{2}\|\nabla u\|^2+\frac{h^3}{2}\int_\Gamma\abs{\nabla u}^2(x\cdot n_x)-h^3\|\pd_y u\|^2.
\end{multline}
By noting that the second term on the right hand side of \eqref{2l3} can be simplified via
\begin{equation}
-\langle u, x\cdot\nabla_x u\rangle=\langle x\cdot\nabla_x u,u\rangle+d\|u\|^2-\langle u, (x\cdot n_x)u\rangle_\Gamma
\end{equation}
and we can estimate the imaginary part of \eqref{2l3} by
\begin{multline}
h^{-1}\cim\langle P_hu, -ihx\cdot\nabla_x u\rangle=\frac{d}{2}\|u\|^2+\frac{1}2\int_\Gamma\abs{h\nabla u}^2(x\cdot n_x)-\frac{d-2}2\|h\nabla u\|^2\\
-\frac{1}2\langle u, (x\cdot n_x)u\rangle_\Gamma+\cim \langle \pm au+ihbu, x\cdot h\nabla_x u\rangle_{\Gamma}\mp\cim \langle g, x\cdot h\nabla_x u\rangle_{\Gamma}-\|h\pd_y u\|^2
\end{multline}
and hence
\begin{multline}
\frac{d}{2}\|u\|^2+\frac{1}2\int_\Gamma\abs{h\nabla u}^2(x\cdot n_x)\le\frac{d-2}2\|h\nabla u\|^2
+\frac{1}2\|\sqrt{x\cdot n_x}u\|^2_\Gamma+\epsilon\|h\nabla_x u\|^2\\
+C(1+h^2)\|u\|^2_{\Gamma}+\epsilon\int_{\Gamma} a\abs{h\nabla_x u}^2+C\epsilon^{-1}h^{-2}\|f\|^2+C\epsilon^{-1}\|g\|_{\Gamma}^2+\|h\pd_y u\|^2.
\end{multline}
As $x\cdot n_x\ge c_0$ on $\Gamma$, for small $\epsilon$,
\begin{multline}
\frac{d}{2}\|u\|^2+\frac{c_0}{4}\int_\Gamma \abs{h\nabla u}^2\le\frac{d-2}2\|h\nabla u\|^2
+C\|u\|^2_{\Gamma}\\
+C\epsilon^{-1}h^{-2}\|f\|^2+C\epsilon^{-1}\|g\|_{\Gamma}^2+\|h\pd_y u\|^2+\epsilon\|h\nabla_x u\|^2.
\end{multline}
Bring \eqref{2l4} in and observe
\begin{multline}\label{2l13}
\frac{d}{2}\|u\|^2+\frac{c_0}{4}\int_\Gamma \abs{h\nabla u}^2\le\frac{d-2}2\|u\|^2
+C\|u\|^2_{\Gamma}\\
+C\epsilon^{-1}h^{-2}\|f\|^2+C\epsilon^{-1}\|g\|_{\Gamma}^2+\|h\pd_y u\|^2+\epsilon\|h\nabla_x u\|^2+\epsilon h^2\|u\|^2.
\end{multline}
Absorb the first term on the right to obtain \eqref{2l5}. 

3. Use \eqref{2l5} together with \eqref{2l6} to estimate turn \eqref{2l4} into
\begin{equation}
\|h\nabla u\|^2\le C\epsilon^{-1}h^{-4}\|f\|^2+C\|g\|_{\Gamma}^2
+\epsilon\|h\nabla_x u\|^2+\|h\pd_y u\|^2+\epsilon h^2\|u\|^2.
\end{equation}
Subtract $\|h\pd_y u\|^2$ from both sides to see
\begin{equation}\label{2l11}
(1-\epsilon)\|h\nabla_x u\|^2\le C\epsilon^{-1}h^{-4}\|f\|^2+C\|g\|_{\Gamma}^2+\epsilon h^2\|u\|^2.
\end{equation}
From Lemma \ref{2t2} we have
\begin{equation}
\|u\|^2\le C\epsilon^{-1}h^{-6}\|f\|^2+Ch^{-2}\|g\|_{\Gamma}^2+\epsilon \|u\|^2.
\end{equation}
Absorb the $\epsilon$ term to see the estimate for $\|u\|$. Revisit \eqref{2l4}, \eqref{2l6} and \eqref{2l11} to conclude the proof of the case of the Neumann boundary condition \eqref{2l20} on $\Gamma_\pm$. 

4. For the cases with the homogeneous Dirichlet boundary condition \eqref{2l30} or the homogeneous periodic boundary condition \eqref{2l31}, it suffices to note
\begin{equation}\label{2l24}
\langle \pd_n u, u\rangle_{\Gamma_-\sqcup\Gamma_+}=0, \ \langle \pd_n u, x\cdot \nabla_x u\rangle_{\Gamma_-\sqcup\Gamma_+}=0.
\end{equation}
Then \eqref{2l2} and \eqref{2l3} still hold. The rest is the same. 
\end{proof}
We now have proved the high frequency part where $\abs{\lambda}\ge h_0^{-1}>0$ of Proposition \ref{2t4}(1). In order to obtain its high frequency counterpart in Proposition \ref{2t4}(2), we need an elliptic regularity result. So we put aside the proof of Proposition \ref{2t4} and prove a regularity result first. 
\begin{lemma}[Multiplier Estimate]\label{2t5}
Let $a\in C^{0,\frac12+\delta}(\Gamma)$ for $\delta>0$ on $\Gamma$ compact. Then $a(p)$ is a Sobolev multiplier on $H^{1/2}(\Gamma)$, that is, for each $u\in H^{1/2}(\Gamma)$, we have $au\in H^{1/2}(\Gamma)$. Moreover, there exists $C>0$ independent of $a$ and $u$ such that
\begin{equation}
\|au\|^2_{H^{1/2}}\le 2\|a\|_{L^\infty}^2\|u\|_{H^{1/2}}^2+C\|a\|_{C^{0,1/2+\delta}}^2\|u\|_{L^2}^2.
\end{equation}
In other words, we have $C^{0,\frac12+\delta}(\Gamma)\subset MH^{1/2}(\Gamma)$ in the sense of \cite{ms09}. 
\end{lemma}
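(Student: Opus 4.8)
The plan is to argue directly with the Gagliardo--Slobodeckij characterisation of $H^{1/2}$, rather than by interpolation. The interpolation route $H^{1/2}=[L^2,H^1]_{1/2}$ would force $a$ to be a bounded multiplier on $H^1(\Gamma)$ as well, which fails for a function that is only Hölder of order $1/2+\delta<1$, so one really does need the fractional norm. Since $\Omega$ is $(d+1)$-dimensional, $\Gamma$ is a compact $d$-dimensional Lipschitz manifold; after reducing to a finite atlas with a subordinate partition of unity, $H^{1/2}(\Gamma)$ carries the equivalent norm
\begin{equation*}
\|u\|_{H^{1/2}(\Gamma)}^2 \simeq \|u\|_{L^2(\Gamma)}^2 + [u]_{1/2}^2, \qquad [u]_{1/2}^2 := \int_\Gamma\!\int_\Gamma \frac{\abs{u(p)-u(q)}^2}{d(p,q)^{d+1}}\,dp\,dq,
\end{equation*}
where $d(\cdot,\cdot)$ is distance along $\Gamma$ and $dp$ the surface measure; taking this Gagliardo norm as the realisation of $\|\cdot\|_{H^{1/2}}$ one has the equality $\|u\|_{L^2}^2+[u]_{1/2}^2=\|u\|_{H^{1/2}}^2$, which is what yields the sharp constant $2$.

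Next I would split the product pointwise,
\begin{equation*}
(au)(p)-(au)(q) = a(p)\bigl(u(p)-u(q)\bigr) + \bigl(a(p)-a(q)\bigr)u(q),
\end{equation*}
so that $\abs{(au)(p)-(au)(q)}^2 \le 2\abs{a(p)}^2\abs{u(p)-u(q)}^2 + 2\abs{a(p)-a(q)}^2\abs{u(q)}^2$. Inserting this into $[au]_{1/2}^2$, the first term contributes at most $2\|a\|_{L^\infty}^2[u]_{1/2}^2$. For the second term I would use the Hölder bound $\abs{a(p)-a(q)}\le \|a\|_{C^{0,1/2+\delta}}\,d(p,q)^{1/2+\delta}$, which converts the kernel into $d(p,q)^{1+2\delta}\,d(p,q)^{-(d+1)}=d(p,q)^{2\delta-d}$; performing the $p$-integral first,
\begin{equation*}
2\int_\Gamma\!\int_\Gamma \frac{\abs{a(p)-a(q)}^2\abs{u(q)}^2}{d(p,q)^{d+1}}\,dp\,dq \le 2\|a\|_{C^{0,1/2+\delta}}^2 \int_\Gamma \abs{u(q)}^2 \Bigl(\int_\Gamma d(p,q)^{2\delta-d}\,dp\Bigr)dq.
\end{equation*}
Since $\Gamma$ is compact and its surface measure is $d$-dimensional (Ahlfors-regular), the inner integral is bounded uniformly in $q$ by a constant depending only on $\Gamma$, $d$ and $\delta$: near $q$ it is comparable to $\int_0^{\operatorname{diam}\Gamma} r^{2\delta-d}\,r^{d-1}\,dr=\int_0^{\operatorname{diam}\Gamma} r^{2\delta-1}\,dr<\infty$ because $\delta>0$, and away from $q$ the integrand is bounded. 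Hence the second term is $\le C\|a\|_{C^{0,1/2+\delta}}^2\|u\|_{L^2}^2$. Adding $\|au\|_{L^2}^2\le\|a\|_{L^\infty}^2\|u\|_{L^2}^2$, bounding $\|a\|_{L^\infty}\le\|a\|_{C^{0,1/2+\delta}}$ to absorb this contribution, and using $2\|a\|_{L^\infty}^2[u]_{1/2}^2\le 2\|a\|_{L^\infty}^2\|u\|_{H^{1/2}}^2$, produces exactly the stated inequality, and in particular $a\in MH^{1/2}(\Gamma)$.

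The one point requiring genuine care — the main obstacle in an otherwise mechanical computation — is the passage to local coordinates so that the Gagliardo integral against the surface measure of the Lipschitz manifold $\Gamma$ is legitimate and that measure is $d$-regular (so the kernel carries the correct power $d+1$), together with tracking the normalisation so that the coefficient of $\|u\|_{H^{1/2}}^2$ comes out exactly $2$ rather than a larger absolute constant. Once that framework is in place, the heart of the matter is simply that $d(p,q)^{2\delta-d}$ is locally integrable on $\Gamma$, which is precisely where the Hölder exponent being strictly larger than $1/2$ is used.
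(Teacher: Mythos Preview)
Your argument is correct and follows essentially the same route as the paper: both use the Gagliardo seminorm, the same pointwise splitting $(au)(p)-(au)(q)=a(p)(u(p)-u(q))+(a(p)-a(q))u(q)$, bound the first piece by $2\|a\|_{L^\infty}^2[u]_{1/2}^2$, and reduce the second piece to the local integrability of $d(p,q)^{2\delta-d}$ on the compact $d$-dimensional surface $\Gamma$. The only cosmetic difference is that the paper handles the second piece by an explicit near/far decomposition (H\"older estimate inside $B_\delta(q)$, $L^\infty$ bound outside), whereas you apply the H\"older bound globally and invoke integrability of the resulting kernel directly; your version is slightly more streamlined and your remarks on the chart reduction and Ahlfors regularity of the surface measure make explicit a point the paper leaves implicit.
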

\begin{proof}
One could follow \cite{ms09} to use the Strichartz functions to characterise $MH^{1/2}(\Gamma)$, but here we could show it directly following the expository notes of \cite{npv12}. Denote the Gagliardo seminorm on $H^{1/2}(\Gamma)$ by
\begin{equation}
\|u\|_{\dot{H}^{1/2}}^2=\int_\Gamma\int_\Gamma \frac{\abs{u(p)-u(q)}^2}{d_{\Gamma}(p,q)^{d+1}}dp dq,
\end{equation}
where $d_{\Gamma}$ is the distance function on $\Gamma$ and $dp$, $dq$ are the volume form on $\Gamma$. Note that $\|\cdot\|_{H^{1/2}}$ and $\|\cdot\|_{L^2}+\|\cdot\|_{\dot{H}^{1/2}}$ are equivalent norms on $H^{1/2}(\Gamma)$. Note
\begin{equation}\label{2l35}
\|au\|^2_{\dot{H}^{1/2}}\le 2\int_\Gamma\int_\Gamma \frac{\abs{a(p)u(p)-a(p)u(q)}^2}{d_{\Gamma}(p,q)^{d+1}}+\frac{\abs{a(p)u(q)-a(q)u(q)}^2}{d_{\Gamma}(p,q)^{d+1}} dp dq.
\end{equation}
The first term of \eqref{2l35} is bounded by
\begin{equation}
2\int_\Gamma\int_\Gamma \frac{\abs{a(p)}^2\abs{u(p)-u(q)}^2}{d_{\Gamma}(p,q)^{d+1}} dp dq\le 2\|a\|_{L^\infty}^2\|u\|^2_{\dot{H}^{1/2}}.
\end{equation}
Since $a\in C^{0,1/2+\delta}(\Gamma)$, the half of the second term could be decomposed as
\begin{multline}
\int_\Gamma\int_\Gamma \frac{\abs{u(q)}^2\abs{a(p)-a(q)}^2}{d_{\Gamma}(p,q)^{d+1}} dp dq
\le \|a\|_{C^{0,1/2+\delta}}^2\int_\Gamma\int_{\Gamma\cap B_{\delta}(y)} \frac{\abs{u(q)}^2dp dq}{d_{\Gamma}(p,q)^{d-2\delta}} \\+4\|a\|_{L^{\infty}}^2\int_\Gamma\int_{\Gamma\cap B^c_{\delta}(q)} \frac{\abs{u(q)}^2dp dq}{d_{\Gamma}(p,q)^{d+1}}\le C\|a\|_{C^{0,1/2+\delta}}^2\|u\|_{L^2}^2.
\end{multline}
Note $\|au\|_{L^2}\le \|a\|_{L^\infty}\|u\|_{L^2}$ to conclude the proof. 
\end{proof}
\begin{remark}
This lemma continues to work when $\Gamma$ is no longer compact, if we assume $a\in C^{0,1/2+\delta}\cap L^\infty$.
\end{remark}


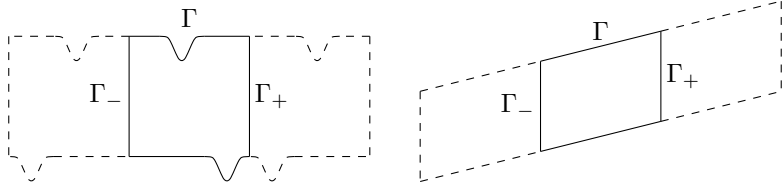
\begin{figure}
\begin{tikzpicture}[minimum size=0.01cm]

\draw (0, -0.8) -- (0, 0.8);
\draw (0, -0.8) -- (1, -0.8);
\draw (1.6, -0.8) -- (1.6, 0.8);
\draw (0.4, 0.8) -- (0, 0.8);
\draw (1,0.8) -- (1.6,0.8);
\node at (-0.3, 0) {$\Gamma_-$};
\node at (1.9, 0) {$\Gamma_+$};
\node at (0.8, 1.05) {$\Gamma$};

\draw plot [smooth] coordinates {(0.4,0.8) (0.5, 0.79) (0.55, 0.73) (0.66, 0.5) (0.74, 0.5) (0.85, 0.73) (0.9, 0.79) (1, 0.8)};
\draw plot [smooth] coordinates {(1.0,-0.8) (1.1, -0.81) (1.15, -0.87) (1.26, -1.1) (1.34, -1.1) (1.45, -0.87) (1.5, -0.81) (1.6, -0.8)};

\draw [dashed] (0, 0.8) -- (-0.4,0.8);
\draw [dashed] (-1, 0.8) -- (-1.6,0.8);
\draw [dashed] (-1.6, 0.8) -- (-1.6,-0.8);
\draw [dashed] (-1, -0.8) -- (0,-0.8);
\draw [dashed] plot [smooth] coordinates {(-0.4,0.8) (-0.5, 0.79) (-0.55, 0.73) (-0.66, 0.5) (-0.74, 0.5) (-0.85, 0.73) (-0.9, 0.79) (-1, 0.8)};
\draw [dashed] plot [smooth] coordinates {(-1.0,-0.8) (-1.1, -0.81) (-1.15, -0.87) (-1.26, -1.1) (-1.34, -1.1) (-1.45, -0.87) (-1.5, -0.81) (-1.6, -0.8)};

\draw [dashed] (3.2, 0.8) -- (2.8,0.8);
\draw [dashed] (2.2, 0.8) -- (1.6,0.8);
\draw [dashed] (3.2, 0.8) -- (3.2,-0.8);
\draw [dashed] (2.2, -0.8) -- (3.2,-0.8);
\draw [dashed] plot [smooth] coordinates {(2.8,0.8) (2.7, 0.79) (2.65, 0.73) (2.54, 0.5) (2.46, 0.5) (2.35, 0.73) (2.3, 0.79) (2.2, 0.8)};
\draw [dashed] plot [smooth] coordinates {(2.2,-0.8) (2.1, -0.81) (2.05, -0.87) (1.94, -1.1) (1.86, -1.1) (1.75, -0.87) (1.7, -0.81) (1.6, -0.8)};
\end{tikzpicture}
\hspace{1.2em}
\begin{tikzpicture}[minimum size=0.01cm]

\draw (0, -0.8) -- (0, 0.4);
\draw (1.6, -0.4) -- (1.6, 0.8);

\draw (0,0.4) -- (1.6, 0.8);
\draw (0,-0.8) -- (1.6, -0.4);

\draw [dashed] (1.6, 0.8) -- (3.2, 1.2);
\draw [dashed] (1.6,-0.4) -- (3.2, 0);

\draw [dashed] (-1.6, 0) -- (0, 0.4);
\draw [dashed] (-1.6, -1.2) -- (0, -0.8);

\draw [dashed] (-1.6, 0) -- (-1.6,-1.2);
\draw [dashed] (3.2, 1.2) -- (3.2,0);

\node at (-0.3, -0.2) {$\Gamma_-$};
\node at (1.9, 0.2) {$\Gamma_+$};
\node at (0.8, 0.8) {$\Gamma$};

\end{tikzpicture}
\caption{Reflection on the left, and periodic extension on the right.} 
\label{f6}
\end{figure}

\begin{proposition}[Classical Elliptic Regularity up to Corners]\label{2t6}
Let $\Omega$ be a compact non-product cylinder defined in Definition \ref{0gcn}. Let $f\in L^2(\Omega)$, $g\in H^{1/2}(\Gamma)$ and $a, b\in C^{0,1/2+\delta}(\Gamma)$ for some $\delta>0$. Impose the damping assumption \ref{0ad}. When $b\not\equiv 0$ identically, for any $\cre{z}\ge 0$ there exists a unique weak solution $u\in H^{1}(\Omega)$ to the equation
\begin{gather}\label{2l36}
(\Delta +z^2)u=f,\text{ on }\Omega,\\
\pd_n u+a z u+bu=g,\text{ on }\Gamma,\\
\label{2l34}
\pd_n u=0,\text{ on }\Gamma_\pm.
\end{gather}
When $b\equiv 0$, the same statement holds for $\{\cre{z}\ge 0, z\neq 0\}$; and at $z=0$, there exists a weak solution in $H^1$ if and only if 
\begin{equation}\label{2l39}
\langle f, 1\rangle+\langle g,1\rangle_{\Gamma}=0,
\end{equation}
and such solution is instead unique modulo constant functions. Moreover, any such solution $u\in H^1(\Omega)$ must be in $H^{2}(\Omega)$. The above are still true if we replace the homogeneous Neumann boundary condition \eqref{2l34} by the homogeneous Dirichlet boundary condition \eqref{2l30} or the periodic boundary condition \eqref{2l31}. 
\end{proposition}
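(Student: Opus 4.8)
The plan is to handle existence and uniqueness together by a Fredholm argument on the weak formulation, and then to upgrade any $H^1$-solution to $H^2$ by local elliptic regularity on each $C^{1,1}$-face of $\pd\Omega$, resolving the codimension-$2$ corners $\pd\Omega(\pm1)\times\{\pm1\}$ by reflection across the caps $\Gamma_\pm$. First I would set up the variational problem: testing \eqref{2l36}--\eqref{2l34} against $v\in H^1(\Omega)$ and using $\pd_n u=0$ on $\Gamma_\pm$, $\pd_n u=g-azu-bu$ on $\Gamma$ produces the sesquilinear form
\[
B_z(u,v)=\langle\nabla u,\nabla v\rangle+z^2\langle u,v\rangle+z\langle au,v\rangle_\Gamma+\langle bu,v\rangle_\Gamma ,
\]
with the equation $B_z(u,v)=\langle f,v\rangle+\langle g,v\rangle_\Gamma$ for all $v$. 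At $z=1$ one has $B_1(u,u)=\|\nabla u\|^2+\|u\|^2+\langle au,u\rangle_\Gamma+\langle bu,u\rangle_\Gamma\ge\|u\|_{H^1(\Omega)}^2$, so Lax--Milgram makes $\mathcal B_1\colon H^1(\Omega)\to H^1(\Omega)^*$ an isomorphism. Since $B_z-B_1$ involves only the pairings $\langle u,v\rangle$ and $\langle au,v\rangle_\Gamma$, which factor through the compact maps $H^1(\Omega)\hookrightarrow L^2(\Omega)$ and $H^1(\Omega)\to H^{1/2}(\Gamma)\hookrightarrow L^2(\Gamma)$, the operator $\mathcal B_z$ is a compact perturbation of $\mathcal B_1$, hence Fredholm of index $0$; its adjoint form is $B_{\bar z}$, so the cokernel is $\ker\mathcal B_{\bar z}$ and everything reduces to showing $\ker\mathcal B_z=0$ (for both $z$ and $\bar z$, which have the same real part).

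For uniqueness I would take $v=u$ in $B_z(u,u)=0$ and write $z=\sigma+i\tau$ with $\sigma\ge0$: the imaginary part gives $\tau\,(2\sigma\|u\|^2+\langle au,u\rangle_\Gamma)=0$ and the real part $\|\nabla u\|^2+(\sigma^2-\tau^2)\|u\|^2+\sigma\langle au,u\rangle_\Gamma+\langle bu,u\rangle_\Gamma=0$, where every summand is nonnegative except the middle one of the second relation. If $\sigma>0$, either the imaginary relation (when $\tau\ne0$) or the real one (when $\tau=0$) forces $\|u\|=0$. If $\sigma=0$ and $\tau\ne0$, the imaginary relation gives $\langle au,u\rangle_\Gamma=0$, hence $\gamma u|_\Gamma=0$ since $a\ge a_0>0$, hence $\pd_n u|_\Gamma=0$ from the boundary condition; then $u$ solves the second-order elliptic equation $(\Delta-\tau^2)u=0$ with $C^{1,1}$ (Lipschitz) principal part and has vanishing Cauchy data along the $C^{1,1}$-portion $\Gamma$, so unique continuation from the boundary forces $u\equiv0$. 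At $z=0$, $B_0(u,u)=\|\nabla u\|^2+\langle bu,u\rangle_\Gamma=0$ makes $u$ constant with $|u|^2\int_\Gamma b=0$: if $b\not\equiv0$ then $u=0$, while if $b\equiv0$ then $\ker\mathcal B_0$ is exactly the constants, $\mathcal B_0$ is self-adjoint, so \eqref{2l39} is the solvability condition and solutions are unique modulo constants. Combined with the Fredholm alternative this gives the asserted dichotomy.

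For the $H^1\Rightarrow H^2$ upgrade, interior regularity is classical, giving $u\in H^2_{\mathrm{loc}}(\Omega)$. Near $\Gamma$ and away from the corners, the impedance condition reads $\pd_n u=g-azu-bu$; since $\gamma u\in H^{1/2}(\Gamma)$, $g\in H^{1/2}(\Gamma)$, and $a,b\in C^{0,1/2+\delta}(\Gamma)$ are multipliers on $H^{1/2}(\Gamma)$ by Lemma~\ref{2t5}, the right-hand side lies in $H^{1/2}(\Gamma)$, so $u$ locally solves a Neumann problem with $L^2$-interior and $H^{1/2}$-boundary data on a $C^{1,1}$-boundary, whence $u\in H^2$ there by the classical elliptic regularity on $C^{1,1}$-domains. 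Near $\overline{\Gamma_\pm}$, including the corners, I would reflect across $\{y=\pm1\}$: the even reflection of $u$ in the Neumann case, the odd reflection for the Dirichlet condition, the periodic identification for \eqref{2l31}. By the hypothesis in Definition~\ref{0gcn} the reflected domain is $C^{1,1}$ near $y=\pm1$, so the former corner disappears; the reflected function solves an elliptic equation with Lipschitz metric and, on the reflected part of the wall, an impedance condition with reflected coefficients $\tilde a,\tilde b\in C^{0,1/2+\delta}$ and data $\tilde g\in H^{1/2}$, so the previous paragraph applies and gives $H^2$ near $y=\pm1$; restricting back gives $u\in H^2$ near $\overline{\Gamma_\pm}$. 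A partition of unity on the compact set $\overline\Omega$ then assembles $u\in H^2(\Omega)$. The Dirichlet and periodic cases on $\Gamma_\pm$ run identically after replacing $H^1(\Omega)$ by the appropriate closed subspace in the first two steps and the even reflection by the odd one or the periodic identification in the third.

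I expect the corner regularity to be the crux: the mixed impedance/Neumann conditions meeting along a codimension-$2$ edge are exactly what force the ``$C^{1,1}$ after reflection'' clause of Definition~\ref{0gcn}, and the multiplier estimate of Lemma~\ref{2t5} is indispensable precisely so that the Robin data stays in $H^{1/2}$ both on $\Gamma$ and after reflection — this is where the $C^{0,1/2+\delta}$-hypothesis on $a,b$ is actually used. A secondary subtlety is the uniqueness on the imaginary axis $z=i\tau$, $\tau\ne0$, where the energy identity alone does not close and one must invoke unique continuation from the $C^{1,1}$-portion $\Gamma$.
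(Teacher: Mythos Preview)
Your argument is correct and follows the paper's overall architecture: a Fredholm alternative on the variational form for existence, the energy identity for uniqueness, and even reflection across $\Gamma_\pm$ to kill the codimension-$2$ corners and reduce to classical $C^{1,1}$ elliptic regularity. The one genuine departure is the case $z=i\tau$, $\tau\neq0$: you deduce $\gamma u|_\Gamma=0$ and $\pd_n u|_\Gamma=0$ from the energy identity and then invoke unique continuation from vanishing Cauchy data on $\Gamma$, whereas the paper instead first establishes the $H^2$-regularity (Step~1) and then feeds the homogeneous $H^2$-solution into the already-proven resolvent estimate of Proposition~\ref{2t1} to conclude $u=0$. Your route is more self-contained in that it does not rely on the quantitative multiplier estimate of that proposition, and since the operator here is the flat Laplacian the unique-continuation step is entirely classical (Holmgren, or the zero-extension trick across $\Gamma$); the paper's route has the virtue of reusing machinery already in place and avoids any appeal to results outside the paper. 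A small ordering difference: the paper proves regularity \emph{before} uniqueness precisely so that Proposition~\ref{2t1} (which needs $u\in H^2$) is available; your ordering is the reverse, which is fine because your unique-continuation argument only needs the weak $H^1$-formulation.
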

\begin{proof}
1. We begin with the regularity for the case of the Neumann boundary condition on $\Gamma_\pm$: assume $u\in H^1$ solves \eqref{2l36}, we claim $u$ is indeed $H^2$. We will use the method of images to remove the singularities at the corners $\Gamma_{-}\cap\Gamma$ and $\Gamma_{+}\cap\Gamma$. Define a longer cylinder $\tilde\Omega\subset\mathbb{R}^d_x\times[-3,3]_y$ by reflecting $\Omega$ past $\Gamma_-$ and $\Gamma_+$, with new boundary $\pd\tilde\Omega=\tilde\Gamma_+\cup\tilde\Gamma_-\cup\tilde\Gamma$ where
\begin{equation}
\tilde\Gamma_\pm=\pd\tilde\Omega\cap\{y=\mp3\}, \ \tilde\Gamma=\pd\tilde\Omega\cap(-3,3)_y.
\end{equation}
See Figure \ref{f6} for illustration. Reflect accordingly $u,a,b,f,g$ to get $\tilde u, \tilde a, \tilde b, \tilde f, \tilde g$. Consider a smooth cutoff function $\chi(y)\equiv 1$ on a small neighbourhood of $[-1,1]$ and supported inside $[-2,2]$. Then $\chi \tilde u$ solves
\begin{gather}
\Delta\chi\tilde u=\chi \tilde f-z^2\tilde u-(\pd_y^2\chi)\tilde u-2(\pd_y\chi)\pd_y\tilde u\in L^2(\tilde\Omega),\\
\pd_n\chi\tilde u=(-\tilde a z-\tilde b)\gamma\chi\tilde u+(\pd_n\chi)\gamma\tilde u\in H^{1/2}(\pd\tilde\Omega).
\end{gather}
Note that $\supp\chi\tilde u\cap\pd\tilde\Omega$ is indeed $C^{1,1}$. From the standard elliptic regularity on $C^{1,1}$ domains, we see $\chi \tilde u$ is indeed in $H^2(\tilde\Omega)$ locally, hence globally. See, for example \cite[Theorem 4.18]{mcl00} for details on the elliptic regularity on $C^{1,1}$-domains. Note that $\chi\equiv 1$ on $\supp u$ to conclude that $u\in H^2(\Omega)$.

2. Uniqueness: if there is a solution $u\in H^1(\Omega)$, it is the unique one in $H^1$. Note that we know $u\in H^2(\Omega)$ indeed, and it then suffices to show that the only $H^2$-solution to the homogeneous problem
\begin{gather}\label{2l37}
(\Delta +z^2)u=0,\text{ on }\Omega,\\
\pd_n u+a z u+bu=0,\text{ on }\Gamma,\\
\pd_n u=0, \text{ on }\Gamma_\pm.
\end{gather}
is trivial. Pair \eqref{2l37} with $u$, use Green's first identity and take the real and imaginary parts to see
\begin{gather}\label{2l38}
\|\nabla u\|^2+((\cre z)^2-(\cim z)^2)\|u\|^2+\cre z\|\sqrt{a}u\|^2_{\pd\Omega}+\|\sqrt{b}u\|^2_{\pd\Omega}=0\\
\cim z(2\cre z\|u\|^2+\|\sqrt{a}u\|_{\pd\Omega}^2)=0 
\end{gather}
In the case of $\cre z>0$, it is easy to deduce that $u=0$ in $L^2$ and hence in $H^1$. When $\cre z=0$ and $\cim z=\lambda\neq 0$, invoke estimate \eqref{2l12} with $h=\abs{\lambda}^{-1}>0$ from Proposition \ref{2t1} to observe $u=0$. When $z=0$, \eqref{2l38} reduces to 
\begin{equation}
\|\nabla u\|^2+\|\sqrt{b}u\|^2_{\pd\Omega}=0
\end{equation}
and hence $\nabla u=0$ and $u=u_0$ a constant on $\Omega$. Then
\begin{equation}
\|u-u_0\|_{\pd\Omega}^2\le C\|u-u_0\|_{H^{1}}^2=C\|u-u_0\|^2+C\|\nabla u-\nabla u_0\|^2=0,
\end{equation}
whence $u=u_0$ almost everywhere on $\pd\Omega$. When $b$ is not identically $0$ on $\pd\Omega$, we have $u_0=0$. 

3. Existence: we claim there exists $u\in H^1(\Omega)$ solving \eqref{2l36}. This is \cite[Theorem 4.11]{mcl00} where we use the coerciveness of the bilinear form 
\begin{equation}
B(u, v)=\langle (\Delta+z^2)u, v\rangle+\langle \pd_n u, v\rangle_\Gamma =\langle \nabla u, \nabla v\rangle+z^2\langle u, v\rangle
\end{equation}
to establish the Fredholm alternative for the problem. The uniqueness of the cases except when $b\equiv 0$ and $z=0$ implies the existence. When $b\equiv 0$ and $z=0$, the only solutions to the homogeneous problem are the constants, and hence we have the existence if and only if \eqref{2l39} holds. 

4. For the homogeneous Dirichlet boundary condition on $\Gamma_\pm$, we make the following modification: in Step 3, we will use the same coercive form but instead on
\begin{equation}
H^1_{\Gamma_\pm}(\Omega)=\{u\in H^1(\Omega): \gamma u|_{\Gamma_\pm}=0\}. 
\end{equation}  
See \cite[Theorem 4.10]{mcl00} for further details. For the periodic boundary condition on $\Gamma_\pm$, in Step 1 use periodic extensions instead of reflections at $\Gamma_\pm$, and in Step 4 use the same coercive form on 
\begin{equation}
H^1_{per}(\Omega)=\{u\in H^1(\Omega): \gamma u|_{\Gamma_-}=\gamma u|_{\Gamma_+}\}. 
\end{equation}
See Figure \ref{f6} for illustration. The rest are the same. 
\end{proof}
With this regularity result, we can prove a stronger high-frequency estimate on compact product cylinders, when $f,g$ are of better regularity than that in Proposition \ref{2t1}:

\begin{proposition}[High Frequency $H^1$-Resolvent Estimate]\label{2t3}
On a compact product cylinder $\Omega$ defined in Definition \ref{0gp}, impose the damping assumption \ref{0ad} and assume further that $a,b,\pd_y a, \pd_y b\in C^{0,1/2+\delta}(\Gamma)$. Consider any solution $u(p;h)\in H^2(\Omega)$ to
\begin{gather}\label{2l8}
P_h u(p;h)=(h^2\Delta-1)u(p;h)=f(p;h), \ p\in\Omega,\\
\left(ih\pd_n \pm a(p)+ihb(p)\right)u(p;h)=g(p;h), \ p\in \Gamma,\\
\label{2l9}
ih\pd_n u(p;h)=0, \ x\in \Gamma_\pm,
\end{gather}
for $f\in H^1(\Omega)$ and $g\in H^{3/2}(\Gamma)$ at each $h$. Then for any $h_0>0$ there is some $C>0$ such that uniformly in $h\in(0, h_0]$ we have
\begin{equation}
\left\|u\right\|_{H_h^1(\Omega)}^2\le C h^{-4}\left\|f\right\|^2_{H^1(\Omega)}+C\|g\|^2_{H^1(\Gamma)}.
\end{equation}
\end{proposition}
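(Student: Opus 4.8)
The idea is to differentiate the system along the flat cylindrical direction $y$, reduce the differentiated problem to Proposition~\ref{2t1}, and then substitute the resulting bound on $h\partial_y u$ into the Rellich-type inequality \eqref{2l5} from the proof of Proposition~\ref{2t1}. The key observation is that \eqref{2l5} already controls $\|u\|^2$ by boundary terms, $h^{-2}\|f\|^2$, and $\|h\partial_y u\|^2$ with an $O(1)$ constant; so once $\|h\partial_y u\|^2$ is under control we never need the Poincaré inequality of Lemma~\ref{2t2}, and it is precisely the use of that inequality that produced the wasteful $h^{-6}$ in Proposition~\ref{2t1}.

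Since $\Omega=\Omega_x\times[-1,1]_y$ is a product, $\partial_y$ commutes with $\Delta$, and on the cylindrical wall $\Gamma$ the conormal $\partial_n=\partial_{n_x}$ is independent of $y$, so $\partial_y$ commutes with $\partial_n$ on $\Gamma$. Hence $w:=h\partial_y u$ solves $P_h w=h\partial_y f$ on $\Omega$, the boundary condition $(ih\partial_n\pm a+ihb)w=\tilde g$ on $\Gamma$ with
\[
\tilde g=h\partial_y g-h(\pm\partial_y a+ih\partial_y b)\gamma u,
\]
and, because $\partial_n u=\pm\partial_y u=0$ on $\Gamma_\pm$, the homogeneous Dirichlet condition $\gamma w|_{\Gamma_\pm}=0$. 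To legitimise these manipulations and to be able to invoke Proposition~\ref{2t1}, I first check $w\in H^2(\Omega)$: one has $w\in H^1(\Omega)$ since $u\in H^2(\Omega)$; $w$ is a weak solution of this boundary value problem, the differentiation in $y$ near the corners $\Gamma\cap\Gamma_\pm$ being justified by the even reflection across $\Gamma_\pm$ used in the proof of Proposition~\ref{2t6}; and $\tilde g\in H^{1/2}(\Gamma)$, because $\partial_y g\in H^{1/2}(\Gamma)$ (as $g\in H^{3/2}(\Gamma)$) and $\gamma u\in H^{3/2}(\Gamma)\subset H^{1/2}(\Gamma)$ is multiplied by $\partial_y a,\partial_y b\in C^{0,1/2+\delta}(\Gamma)$, which are Sobolev multipliers on $H^{1/2}(\Gamma)$ by Lemma~\ref{2t5}. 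Proposition~\ref{2t6} then upgrades $w$ to $H^2(\Omega)$.

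Applying Proposition~\ref{2t1} (in the Dirichlet-on-$\Gamma_\pm$ form) to $w$ gives $\|w\|_{H_h^1(\Omega)}^2\le Ch^{-6}\|h\partial_y f\|_{L^2(\Omega)}^2+Ch^{-2}\|\tilde g\|_{L^2(\Gamma)}^2$. Here $\|h\partial_y f\|_{L^2(\Omega)}^2\le h^2\|f\|_{H^1(\Omega)}^2$, and the trace bound \eqref{2l10} applied to $u$ itself gives $\|\gamma u\|_{L^2(\Gamma)}^2\le Ch^{-4}\|f\|_{L^2(\Omega)}^2+C\|g\|_{L^2(\Gamma)}^2$, so that
\[
\|\tilde g\|_{L^2(\Gamma)}^2\le Ch^2\big(\|g\|_{H^1(\Gamma)}^2+\|\gamma u\|_{L^2(\Gamma)}^2\big)\le Ch^2\|g\|_{H^1(\Gamma)}^2+Ch^{-2}\|f\|_{L^2(\Omega)}^2;
\]
combining, $\|w\|_{H_h^1(\Omega)}^2\le Ch^{-4}\|f\|_{H^1(\Omega)}^2+C\|g\|_{H^1(\Gamma)}^2$, hence in particular $\|h\partial_y u\|_{L^2(\Omega)}^2=\|w\|_{L^2(\Omega)}^2\le Ch^{-4}\|f\|_{H^1(\Omega)}^2+C\|g\|_{H^1(\Gamma)}^2$. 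It now remains to return to \eqref{2l5} for $u$: bound $\|\sqrt a u\|_\Gamma^2\le a_1\|\gamma u\|_{L^2(\Gamma)}^2$ and $\epsilon\|h\nabla_x u\|^2$ by \eqref{2l10} and \eqref{2l11}, bound $h^{-2}\|f\|^2$ and $\|g\|_\Gamma^2$ by $Ch^{-4}\|f\|_{H^1}^2$ and $\|g\|_{H^1(\Gamma)}^2$, bound $\|h\partial_y u\|^2$ by the estimate just obtained, and absorb the $\epsilon h^2\|u\|^2$ terms into the left side after fixing $\epsilon$ small depending on $h_0$; this yields $\|u\|_{L^2(\Omega)}^2\le Ch^{-4}\|f\|_{H^1(\Omega)}^2+C\|g\|_{H^1(\Gamma)}^2$, and then \eqref{2l4} promotes it to the full $H_h^1$-bound. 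For the homogeneous Dirichlet or periodic conditions on $\Gamma_\pm$ (assuming in addition that $f$ satisfies the same condition, as in Remark~\ref{2t7}(3)), the only change is that $w$ carries the homogeneous Neumann, respectively periodic, condition on $\Gamma_\pm$ — in the Dirichlet case because $h^2\partial_y^2 u|_{\Gamma_\pm}=-f|_{\Gamma_\pm}=0$ — which is again covered by Proposition~\ref{2t1}.

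I expect the main obstacle to be the regularity step: establishing that $\partial_y u$ is a genuine $H^2$-solution of the differentiated system right up to the corners $\Gamma\cap\Gamma_\pm$. This is the one place where all the preparatory material is used together — the product structure to differentiate the equation and the conormal, the reflection behind Proposition~\ref{2t6} to pass the caps, and Lemma~\ref{2t5} applied to $\partial_y a,\partial_y b$ to keep $\tilde g$ in $H^{1/2}(\Gamma)$ — and it is where the Hölder hypothesis on $\partial_y a,\partial_y b$ is genuinely needed. Once $w\in H^2(\Omega)$ is secured, the remainder is a direct application of Proposition~\ref{2t1} and of the identities in its proof.
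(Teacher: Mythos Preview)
Your proposal is correct and follows essentially the same approach as the paper: differentiate in $y$, set $w=h\partial_y u$, use Lemma~\ref{2t5} and Proposition~\ref{2t6} to place $w$ in $H^2$, apply Proposition~\ref{2t1} to $w$ (with Dirichlet on $\Gamma_\pm$) together with the trace bound \eqref{2l10} for $u$ to control $\|\tilde g\|_{L^2(\Gamma)}$, and finally feed the resulting bound on $\|h\partial_y u\|^2$ back into the inequalities from the proof of Proposition~\ref{2t1}. The only cosmetic difference is in the last bookkeeping step: the paper combines \eqref{2l11} with the $\|h\partial_y u\|^2$ bound to control $\|h\nabla u\|^2$ and then reads off $\|u\|^2$ from \eqref{2l4}, whereas you go through \eqref{2l5} first to get $\|u\|^2$ and then promote via \eqref{2l4}; both routes are equivalent and avoid Lemma~\ref{2t2}, which is exactly the point.
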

\begin{proof}
Let $w=h\pd_y u\in H^1(\Omega)$ and observe that $w$ satisfies
\begin{gather}
(h^2\Delta-1)w=h\pd_y f, \text{ on }\Omega,\\
\left(ih\pd_n \pm a+ihb\right)w=\tilde g, \text{ on } \Gamma,\\
w=0, \ \text{ on }\Gamma_\pm,
\end{gather}
where
\begin{equation}
\tilde g=h\pd_y g\mp h(\pd_y a) u\mp ih^2(\pd_y b) u.
\end{equation}
Here we used the fact that $\pd_y$ is tangential to $\Gamma$ and hence commutes with the trace operator $\gamma$ on $\Gamma$ almost everywhere, due to the product structure of $\Omega$. Note
\begin{equation}
\|\tilde g\|_{\Gamma}^2\le h^2\|\pd_y g\|_{\Gamma}^2+Ch^2\|u\|_{\Gamma}^2\le Ch^2\|g\|^2_{H^1(\Gamma)}+Ch^{-2}\|f\|^2
\end{equation}
from \eqref{2l10} and 
\begin{multline}
\|\tilde g\|_{H^{1/2}(\Gamma)}^2\le h^2\|\pd_yg\|^2_{H^{1/2}(\Gamma)}+h^2\|(\pd_y a) u\|^2_{H^{1/2}(\Gamma)}+h^{4}\|(\pd_y b) u\|_{H^{1/2}(\Gamma)}^2\\
\le h^2\|\pd_yg\|^2_{H^{1/2}(\Gamma)}+Ch^2\|u\|^2_{H^{1/2}(\Gamma)}
\end{multline}
from the Lemma \ref{2t5} with $\pd_y a, \pd_y b\in C^{0,1/2+\delta}(\Gamma)$. Hence $\tilde g\in H^{1/2}(\Gamma)$ and we invoke Proposition \ref{2t6} to conclude that $w\in H^2(\Omega)$. Therefore we have enough regularity to apply Proposition \ref{2t1} to $w$ to see
\begin{multline}
\|h\pd_y u\|^2=\|w\|^2\le Ch^{-6}\|h\pd_y f\|^2+Ch^{-2}\left(Ch^2\|g\|^2_{H^1(\Gamma)}+Ch^{-2}\|f\|^2\right)\\
\le C h^{-4}\|f\|_{H^1}^2+C\|g\|_{H^1(\Gamma)}^2.
\end{multline}
From \eqref{2l11} and \eqref{2l12} we have
\begin{equation}
\|h\nabla u\|^2\le C h^{-4}\left\|f\right\|_{H^1}^2+C\|g\|^2_{H^1(\Gamma)}
\end{equation}
and hence
\begin{equation}
\|u\|^2\le C h^{-4}\left\|f\right\|_{H^1}^2+C\|g\|^2_{H^1(\Gamma)}
\end{equation}
from \eqref{2l4}. 
\end{proof}
\begin{remark}\label{2t8}
We could prove the same corollary when replacing \eqref{2l9} by the homogeneous Dirichlet boundary condition \eqref{2l30} or the periodic boundary condition \eqref{2l31} on $\Gamma_\pm$, if we impose the same boundary condition on $f$, an assumption which arises naturally in the construction of semigroups in Section 3: see Remark \ref{3t6}. To do so, observe on $\Gamma_\pm$ we have
\begin{equation}
\pd_y^2 u=\Delta_x u-u-f,
\end{equation}
and that $\nabla_x$ is tangential to $\Gamma_\pm$. Then the terms from \eqref{2l24} will read
\begin{equation}
\langle \pd_n w, w\rangle_{\Gamma_-\sqcup\Gamma_+}=h\langle \pd_y^2 u, \pd_n u\rangle_{\Gamma_-\sqcup\Gamma_+}=h\langle \Delta_x u-u-f, \pd_n u\rangle_{\Gamma_-\sqcup\Gamma_+}=0
\end{equation}
and
\begin{equation}
\langle \pd_n w, x\cdot \nabla_x w\rangle_{\Gamma_-\sqcup\Gamma_+}=h\langle \Delta_x u-u-f, x\cdot\nabla_x \pd_n  u\rangle_{\Gamma_-\sqcup\Gamma_+}=0
\end{equation}
due to the boundary conditions on $\Gamma_\pm$. Hence we have restored the Step 4 of the proof of Proposition \ref{2t1} for $w=h\pd_y u$, with those boundary conditions for $u$, and get the corollary. 
\end{remark}

\begin{proof}[Proof of Proposition \ref{2t4}]
1. In the setting of Proposition \ref{2t4}(1), assume \eqref{2l29} is not true. Then there exist $\abs{\lambda_n}\ge \epsilon$ and
\begin{equation}\label{2l15}
\|u_n\|_{L^2}^2+\langle \lambda_n\rangle^{-2}\|\nabla u_n\|_{L^2}^2\equiv 1, \ \|f_n\|=o_n(\langle \lambda_n\rangle^{-1}), \ \|g_n\|_{\Gamma}=o_n(1)
\end{equation}
such that
\begin{gather}\label{2l14}
(\Delta-\lambda_n^2) u_n=f_n, \text{ on }\Omega,\\
i\pd_n u_n+a \lambda_n +ib u_n=g_n, \text{ on }\Gamma.
\end{gather}

2. The high frequency regime: if $\{\lambda_n\}$ is not bounded, assume $\lambda_n\rightarrow \pm\infty$ via a subsequence. Semiclassicalise the equation by setting $h=\abs{\lambda_n}^{-1}\rightarrow 0$:
\begin{gather}
(h^2\Delta-1) u_h=h^2f_h, \text{ on }\Omega,\\
ih\pd_n u_h\pm a u_h+ihb u_h=h g_h, \text{ on }\Gamma.
\end{gather}
Note that $f_h=o_{L^2}(h), g_h=o_{L^2(\Gamma)}(1)$ and from Proposition \ref{2t1} we have
\begin{equation}
0<\epsilon^{-1}\langle \epsilon\rangle \le\left\|u_h\right\|_{H_h^1}^2\le C h^{-6}\left\|h^2f_h\right\|^2+Ch^{-2}\|hg_h\|_{\Gamma}^2=o(1),
\end{equation}
which gives the desired contradiction.

3. The low frequency regime: if $\{\lambda_n\}$ is bounded, via a convergent subsequence we assume $\lambda_n\rightarrow\lambda$, where $\pm \lambda\ge \epsilon$. Fix $h_0=2\abs{\lambda}^{-1}$ and $h=\abs{\lambda_n}^{-1}\in (h_0/4, h_0]$ and semiclassicalise in manner similar to that of Step 2. We have
\begin{multline}
0<\epsilon^{-1}\langle \epsilon\rangle \le\|u_n\|^2+h^2\|\nabla u_n\|^2\le C(h_0/4)^{-2}\|f_n\|^2+C\|g_n\|_{\Gamma}^2\\
=C(h_0/4)^{-2}o_n(h_0^2)+o_n(1)=o_n(1),
\end{multline}
which also gives the contradiction. Hereby we have obtained \eqref{2l29} for $\epsilon>0$.

4. Now we reinforce our assumption to include \eqref{2l33} and deal with the zero frequency regime where $\lambda\rightarrow 0$. Pair \eqref{2l14} with $u$ to see
\begin{equation}
\langle f_n,u_n \rangle=\|\nabla u_n\|^2-\lambda_n^2\|u_n\|^2-i\langle (a\lambda_n+ib)u_n, u_n\rangle_\Gamma+i\langle g_n, u_n\rangle_{\Gamma},
\end{equation}
the real part of which leads to 
\begin{equation}
\|\nabla u_n\|^2+\langle bu_n, u_n\rangle_{\Gamma}\le \lambda_n^2\|u_n\|^2+\|f_n\|\|u_n\|+C\epsilon^{-1}\|g_n\|_{\Gamma}+\epsilon\|u_n\|_{\Gamma}^2.
\end{equation}
Use the fact that $b\ge b_0>0$ to absorb the $\epsilon$-term and we get
\begin{equation}
\|\nabla u_n\|^2+\frac{b_0}2\|u_n\|^2_{\Gamma}\le \lambda_n^2\|u_n\|^2+\|f_n\|\|u_n\|+C\epsilon^{-1}\|g_n\|_{\Gamma}=o_n(1),
\end{equation}
as $\|u_n\|\le 1$ and $\lambda_n\rightarrow 0$. So both $\|\nabla u_n\|^2$ and $\|u_n\|_{\Gamma}^2$ are $o_n(1)$. Now invoke Lemma \ref{2t2} to see $\|u_n\|=o_n(1)$, and therefore
\begin{equation}
1 \equiv \|u_n\|_{L^2}^2+\langle \lambda\rangle^{-2}\|\nabla u_n\|_{L^2}^2=o_n(1),
\end{equation}
contradicting \eqref{2l15}. This concludes the proof of \eqref{2l29} for $\epsilon\ge 0$ in Proposition \ref{2t4}(3).

5. The proof of Proposition \ref{2t4}(2) is similar to that of Proposition \ref{2t4}(2). One needs to invoke Proposition \ref{2t3} instead of Proposition \ref{2t1}. 
\end{proof}

\section{Evolution on Compact Cylinders}
The polynomial decay of energy in Theorem \ref{0t1} is based on the resolvent estimates for the generators of dissipative semigroups. We here set up the evolution problem. Let $X$ be $H^1(\Omega)\oplus L^2(\Omega)$. If $b\equiv 0$ on $\Gamma$, we further restrict $X$ to the set of $(\tilde u,\tilde v)\in H^1(\Omega)\oplus L^2(\Omega)$ that
\begin{equation}\label{3l21}
\int_\Omega \tilde v+\int_{\Gamma}a(x) \tilde u(x, y)=0,
\end{equation}
a codimension-$1$ subspace of $H^1\oplus L^2$. Let the generator be
\begin{equation}\label{3l22}
A=\begin{pmatrix}
0 & \id\\
-\Delta & 0
\end{pmatrix}: D(A)\rightarrow X
\end{equation}
whose domain is 
\begin{equation}\label{3l7}
D(A)=\{(u,v)\in X\cap(H^2(\Omega)\oplus H^1(\Omega)), 
(\pd_n u+a v+b u)|_\Gamma=0, \ \pd_n u|_{\Gamma_\pm}=0 \}.
\end{equation}
Note $D(A)$ is dense in $X$. Define the energy seminorm $E$ on $X$ via 
\begin{equation}\label{3l23}
\|(u,v)\|_E^2=\|\nabla u\|_{L^2(\Omega)}^2+\|v\|_{L^2(\Omega)}^2+\|\sqrt{b}u\|_{L^2(\Gamma)}^2.
\end{equation}
We refer to \cite{lr97} and \cite{nis13} for further details on those semigroups on $C^{1,1}$-manifolds. 


\begin{proposition}\label{3t1}
On a compact non-product cylinder $\Omega$ defined in Definition \ref{0gcn}, under the damping assumption \ref{0ad}, the energy seminorm $E$ is a norm on $X$ and is equivalent to the $H^1\oplus L^2$-norm. Furthermore when $a,b\in C^{0,1/2+\delta}(\Gamma)$, the generator $A$ is a maximally dissipative with respect to the energy norm $E$. Hence $A$ generates a strongly continuous semigroup $e^{tA}$ which is a contraction.
\end{proposition}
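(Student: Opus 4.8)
The plan is to establish the three assertions in turn: that $E$ is a norm on $X$ equivalent to the $H^1(\Omega)\oplus L^2(\Omega)$-norm; that $A$ is dissipative; and that $\ran(\lambda-A)=X$ for every $\lambda>0$. The final assertion of the proposition then follows from the Lumer--Phillips theorem, since $D(A)$ is dense in $X$. The bound $\|(u,v)\|_E\le C\|(u,v)\|_{H^1\oplus L^2}$ is immediate from the trace theorem and $b\in L^\infty(\Gamma)$. For the converse I would argue by compactness: were it false, there would be $(u_n,v_n)\in X$ with $\|(u_n,v_n)\|_{H^1\oplus L^2}=1$ and $\|(u_n,v_n)\|_E\to0$, whence $v_n\to0$ and $\nabla u_n\to0$ in $L^2(\Omega)$, $\|\sqrt{b}u_n\|_{L^2(\Gamma)}\to0$, and $\|u_n\|_{H^1}\to1$. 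By Rellich a subsequence converges in $L^2(\Omega)$, and since $\nabla u_n\to0$ the convergence improves to $H^1(\Omega)$ with limit a nonzero constant $u$. Then $\abs{u}^2\int_\Gamma b=0$ forces $b\equiv0$ a.e.\ on $\Gamma$; but in that case $X$ carries the constraint \eqref{3l21}, and passing to the limit there (using the trace theorem on the boundary integral) yields $u\int_\Gamma a=0$, contradicting $\int_\Gamma a\ge a_0\abs{\Gamma}>0$. The same argument applied to $\|(u,v)\|_E=0$ shows $E$ is genuinely a norm.

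For dissipativity, take $(u,v)\in D(A)$; since $A(u,v)=(v,-\Delta u)$,
\begin{equation*}
\langle A(u,v),(u,v)\rangle_E=\langle\nabla v,\nabla u\rangle+\langle-\Delta u,v\rangle+\langle\sqrt{b}v,\sqrt{b}u\rangle_\Gamma .
\end{equation*}
Green's identity, in the sign convention used throughout the paper, gives $\langle-\Delta u,v\rangle=-\langle\nabla u,\nabla v\rangle+\langle\pd_n u,v\rangle_{\pd\Omega}$, and on $\pd\Omega$ the domain conditions give $\pd_n u=0$ on $\Gamma_\pm$ and $\pd_n u=-(av+bu)$ on $\Gamma$. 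Hence
\begin{equation*}
\langle A(u,v),(u,v)\rangle_E=(\langle\nabla v,\nabla u\rangle-\langle\nabla u,\nabla v\rangle)-\|\sqrt{a}v\|^2_{L^2(\Gamma)}+(\langle bv,u\rangle_\Gamma-\langle bu,v\rangle_\Gamma),
\end{equation*}
where the first and third bracketed quantities are purely imaginary, so $\cre\langle A(u,v),(u,v)\rangle_E=-\|\sqrt{a}v\|^2_{L^2(\Gamma)}\le0$ and $A$ is dissipative.

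For maximality, fix $\lambda>0$ and $(f,g)\in X$ and solve $(\lambda-A)(u,v)=(f,g)$. Writing $v=\lambda u-f$, this reduces to
\begin{equation*}
(\Delta+\lambda^2)u=g+\lambda f\ \text{on}\ \Omega,\qquad \pd_n u+(a\lambda+b)u=af\ \text{on}\ \Gamma,\qquad \pd_n u=0\ \text{on}\ \Gamma_\pm .
\end{equation*}
Here $g+\lambda f\in L^2(\Omega)$, and $af|_\Gamma\in H^{1/2}(\Gamma)$ by the trace theorem and the Sobolev-multiplier property of Lemma \ref{2t5}, using $a\in C^{0,1/2+\delta}(\Gamma)$. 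Since $\cre\lambda=\lambda>0$, Proposition \ref{2t6} provides a unique $u\in H^1(\Omega)$, which moreover lies in $H^2(\Omega)$; then $v=\lambda u-f\in H^1(\Omega)$ and the boundary conditions in \eqref{3l7} hold by construction, so $(u,v)\in D(A)$ as soon as $(u,v)\in X$. Membership in $X$ is automatic unless $b\equiv0$; in that case I would integrate the Helmholtz equation over $\Omega$, apply the divergence theorem with $\pd_n u=af-a\lambda u$ on $\Gamma$ and $\pd_n u=0$ on $\Gamma_\pm$, and then use the constraint $\int_\Omega g+\int_\Gamma af=0$ valid for $(f,g)\in X$, obtaining $\lambda\int_\Omega u+\int_\Gamma au=\int_\Omega f$, which is precisely \eqref{3l21} for $(u,v)$. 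Hence $\ran(\lambda-A)=X$ for all $\lambda>0$, and the Lumer--Phillips theorem yields that $A$ is maximally dissipative and generates a strongly continuous contraction semigroup on $(X,\|\cdot\|_E)$.

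I expect the main obstacle to be not any single estimate but the bookkeeping in the degenerate case $b\equiv0$: verifying that the solution produced by Proposition \ref{2t6} actually lands in the codimension-one subspace $X$, and checking that the boundary datum $af$ has the $H^{1/2}(\Gamma)$-regularity required to invoke Proposition \ref{2t6} --- which is exactly where the Hölder hypothesis on $a$ and Lemma \ref{2t5} enter. The remaining ingredients are routine: Green's identity, Rellich's compactness theorem, and the Lumer--Phillips theorem.
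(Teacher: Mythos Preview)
Your proof is correct and essentially matches the paper's: the dissipativity computation, the reduction of surjectivity of $\lambda-A$ to the Robin--Helmholtz problem handled by Proposition~\ref{2t6}, and the verification of the constraint \eqref{3l21} when $b\equiv0$ are all the same. The only minor deviation is in the norm-equivalence step, where the paper first shows $E$ is point-separating and then invokes the open mapping theorem for the bounded bijection $(X,\|\cdot\|_{H^1\oplus L^2})\to(X,\|\cdot\|_E)$, while you run a direct Rellich--compactness contradiction argument; both are standard and yours is arguably more self-contained.
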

\begin{proof}
1. We begin with showing the seminorm $E$ is point-separating. Let 
\begin{equation}
\|(u,v)\|_E^2=\|\nabla u\|^2+\|v\|^2+\|\sqrt{b}u\|_{\Gamma}^2=0.
\end{equation}
We have $v\equiv 0$ and $u\equiv u_0$, a constant almost everywhere on $\Omega$. Then
\begin{equation}
\|u-u_0\|_{\pd\Omega}^2\le C\|u-u_0\|_{H^{1}}^2=C\|u-u_0\|^2+C\|\nabla u-\nabla u_0\|^2=0,
\end{equation}
whence $u\equiv u_0$ almost everywhere on $\pd\Omega$. When $b$ is not identically $0$ on $\Gamma$, we have $u_0=0$. When $b\equiv 0$ on $\pd\Omega$, \eqref{3l21} and the fact $a>a_0$ almost everywhere on $\Gamma$ implies $u_0=0$. So we have $(u,v)=0\in H^1\times L^2$. So $E$ is point separating, hence a norm. Note $\|(u,v)\|_E\le \|(u,v)\|_{H^1\times L^2}$, so the inclusion $X\hookrightarrow E$ is bijective and bounded, and is then an open map and two norms are equivalent. 

2. We claim $\id-A$ is bijective from $D(A)$ onto $X$. It suffices to show for any $(\tilde u, \tilde v)$ in $X$, we have
\begin{equation}\label{3l35}
(\id-A)\begin{pmatrix}
u\\
v
\end{pmatrix}=
\begin{pmatrix}
u-v\\
\Delta u+v
\end{pmatrix}=
\begin{pmatrix}
\tilde u\\
\tilde v
\end{pmatrix},
\end{equation}
for unique $(u,v)\in D(A)$. By taking $v=u-\tilde u$ we have a Robin boundary value problem for the Helmholtz equation:
\begin{gather}
\Delta u+u=\tilde u+\tilde v\in L^2(\Omega),\\
\label{3l36}
\pd_n u+a u+bu=a \tilde u\in L^2(\Gamma),\\
\pd_n u=0,\text{ on }\Gamma_\pm.
\end{gather}
When $b\not\equiv 0$, $X=H^1\oplus L^2$ and by Proposition \ref{2t6} we know there exists a unique $u\in H^2$ that solves the problem, and hence $(u, u-\tilde u)$ is the unique one in $D(A)$ to be sent by $A$ to $(\tilde u, \tilde v)$. When $b\equiv 0$, $X$ is a codimension-1 subspace of $H^1\oplus L^2$, and we have in addition to the regularity of $\tilde u, \tilde v$, that 
\begin{equation}
\langle \tilde v, 1\rangle+\langle a\tilde u, 1\rangle_{\Gamma}=0
\end{equation}
by the assumption \eqref{3l21}. We also need to verify this extra property for $u,v$ to claim $(u,v)\in D(A)$. Note in this case we have
\begin{equation}
\langle \tilde v, 1\rangle=\langle \Delta u+v, 1\rangle=\langle v,1\rangle-\langle \pd_n u, 1\rangle_{\pd\Omega}=\langle v,1\rangle+\langle a u, 1\rangle_{\Gamma}-\langle a \tilde u, 1\rangle_{\Gamma},
\end{equation}
from \eqref{3l35} and \eqref{3l36}. This implies
\begin{equation}
\langle v, 1\rangle+\langle a u, 1\rangle_{\Gamma}=\langle \tilde v, 1\rangle+\langle a\tilde u, 1\rangle_{\Gamma}=0,
\end{equation}
as claimed. The rest is the same as for the case when $b\not\equiv 0$. 

3. It is straightforward to compute
\begin{equation}\label{3l5}
\cre{\langle A(u,v), (u,v)\rangle_E}=-\int_{\Gamma} a(x, y)\abs{v}^2\le 0,
\end{equation} 
which demonstrates the dissipative nature of $A$. 
\end{proof}

\begin{remark}\label{3t6}
We give brief remarks on modifications of the semigroups for the Dirichlet boundary condition and the periodic boundary condition on $\Gamma_\pm$. Note that we always keep the impedance boundary condition on $\Gamma$. 
\begin{enumerate}[wide]
\item Consider the periodic boundary condition \eqref{1l17} on $\Gamma_\pm$. 
Let
\begin{equation}\label{3l26}
X=\{(\tilde u,\tilde v)\in H^1(\Omega)\oplus L^2(\Omega): \gamma \tilde u|_{\Gamma_-}= \gamma \tilde u|_{\Gamma_+}\}
\end{equation}
and if $b\equiv 0$ on $\Gamma$ we further restrict $X$ from the above-mentioned set to
\begin{equation}
\int_\Omega \tilde v+\int_{\Gamma}a(x) \tilde u(x, y)=0.
\end{equation}
And let
\begin{multline}
D(A)=X\cap\{(u,v)\in H^2(\Omega)\oplus H^1(\Omega): \gamma \tilde u|_{\Gamma_-}= \gamma \tilde u|_{\Gamma_+}, \gamma \tilde v|_{\Gamma_-}= \gamma \tilde v|_{\Gamma_+}, \\
\gamma(\pd_n u+av+b u)|_{\Gamma}\equiv 0\}
\end{multline}
with the energy norm 
\begin{equation}\label{3l24}
\|(u,v)\|_E^2=\|\nabla u\|_{L^2(\Omega)}^2+\|v\|_{L^2(\Omega)}^2+\|\sqrt{b}u\|_{L^2(\Gamma)}^2.
\end{equation}
Proposition \ref{3t1} holds with infinitesimal changes. 

\item Consider Dirichlet boundary condition \eqref{1l18}. No matter whether $b$ vanishes identically or not, let
\begin{equation}\label{3l27}
X=\{(\tilde u,\tilde v)\in H^1(\Omega)\oplus L^2(\Omega): \gamma \tilde u|_{\Gamma_\pm}\equiv 0\}
\end{equation}
and 
\begin{multline}
D(A)=X\cap\{(u,v)\in H^2(\Omega)\oplus H^1(\Omega): \gamma u|_{\Gamma_-\sqcup\Gamma_+}\equiv 0, \gamma v|_{\Gamma_-\sqcup\Gamma_+}\equiv 0, \\
\gamma(\pd_n u+av+b u)|_{\Gamma}\equiv 0\}
\end{multline}
with the energy norm \eqref{3l24}. Proposition \ref{3t1} still holds from the observation that $u_0\equiv 0$ on $\Gamma_\pm$. 
\end{enumerate}

\end{remark}

\begin{proposition}\label{3t2}
On a compact non-product cylinder $\Omega$ defined in Definition \ref{0gcn}, impose the damping assumption \ref{0ad} and assume $a,b\in C^{0,1/2+\delta}(\Gamma)$. Then the generator $A:D(A)\rightarrow X$ has a discrete spectrum, no element of which is purely imaginary. Furthermore, there exists $\epsilon>0$ that for $\abs{\lambda}\ge \epsilon$ we have
\begin{equation}\label{3l8}
\|(A+i\lambda)^{-1}\|_{E\rightarrow E}\le C\abs{\lambda}^3.
\end{equation}
If we further assume $\Omega$ is a compact product cylinder defined in Definition \ref{0gp}, we have
\begin{equation}\label{3l9}
\|(A+i\lambda)^{-1}\|_{E\rightarrow E}\le C\abs{\lambda}^2,
\end{equation}
which improves \eqref{3l8}. 
\end{proposition}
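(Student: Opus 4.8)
The plan is to deduce all three assertions from the interior impedance estimates of Proposition~\ref{2t4}, using in addition the elliptic well-posedness of Proposition~\ref{2t6} and the norm equivalence of Proposition~\ref{3t1}. First, the spectrum: since $\Omega$ is a bounded Lipschitz domain, $D(A)\subset H^2(\Omega)\oplus H^1(\Omega)$ embeds compactly into $X\subset H^1(\Omega)\oplus L^2(\Omega)$ by Rellich's theorem, and $\id-A\colon D(A)\to X$ is a bijection by Proposition~\ref{3t1}; hence $A$ has compact resolvent, so $\sigma(A)$ is discrete and consists of eigenvalues. To rule out purely imaginary eigenvalues, suppose $(A+i\lambda)(u,v)=0$ with $\lambda\in\setr$ and $(u,v)\in D(A)$. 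Then $v=-i\lambda u$ and $u$ solves the homogeneous interior impedance problem $(\Delta-\lambda^2)u=0$ on $\Omega$, $\pd_n u-ia\lambda\gamma u+b\gamma u=0$ on $\Gamma$, $\pd_n u=0$ on $\Gamma_\pm$, which is the $z=-i\lambda$ instance of Proposition~\ref{2t6}; for $\lambda\neq0$ its uniqueness part (resting at $\cre z=0$ on estimate~\eqref{2l12}) forces $u=0$, while for $\lambda=0$ one argues directly, since $A(u,v)=0$ forces $v=0$ and, pairing $\Delta u=0$ with $u$, also $\nabla u=0$ and $\|\sqrt{b}u\|_{L^2(\Gamma)}=0$, so $u$ is constant and is then killed either by $b\not\equiv0$ or by the codimension-one constraint built into $X$. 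Thus $(A+i\lambda)^{-1}$ is bounded on $X$ for every $\lambda\in\setr$.

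For the bound~\eqref{3l8} on a compact non-product cylinder, given $(f_1,f_2)\in X$ put $(u,v)=(A+i\lambda)^{-1}(f_1,f_2)\in D(A)$, so that $v=f_1-i\lambda u$ and $u\in H^2(\Omega)$ solves
\[
(\Delta-\lambda^2)u=i\lambda f_1-f_2\ \text{on}\ \Omega,\qquad i\pd_n u+a\lambda\gamma u+ib\gamma u=-ia\gamma f_1\ \text{on}\ \Gamma,
\]
with $\pd_n u=0$ on $\Gamma_\pm$; this is the impedance system~\eqref{2l25}--\eqref{2l27} with $f=i\lambda f_1-f_2$ and $g=-ia\gamma f_1$. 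By Proposition~\ref{3t1} and the trace theorem, $\|f\|_{L^2(\Omega)}\le C\langle\lambda\rangle\|(f_1,f_2)\|_E$ and $\|g\|_{L^2(\Gamma)}\le C\|f_1\|_{H^1(\Omega)}\le C\|(f_1,f_2)\|_E$, so~\eqref{2l29} gives, for $\abs{\lambda}\ge\epsilon$, $\|u\|_{L^2(\Omega)}\le C\abs{\lambda}^2\|(f_1,f_2)\|_E$ and $\|\nabla u\|_{L^2(\Omega)}\le C\abs{\lambda}^3\|(f_1,f_2)\|_E$. Then $\|v\|_{L^2(\Omega)}\le\|f_1\|_{L^2}+\abs{\lambda}\|u\|_{L^2}\le C\abs{\lambda}^3\|(f_1,f_2)\|_E$ and, by the trace theorem, $\|\sqrt{b}u\|_{L^2(\Gamma)}\le C\|u\|_{H^1(\Omega)}\le C\abs{\lambda}^3\|(f_1,f_2)\|_E$; recombining the three terms in~\eqref{3l23} yields $\|(u,v)\|_E\le C\abs{\lambda}^3\|(f_1,f_2)\|_E$, which is~\eqref{3l8}.

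For the improvement~\eqref{3l9} on a compact product cylinder one cannot simply feed the data above into the gained estimate~\eqref{2l32}: the volume term contains $f_2\in L^2(\Omega)$ but not $H^1$, and the boundary term $-ia\gamma f_1$ lies only in $H^{1/2}(\Gamma)$ (by Lemma~\ref{2t5}, as $a$ is merely Hölder), so its $H^1(\Gamma)$ norm need not be finite. The fix is to split $u=u_1+u_2+u_3$ by linearity, where $u_1$, $u_2$, $u_3$ solve the impedance system with sources $(i\lambda f_1,0)$, $(-f_2,0)$, $(0,-ia\gamma f_1)$ respectively; each exists uniquely in $H^2(\Omega)$ by Proposition~\ref{2t6}, and $u=u_1+u_2+u_3$ by uniqueness. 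Only $u_1$ has genuinely $H^1$ data, $\|i\lambda f_1\|_{H^1(\Omega)}\le C\abs{\lambda}\|(f_1,f_2)\|_E$, so Proposition~\ref{2t4}(2) (whose extra hypotheses $\pd_y a,\pd_y b\in C^{0,1/2+\delta}$ we carry over in the product case) gives $\|u_1\|_{L^2}\le C\abs{\lambda}\|(f_1,f_2)\|_E$ and $\|\nabla u_1\|_{L^2}\le C\abs{\lambda}^2\|(f_1,f_2)\|_E$; Proposition~\ref{2t4}(1) applied to $u_2$ gives the same bounds, and applied to $u_3$ (boundary source only, hence of size $O(1)$) gives $\|u_3\|_{L^2}\le C\|(f_1,f_2)\|_E$ and $\|\nabla u_3\|_{L^2}\le C\abs{\lambda}\|(f_1,f_2)\|_E$. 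Adding up, $\|u\|_{L^2}\le C\abs{\lambda}\|(f_1,f_2)\|_E$ and $\|\nabla u\|_{L^2}\le C\abs{\lambda}^2\|(f_1,f_2)\|_E$, whence $\|v\|_{L^2}\le C\abs{\lambda}^2\|(f_1,f_2)\|_E$ and $\|\sqrt{b}u\|_{L^2(\Gamma)}\le C\|u\|_{H^1(\Omega)}\le C\abs{\lambda}^2\|(f_1,f_2)\|_E$, giving $\|(u,v)\|_E\le C\abs{\lambda}^2\|(f_1,f_2)\|_E$, i.e.~\eqref{3l9}.

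I expect the main obstacle to be exactly this last maneuver: the resolvent equation manufactures boundary data $-ia\gamma f_1$ whose regularity is capped at $H^{1/2}(\Gamma)$ by the low regularity of the damping, so the $H^1$-type interior impedance estimate cannot be applied to the full problem and one is forced into the three-way decomposition, isolating the single genuinely-$H^1$ piece $i\lambda f_1$ and disposing of the remainder with the cruder $L^2$-estimate~\eqref{2l29}. Everything else — the compactness and no-eigenvalue argument, and the bookkeeping of the powers of $\abs{\lambda}$ through~\eqref{2l29}/\eqref{2l32} together with the trace estimates needed to reconstitute $\|(u,v)\|_E$ from $u$ — should be routine.
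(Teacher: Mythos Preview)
Your treatment of the discreteness of the spectrum, the absence of purely imaginary eigenvalues, and the cubic bound \eqref{3l8} is correct, and is in fact a cleaner direct argument than the paper's contradiction/quasimode route for that part: both rest on the same ingredient, namely the $L^2$-impedance estimate \eqref{2l29}, but you feed the resolvent data in directly rather than normalising a putative quasimode sequence.

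The gap is in your proof of \eqref{3l9}. You correctly identify the obstacle (the boundary datum $-ia\gamma f_1$ is only $H^{1/2}$) and your three-way splitting is a sensible way to isolate the genuinely $H^1$ source. But when you apply Proposition~\ref{2t4}(2) to $u_1$, that estimate carries the extra hypotheses $\pd_y a,\pd_y b\in C^{0,1/2+\delta}(\Gamma)$, and those are \emph{not} assumed in Proposition~\ref{3t2}: only $a,b\in C^{0,1/2+\delta}$ is. Your parenthetical ``we carry over in the product case'' is therefore adding a hypothesis rather than proving the statement; under the stated assumptions, $\pd_y a$ may fail to exist pointwise, and Proposition~\ref{2t4}(2) simply does not apply to the impedance problem with coefficients $(a,b)$.

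The paper's missing idea is a monotonicity step. In the quasimode formulation one first uses the dissipativity identity \eqref{3l5} to get $\|\sqrt{a}\,v_n\|_{L^2(\Gamma)}^2=o(|\lambda_n|^{-2})$, hence $\|u_n\|_{L^2(\Gamma)}=o(|\lambda_n|^{-2})$. One then \emph{replaces} the damping $(a,b)$ in the boundary operator by constants $(\underline a,\underline b)=(a_0,0)$, which trivially have $\pd_y\underline a,\pd_y\underline b\in C^{0,1/2+\delta}$; the discrepancy shows up as extra boundary data $\mp(a-\underline a)\lambda_n u_n-i(b-\underline b)u_n$, which is $o_{L^2(\Gamma)}(|\lambda_n|^{-1})$ precisely because of the a~priori smallness of $\|u_n\|_\Gamma$. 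After this swap, the $H^1$-estimate \eqref{2l32} is available for the $(\underline a,\underline b)$-impedance problem and the argument closes. In your direct approach there is no obvious analogue of this trace smallness (the dissipativity bound becomes $\|\sqrt{a}\,v\|_\Gamma^2\le\|(f_1,f_2)\|_E\|(u,v)\|_E$, which is circular), so the monotonicity replacement does not go through without further work. This is exactly the point emphasised in Remark~\ref{3t4}(1).
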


\begin{proof}
1. We begin by showing the spectrum is discrete. As $M$ is compact, by the Rellich theorem we have $D(A)\hookrightarrow X$ compactly, and $\id-A$ is surjective from $D(A)$ to $X$. So $(\id-A)^{-1}$ is compact and $\operatorname{Spec}(A)$ is discrete. 

2. We will show $0\notin \operatorname{Spec}(A)$. Since it has a discrete spectrum, the operator has $0$ in the spectrum only if $0$ is an eigenvalue. Indeed when $A(u,v)=0$ we have $v=0, \Delta u=0$ and
\begin{equation}
\|u\|_E^2=\|\nabla u\|^2+\|v\|^2+\|\sqrt{b}u\|^2_{\Gamma}=0.
\end{equation}
This implies $(u,v)=0$. So $0$ is not in the spectrum of $A$, whose closedness further implies the existence of $\epsilon>0$ that $i(-2\epsilon, 2\epsilon)\cap\operatorname{Spec}(A)$ is empty.

3. We now assume on a compact product cylinder, \eqref{3l9} is not true, and there are $\pm\lambda_n\ge \epsilon$ and $(u_n, v_n)\in D(A)$ with
\begin{equation}\label{3l2}
\|(u_n, v_n)\|_E^2=\|\nabla u_n\|^2+\|v_n\|^2+\|\sqrt{b}u_n\|^2_{\Gamma}\equiv 1
\end{equation}
such that 
\begin{equation}\label{3l17}
(A\pm i\lambda_n)(u_n, v_n)=o_E(\abs{\lambda_n}^{-2})=o_X(\abs{\lambda_n}^{-2}), 
\end{equation}
which is
\begin{gather}
\pm i\lambda_n u_n+v_n=o_{H^1}(\abs{\lambda_n}^{-2})\\
-\Delta u_n\pm i\lambda_n v_n=o_{L^2}(\abs{\lambda_n}^{-2}),
\end{gather}
implying
\begin{gather}\label{3l6}
v_n=\mp i\lambda_n u_n+o_{H^1}(\abs{\lambda_n}^{-2})\\
\label{3l13}
\Delta u_n-\lambda_n^2 u_n=o_{L^2}(\abs{\lambda_n}^{-2})+o_{H^1}(\abs{\lambda_n}^{-1}).
\end{gather}
Note $\|u_n\|\le C$ as $(u_n,v_n)$ is also bounded in $H^1\oplus L^2$. Now \eqref{3l5} implies
\begin{multline}\label{3l19}
\|\sqrt{a}v_n\|^2_{\Gamma}= -\cre\langle A(u,v),(u,v)\rangle_E\\
=-\cre\langle (A\pm i\lambda_n)(u,v),(u,v)\rangle_E=o(\abs{\lambda_n}^{-2})
\end{multline}
and from \eqref{3l6} we have
\begin{equation}\label{3l14}
\|u_n\|_{\Gamma}\le a_0^{-1}\abs{\lambda_n}^{-1}\|\sqrt{a}v_n\|_{\pd\Omega}+o(\abs{\lambda_n}^{-3})=o(\abs{\lambda_n}^{-2}).
\end{equation}
Pair \eqref{3l13} with $u_n$ and take the real part to see via \eqref{3l7} that
\begin{multline}\label{3l1}
\|\nabla u_n\|^2+\|\sqrt{b}u_n\|_{\Gamma}^2 -\abs{\lambda_n}^2\|u_n\|^2=-\cre{\langle a v_n, u_n\rangle}_{\pd\Omega}+o(\abs{\lambda_n}^{-1})\|u_n\|\\
=o(\abs{\lambda_n}^{-3})+o(\abs{\lambda_n}^{-1})=o(\abs{\lambda_n}^{-1}).
\end{multline}
And we have the system
\begin{gather}
\label{3l16}
\Delta u_n-\lambda_n^2 u_n=o_{L^2}(\abs{\lambda_n}^{-2})+o_{H^1}(\abs{\lambda_n}^{-1}),\\
\label{3l15}
i\pd_n u_n\pm a\lambda_n u_n+ibu_n=a o_{H^{1/2}(\Gamma)}(\abs{\lambda_n}^{-2}).
\end{gather}

4. We will replace the quasimodes damped by $a$ and $b$, by those damped by some smaller but more regular damping functions $\underline{a}$ and $\underline{b}$, constituting a monotonicity argument. Consider the replacement damping functions $\underline{a}(p),\underline{b}(p)\in C^{0,1/2+\delta}(\Gamma)\cap L^\infty(\Gamma)$ that
\begin{gather}
0< a_0\le \underline{a}(p)\le a(p), \ p\in \Gamma,\\
0\le \underline{b}(p)\le b(p), \ p\in \Gamma,\\
\pd_y\underline{a}(x,y),\pd_y\underline{b}(x, y)\in C^{0,1/2+\delta}(\Gamma)\cap L^\infty(\Gamma).
\end{gather}
The existence of the replacement damping functions can be easily shown by taking $\underline{a}=a_0$ on $\Gamma$, and $\underline{b}=0$. Note that the replacement damping functions are more regular than the original ones. Now consider from \eqref{3l15} we have on $\Gamma$ that
\begin{equation}
i\pd_n u_n\pm \underline{a}\lambda_n u_n+i\underline{b}u_n=\mp\lambda_n(a-\underline{a})u_n-i(b-\underline{b})u_n+a o_{H^{1/2}(\Gamma)}(\abs{\lambda_n}^{-2})=g_n.
\end{equation}
Keep in mind $\{g_n\}\subset H^{1/2}(\Gamma)$, though they are not uniformly $H^{1/2}(\Gamma)$-bounded. Note as $0\le a-\underline{a}\le a\le C\underline{a}$ and $0\le b-\underline{b}\le (b_1/a_0)a\le C\underline{a}$ for some $C\ge 1$, we have from \eqref{3l14} that
\begin{equation}
(a-\underline{a})u_n=\underline{a}\bigo_{\Gamma}(u_n)=\underline{a}o_{\Gamma}(\abs{\lambda_n}^{-2})
\end{equation}
and
\begin{equation}
(b-\underline{b})u_n=b_1/a_0\bigo_{\Gamma}(au_n)=\underline{a}\bigo_{\Gamma}(u_n)=\underline{a}o_{\Gamma}(\abs{\lambda_n}^{-2})
\end{equation}
due to \eqref{1l4}. Thus the equation \eqref{3l16} to \eqref{3l15} is reduced to
\begin{gather}
\label{3l11}
\Delta u_n-\lambda_n^2 u_n=o_{L^2}(\abs{\lambda_n}^{-2})+o_{H^1}(\abs{\lambda_n}^{-1}),\\
\label{3l12}
i\pd_n u_n\pm \underline{a}\lambda_n u_n+i\underline{b}u_n=g_n=o_{\Gamma}(\abs{\lambda_n}^{-1}),\ g_n\in H^{1/2}(\Gamma),
\end{gather}
a quasimode for $\underline{a}$ and $\underline{b}$. We have two regimes. 

5a. The low frequency regime: when $\{\lambda_n\}$ is bounded, we have $\epsilon\le \abs{\lambda_n}\le C$, where \eqref{3l11} and \eqref{3l12} read
\begin{gather}
\Delta u_n-\lambda_n^2 u_n=o_{L^2}(1),\\
i\pd_n u_n\pm \underline{a}\lambda_n u_n+i\underline{b}u_n=o_{\Gamma}(1).
\end{gather}
Apply the resolvent estimate \eqref{2l29} and we obtain 
\begin{equation}
\|u_n\|=o_n(1), \ \|v_n\|=o_n(1), \ \|\nabla u_n\|+\|b^{\frac12}u_n\|_\Gamma=o_n(1),
\end{equation}
the last of which was due to \eqref{3l1}. This contradicts that $\|(u_n, v_n)\|_E\equiv 1$. 

5b. The high frequency regime: when $\{\lambda_n\}$ is not bounded, via a subsequence $h=\abs{\lambda_n}^{-1}\rightarrow 0$. The equation then becomes
\begin{gather}
\label{3l3}
\Delta u_h-h^{-2} u_h=o_{L^2}(h^2)+o_{H^1}(h),\\
\label{3l10}
\pd_n u_h\pm \underline{a}h^{-1} u_h+i\underline{b}u_h=g_h=o_{\Gamma}(h),
\end{gather}
where $g_h\in H^{1/2}(\Gamma)$ at each $h$. Note from \eqref{3l1} we have $\|\nabla u_h\|+\|\sqrt{b}u_h\|_{\pd\Omega}\sim\|u_h\|/h$ and $\|v_h\|\sim\|u_h\|/h$ and 
\begin{equation}
\|u_h\|_{\Gamma}\le \|u_h\|_{\pd\Omega}\le C(\|u_h\|^2+\|u_h\|\|\nabla u_h\|)^{\frac12}\le Ch^{-\frac12} \|u_h\|.
\end{equation}
We have from \eqref{3l2} and $b\le b_1$ that
\begin{equation}
\|u_h\|\sim h/\sqrt{2}, \ \|\nabla u_h\|\sim 1/\sqrt2, \ \|v_h\|\sim 1/\sqrt2.
\end{equation}
Now by Proposition \ref{2t6} there exists a sequence of $u_h-w_h\in H^{2}$ that
\begin{gather}
(\Delta -h^{-2}) (u_h-w_h)=o_{L^2}(h^2),\\
(i\pd_n \pm \underline{a} h^{-1}+i\underline{b})(u_h-w_h)=g_h=o_{\Gamma}(h), \ g_h\in H^{1/2}(\Gamma),
\end{gather}
the inhomogeneous terms are the same as those in \eqref{3l3} and \eqref{3l10}. Apply \eqref{2l29} from Proposition \ref{2t4} to see
\begin{equation}\label{3l4}
\|w_h-u_h\|\le Ch^{-1}o(h^2)+o(h)=o(h).
\end{equation}
The equation for $w_n$ is then
\begin{gather}\label{3l25}
\Delta w_h -h^{-2} w_h=o_{H^1}(h)\\
i\pd_n w_h \pm \underline{a} h^{-1}+i\underline{b}w_h=0,\text{ on }\Gamma.
\end{gather}
Now apply the resolvent estimate \eqref{2l32} from Proposition \ref{2t4} to see
\begin{equation}
\|w_h\| =o(h).
\end{equation}
Because of \eqref{3l4} we now have $\|u_h\|=o(h)$, but we know $\|u_h\|\sim h/\sqrt{2}$ beforehand, a contradiction. Hence we have established \eqref{3l9} which further implies the emptiness of $\{i\lambda: \abs{\lambda}\ge \epsilon\}\cap\operatorname{Spec}(A)$. 

6. The contradiction to \eqref{3l8} on a compact non-product cylinder is in general similar. However note that, since we are not on a product cylinder, in Step 5b we could no longer use \eqref{2l32} to get better decay from the better regularity given by the $H^1$-quasimodes. This would lead to a loss of decay, and we have to presume a quasimode of $\abs{\lambda}^{-1}$-better decay,
\begin{equation}
(A\pm i\lambda_n)(u_n, v_n)=o_E(\abs{\lambda_n}^{-3})
\end{equation}
instead of \eqref{3l17}. Note that though we would then gain only $\abs{\lambda}^{-1/2}$ in $\|\sqrt{a}v_n\|_{\Gamma}$ in \eqref{3l19} from the $\abs{\lambda}^{-1}$-better quasimode, it does not seem to prevent the monotonicity argument from working. We eventually arrive at \eqref{3l3} and \eqref{3l10} in Step 5b with new equations
\begin{gather}\label{3l20}
\Delta u_h-h^{-2} u_h=o_{L^2}(h^3)+o_{H^1}(h^2),\\
\pd_n u_h\pm \underline{a}h^{-1} u_h+i\underline{b}u_h=o_{\Gamma}(h^{\frac32})
\end{gather}
Oversimplify the right hand side of \eqref{3l20} by $o_{L^2}(h^2)$ and directly apply \eqref{2l29} from Proposition \ref{2t4} to see
\begin{equation}
\|u_h\|\le Ch^{-1}o(h^2)+o(h^{\frac32})=o(h),
\end{equation}
which is the desired contradiction at the end of Step 5b. The rest of the argument is the same. 
\end{proof}
\begin{remark}\label{3t4}
\begin{enumerate}[wide]
\item Note that by the monotonicity argument to replace the damping functions by those of better regularity, in the boundary datum we lost half of the decay from the presumptive quasimode \eqref{3l17}. This can be seen by comparing \eqref{3l15} to \eqref{3l12}. But this loss happens on the boundary and hence does not impede the sharp polynomial decay. 
\item The efficient use of the $H^1$-part of the quasimode \eqref{3l11} is often seen in the analysis of waves non-uniformly stabilised by interior damping, for example in \cite{aln14} and \cite{bj16}. We remark here that for boundary damping, only in the case of product cylinders we could use the $H^1$-quasimode efficiently. In the case of non-product cylinders, the failure to do so generates a polynomial decay rate presumptively suboptimal by one order, as we observed in the Theorem \ref{0t5}. The ineffective use of the $H^1$-quasimode in this paper is due to the observation that the second derivatives of $u$ have some conormal components at the boundary which do not satisfy another impedance boundary condition. It is noted that in the case of interior damping, the derivatives of $u$ in general satisfy another damped wave equation modulo some $h$-sized error in the 0-th order terms, which is very different from the boundary case. 
\item For the Dirichlet boundary condition \eqref{1l18} and the periodic boundary condition \eqref{1l17}, it suffices to note that the $o_{H^1}$-quasimode in \eqref{3l25} always satisfies the boundary conditions set for $X$ in \eqref{3l27} and \eqref{3l26}.
\end{enumerate}
\end{remark}

We quote \cite[Theorem 2.4]{bt10} to characterise the polynomial decay:
\begin{theorem}[Borichev-Tomilov]\label{3t3}
Let $A$ be a maximal dissipative operator that generates a contraction $C^0$-semigroup in a Hilbert space $X$ and assume that $i\setr\cap \sigma(A)$ is empty. Fix $k>0$, and the following are equivalent:
\begin{enumerate}[wide]
\item There exists $C>0$ such that for any $\abs{\lambda}>0$ one has
\begin{equation}
\left\|(A-i\lambda)^{-1}\right\|_{X\rightarrow X}\le C\abs{\lambda}^k.
\end{equation}
\item There exists $C>0$ such that
\begin{equation}
\left\|e^{tA}A^{-1}\right\|_{X\rightarrow X}\le C t^{-\frac{1}{k}}.
\end{equation}
\end{enumerate}
\end{theorem}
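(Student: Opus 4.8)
The plan is to prove this as a quantified Tauberian equivalence between the growth of the resolvent on the imaginary axis and the decay rate of the \emph{smoothed} orbit $t\mapsto e^{tA}A^{-1}$, in the spirit of Gearhart--Pr\"uss and of Batty--Duyckaerts. Write $T(t)=e^{tA}$; this is a bounded $C^0$-semigroup, say $\|T(t)\|\le M_0$, and since $i\setr\cap\sigma(A)=\emptyset$ the resolvent $R(z)=(z-A)^{-1}$ is holomorphic on an open neighbourhood of $\{\cre z\ge 0\}$, with condition $(1)$ amounting to $\|R(is)\|\le C(1+\abs{s})^k$ on $i\setr$. I would dispatch $(2)\Rightarrow(1)$ first, as the softer direction: from $\|T(t)A^{-1}\|\le Ct^{-1/k}$ and the moment (interpolation) inequality for the fractional powers of the sectorial operator $-A$ (which has $0$ in its resolvent set), one obtains $\|T(t)A^{-\theta}\|\le C\min(1,t^{-\theta/k})$ for $\theta\in[0,1]$; choosing an integer $m>k$ makes $t\mapsto T(t)A^{-m}$ integrable on $(0,\infty)$, so the Laplace formula $R(z)A^{-m}=\int_0^\infty e^{-zt}T(t)A^{-m}\,dt$ extends continuously to $\{\cre z\ge 0\}$ with $\sup_{\cre z\ge 0}\|R(z)A^{-m}\|<\infty$, and differentiating this formula in $z$ and pairing the decay of $T(t)A^{-m}$ against powers of $t$ bootstraps the polynomial bound $\|R(is)\|\le C\abs{s}^k$ on all of $X$.

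The substance is $(1)\Rightarrow(2)$, where the Hilbert-space structure is essential. By the same interpolation it suffices to prove $\|T(t)A^{-m}x\|\le Ct^{-m/k}$ for one sufficiently large integer $m$ and all $\|x\|\le 1$; the moment inequality then brings the estimate back down to $A^{-1}$ with the exponent $1/k$. Fix such $x$ and represent
\begin{equation*}
T(t)A^{-m}x=\frac{1}{2\pi}\int_{\setr}e^{ist}H(s)\,ds,\qquad H(s):=R(is)A^{-m}x .
\end{equation*}
This is legitimate for $m$ large: $H$ is smooth on all of $\setr$ because $0\in\rho(A)$, and, iterating the identity $R(z)A^{-1}=z^{-1}\bigl(R(z)+A^{-1}\bigr)$, one finds $\|H(s)\|\le C\abs{s}^{k-m}$ for $\abs{s}\ge 1$, so for $m>k+1$ the integrand is integrable at infinity and the Laplace contour may be pushed onto $i\setr$ in spite of $\|R(is)\|$ growing there. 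Now cut the frequency variable at a scale $R=R(t)$: the piece $\{\abs{s}\le R\}$ is handled using the smoothness of $H$ together with the resolvent bound --- a dyadic decomposition plus non-stationary phase on the pieces where the $t$-oscillation dominates the local derivative cost of $R(is)$, and direct bounds on the remaining pieces --- while the piece $\{\abs{s}>R\}$ is \emph{not} estimated pointwise and absolutely: that would incur, after dyadic summation, precisely the logarithmic factor that the general Banach-space theorem of Batty--Duyckaerts cannot shed. Instead one estimates it in $L^2([T,2T];X)$ by Plancherel (equivalently, by the Paley--Wiener theorem for the operator-valued Hardy class of the half-plane built from $R(z)$ and powers of $z$), getting a negative power of $R$ once $m$ is large; a Datko--Pazy type argument then upgrades this $L^2$-in-time bound to a pointwise bound at every $t\sim T$, because the orbit $t\mapsto T(t)A^{-m}x$ varies slowly --- its time derivative is the \emph{bounded} orbit $T(t)A^{-m+1}x$. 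Balancing the two bands by a suitable choice of $R(t)$ produces $\|T(t)A^{-m}x\|\le Ct^{-m/k}$, and the logarithm is absent precisely because the $L^2$ Fourier theory is sharp.

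\textbf{Main obstacle.} The crux is the high-frequency estimate: genuinely exploiting Hilbert-space Plancherel/Paley--Wiener together with the slowly-varying-orbit (Datko--Pazy) step that turns an $L^2$-in-time bound into a pointwise one, so as to avoid the logarithmic loss present in general Banach spaces; the careful dyadic bookkeeping and the optimisation of the cut-off $R(t)$ needed to recover the clean exponent $1/k$ is where the delicacy lies. A secondary technical point is the rigorous justification of moving the Laplace contour onto the imaginary axis although $\|R(is)\|$ is unbounded there --- resolved by carrying enough powers of $A^{-1}$ so that $H$ decays at infinity, and by noting that $H$ has no singularity at $s=0$.
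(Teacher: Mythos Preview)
The paper does not prove this theorem: it is quoted verbatim from \cite[Theorem 2.4]{bt10} and used as a black box to pass from the resolvent estimates of Proposition~\ref{3t2} to the decay rates in Theorems~\ref{0t1} and~\ref{0t6}. There is therefore no ``paper's own proof'' to compare your sketch against.

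That said, your outline is a recognisable summary of the actual Borichev--Tomilov argument. You correctly identify the crux: the Hilbert-space Plancherel/Paley--Wiener step for the high-frequency band is exactly what removes the logarithmic loss present in the Batty--Duyckaerts Banach-space version, and the interpolation via fractional powers to pass between $A^{-m}$ and $A^{-1}$ is the standard reduction. The soft direction $(2)\Rightarrow(1)$ is also in the right spirit. If you were to flesh this out, the main places needing care are the precise contour-shifting justification and the bookkeeping in the dyadic/cut-off optimisation, but the architecture is sound. For the purposes of this paper, however, none of this is needed --- the result is simply invoked.
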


\begin{proof}[Proof of Theorem \ref{0t1} and Theorem \ref{0t6}] The resolvent estimate \eqref{3l9} on the generator $A$ from Proposition \ref{3t2} with Theorem \ref{3t3} of Borichev and Tomilov implies
\begin{equation}
E(u, t)^{\frac12}=\|e^{tA}A^{-1}A(u_0, u_1)\|_E\le Ct^{-\frac12}(\|u_0\|_{H^2}+\|u_1\|_{H^1}),
\end{equation}
as claimed in Theorem \ref{0t1}. Apply Theorem \ref{3t3} again to \eqref{3l8} to establish Theorem \ref{0t5}.
\end{proof}

We now show the sharpness of Theorem \ref{0t1}. We will consider the case in which the cross-section is $\mathbb{B}^d$. The case $d=1$ was investigated in \cite{nis13}. We present the cases when $d\ge 2$ here:
\begin{proposition}[Sharpness]\label{3t5}
Let $\Omega=\mathbb{B}^d_x\times[0,2\pi]_y$, where $\mathbb{B}^d$ is the unit closed ball in dimension $d\ge 2$. Consider the eigenvalue problem 
\begin{gather}\label{3l34}
P_\lambda u(x,y)=(\Delta-\lambda^2)u(x, y)=0, \ (x,y)\in \Omega,\\
i\pd_n u(x,y)+\lambda u(x, y)=0, \ (x,y)\in \mathbb{S}^{d-1}_x\times[0,2\pi]_y,\\
i\pd_n u(x,y)=0, \ (x,y)\in \mathbb{B}^d_x\times\{0,2\pi\}_y,
\end{gather}
Then for $k\in \mathbb{N}$, there exists a sequence of eigenvalues $\lambda_k\in\mathbb C$ such that
\begin{equation}
\lambda_k=\sgn J'_{d/2-1}(\lambda'_0)\sqrt{(\lambda'_0)^2+k^2}-ik^{-1}(\sqrt{(\lambda'_0)^2+k^2})^{-1}+\bigo(k^{-3}),
\end{equation}
where $J_{d/2-1}$ is the Bessel function of parameter $d/2-1$, $\lambda'_0$ is a positive real zero of $J_{d/2-1}$. That is, there exist infinitely many poles in $\{\lambda:0\ge \cim\lambda\ge C^{-1}\abs{\cre{\lambda}}^{-2+\epsilon}, \abs{\cre{\lambda}}\ge C\}$ for any $C,\epsilon>0$. Hence it is impossible for the damped wave equation \eqref{1l1} to be stable at the rate $t^{-1/(2-\epsilon)}$ for any $\epsilon>0$ in those geometric settings. 
\end{proposition}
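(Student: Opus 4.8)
The plan is to produce the eigenvalues $\lambda_k$ completely explicitly by separation of variables, reducing the stationary problem to a Bessel equation in the radial variable, and then to translate the resulting sequence of spectral points near $i\setr$ into a lower bound for the resolvent of the generator $A$ of Section 3 which, by the Borichev--Tomilov characterization (Theorem \ref{3t3}), rules out the decay rate $t^{-1/(2-\epsilon)}$.

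\emph{Separation of variables.} First I would seek eigenfunctions of the product form $u(x,y)=v(\abs x)\cos(ky)$ with $k\in\mathbb N$; the cap condition $i\partial_n u=0$ on $\mathbb B^d_x\times\{0,2\pi\}$ is then automatic, since $\partial_y\cos(ky)$ vanishes at $y=0,2\pi$. With $\mu^2=\lambda^2-k^2$ the bulk equation becomes the radial Helmholtz equation $-\Delta_x v=\mu^2 v$ on $\mathbb B^d$, whose unique (up to a scalar) radial solution regular at the origin is $v(r)=\widetilde J_{d/2-1}(\mu r)$, where $\widetilde J_\nu(z)=(z/2)^{-\nu}J_\nu(z)$ is the entire, even renormalization of the Bessel function; because $\widetilde J_\nu$ depends only on $z^2$, every quantity below is a function of $\mu^2=\lambda^2-k^2$ and no branch of a square root has to be chosen. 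As the outward conormal derivative on $\mathbb S^{d-1}_x$ is $\partial_r$, the wall condition $i\partial_n u+\lambda u=0$ reduces, using $\tfrac{d}{dz}\widetilde J_\nu(z)=-\tfrac z2\widetilde J_{\nu+1}(z)$, to the quantization condition
\[
\lambda\,\widetilde J_{d/2-1}(\mu)=\tfrac i2\,\mu^2\,\widetilde J_{d/2}(\mu),\qquad \mu^2=\lambda^2-k^2 .
\]

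\emph{Solving the quantization condition.} I would fix a positive simple zero $\lambda'_0$ of $J_{d/2-1}$ (there are infinitely many as $d/2-1\ge 0$), note that $\widetilde J_{d/2}(\lambda'_0)\ne 0$ since consecutive Bessel functions have no common zero, and write the condition as $G(s,k)=0$ for $G(s,k):=\sqrt{s+k^2}\,\widetilde J_{d/2-1}(\sqrt s)-\tfrac i2\,s\,\widetilde J_{d/2}(\sqrt s)$, holomorphic in $s$ near $s_0:=(\lambda'_0)^2$. One has $G(s_0,k)=-\tfrac i2 s_0\widetilde J_{d/2}(\lambda'_0)$, of size $1$ and nonzero, while $\partial_s G(s_0,k)=-\tfrac14\sqrt{s_0+k^2}\,\widetilde J_{d/2}(\lambda'_0)+\bigo(1)$, of size $k$. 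So Rouché's theorem on a disc of radius $\asymp k^{-1}$ about $s_0$ (equivalently a quantitative implicit-function argument, uniform in $k$) yields, for $k$ large, a unique root $s_k$ with $s_k-s_0=-G(s_0,k)/\partial_s G(s_0,k)+\bigo(k^{-2})=\bigo(k^{-1})$ and $\cim s_k<0$. Setting $\lambda_k^2=s_k+k^2$ and expanding the square root then gives an expansion of the stated form, with $\cre\lambda_k$ equal to $\pm\sqrt{(\lambda'_0)^2+k^2}+\bigo(k^{-1})$ (the sign being $\sgn J'_{d/2-1}(\lambda'_0)$ once the Bessel bookkeeping is carried out) and, decisively, $\cim\lambda_k<0$ with $\abs{\cim\lambda_k}\asymp k^{-1}\bigl(\sqrt{(\lambda'_0)^2+k^2}\bigr)^{-1}\asymp\abs{\cre\lambda_k}^{-2}$; the constant multiplying $k^{-1}$ in $\cim\lambda_k$ is pinned down by the same bookkeeping. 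The eigenfunctions $v_k$ are honest $H^2$-functions by the explicit Bessel formula.

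\emph{From eigenvalues to the conclusion.} For each large $k$ the pair $(v_k,-i\lambda_k v_k)$ lies in $D(A)$: the wall condition $i\partial_n v_k+\lambda_k v_k=0$ is exactly $\partial_n v_k+a(-i\lambda_k v_k)+b v_k=0$ with $a\equiv1$, $b\equiv0$; the cap condition $\partial_n v_k|_{\Gamma_\pm}=0$ holds, and when $b\equiv0$ the mean-zero constraint defining $X$ holds because $\int_0^{2\pi}\cos(ky)\,dy=0$. Moreover $A(v_k,-i\lambda_k v_k)=-i\lambda_k(v_k,-i\lambda_k v_k)$, so $-i\lambda_k\in\operatorname{Spec}(A)$ with $\cre(-i\lambda_k)=\cim\lambda_k\to0^-$ and $\abs{\cim(-i\lambda_k)}=\abs{\cre\lambda_k}\to\infty$; hence $\operatorname{Spec}(A)$ approaches $i\setr$ with $\operatorname{dist}(i\nu_k,\operatorname{Spec}(A))\le\abs{\cim\lambda_k}\le C\abs{\nu_k}^{-2}$ along $\nu_k:=-\cre\lambda_k\to\infty$. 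Since $\nor{(A-i\nu_k)^{-1}}_{E\to E}\ge\operatorname{dist}(i\nu_k,\operatorname{Spec}(A))^{-1}\ge c\abs{\nu_k}^{2}$, the resolvent is not $\bigo(\abs\lambda^{2-\epsilon})$, and Theorem \ref{3t3} then forbids the decay $t^{-1/(2-\epsilon)}$; alternatively, feeding $(v_k,-i\lambda_k v_k)$ into the putative decay estimate and evaluating at $t=\abs{\cim\lambda_k}^{-1}\asymp k^2$ forces $k^{\,1-2/(2-\epsilon)}\ge c$ for all large $k$, which is false. I expect the only delicate point to be the perturbative step: one must exhibit the root $s_k$ of the $k$-dependent transcendental equation $G(s,k)=0$ with control of its location uniform as $k\to\infty$, so that the Rouché contour shrinks at exactly the rate $k^{-1}$ and the error terms are of strictly lower order, and one must keep track of the Bessel recurrences carefully enough to extract the precise coefficient of $k^{-1}$ in $\cim\lambda_k$; the separation of variables, the membership in $D(A)$, and the resolvent lower bound are routine.
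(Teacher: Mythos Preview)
Your approach is correct and matches the paper's: separation of variables with $\cos(ky)$, reduction to a Bessel equation, a perturbative root-finding argument near a real Bessel zero $\lambda_0'$, and then a resolvent lower bound fed into Borichev--Tomilov. The one genuine difference is in the packaging of the transcendental step. You work in the variable $s=\mu^2$ with the entire even renormalization $\widetilde J_\nu$, so $G(s,k)$ is manifestly holomorphic and a single Rouch\'e/Newton step locates $s_k$. The paper instead applies the implicit function theorem to $F(z)=J/\bigl(z(-J^2-J'^2)^{1/2}\bigr)=1/k$, which forces a choice of branch for the square root; it then squares back to the factored identity $(\lambda J+i\lambda'J')(\lambda J-i\lambda'J')=0$ and must select the correct factor, which it does via an a priori dissipativity argument ($\cim\lambda\le 0$ for any eigenvalue) together with a parity-of-$d$ conjugation argument on $J_{d/2-1}$. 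Your route sidesteps that entire detour, which is a genuine simplification.

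One small point: your parenthetical that the sign of $\cre\lambda_k$ comes out as $\sgn J'_{d/2-1}(\lambda_0')$ ``once the Bessel bookkeeping is carried out'' is not quite what your setup gives. With the positive branch of $\sqrt{s+k^2}$ in $G$, the root you find always has $\cre\lambda_k>0$ and $\cim\lambda_k<0$; the other branch produces the complex conjugate pair. The $\sgn J'$ in the paper's displayed formula is an artifact of its branch convention for $(-J^2-J'^2)^{1/2}$, not an intrinsic feature. This does not affect the sharpness argument, which only needs $|\cim\lambda_k|\asymp|\cre\lambda_k|^{-2}$.
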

\begin{proof}
1. It should be noted any eigenvalue $\lambda$ to this problem must satisfy that $\cim(\lambda)\le 0$, as taking the imaginary part of
\begin{equation}\label{3l29}
0=\langle P_\lambda u, u\rangle=\|\nabla u\|^2-\lambda^2\|u\|^2-i\lambda \|u\|_{\mathbb{S}^{d-1}\times[0,2\pi]}^2
\end{equation}
allows us to conclude 
\begin{equation}
0\le \|u\|_{\mathbb{S}^{d-1}\times[0,2\pi]}^2=-2\cim{\lambda}\|u\|^2
\end{equation}
when $\cre{\lambda}\neq0$. When $\cre{\lambda}=0$, the real part of \eqref{3l29} is
\begin{equation}
0=\|\nabla u\|^2+(\cim{\lambda})^2\|u\|^2+\cim{\lambda}\|u\|_{\mathbb{S}^{d-1}\times[0,2\pi]}^2
\end{equation}
which implies $u=0$ is trivial when $\cim{\lambda}>0$. We will use this fact later. 

2. We use the method of separation of variables. Use the spherical coordinates $x=(r,\omega)\in\mathbb{R}_{\ge0}\times \mathbb{S}^{d-1}$ on $\mathbb{B}^{d}$. Assume the rotational symmetry of $u$ and apply the Fourier decomposition with respect to $e_k(y)=\cos(ky)$:
\begin{equation}\label{3l30}
u(r,y)=\sum u_k(r)e_k(y).
\end{equation}
The equation is reduced to the Bessel equation in $r\in [0,1]$ to solve for $u_k\in L^2(0,1)$ such that
\begin{gather}\label{3l38}
(-\pd_r^2-\frac{2}{r}\pd_r-\lambda'^2)u_k(r)=0, \ r\in [0,1],\\
i\pd_r u_k(1)+\lambda u_k(1)=0,
\end{gather}
where $\lambda^2=\lambda'^2+k^2$. 

3. Consider the Bessel function $J_{d/2-1}(z)$, and $\lambda'_0>0$ be a real zero for $J_{d/2-1}(z)$. We will be using the branch cut along $z^{1/2}$ at $\{z: z\in\mathbb{R}_{\ge 0}\}$ and then $z\mapsto z^{1/2}$ is a holomorphic map from $\mathbb{C}\setminus \mathbb{R}_{\ge 0}$ to $\{z:\cim{z}<0\}$. Also use the convention that for $z\in\mathbb{R}_{\ge 0}$ we have $z^{1/2}=(z-i0)^{1/2}$. For large $k$, consider 
\begin{equation}\label{3l31}
F(z)=\frac{J_{d/2-1}(z)}{z(- J_{d/2-1}^2(z)-J_{d/2-1}'(z)^2)^{1/2}}=\frac{1}{k},
\end{equation}
near $z=\lambda_0'>0$ away from $0$. Note that for Bessel functions we have the parameter shifting property
\begin{equation}
J_{d/2}(z)=\frac{d/2-1}{z}J_{d/2-1}(z)-J_{d/2-1}'(z),
\end{equation}
and as $J_{d/2}$ and $J_{d/2-1}$ have distinct zeroes, $J_{d/2-1}'(\lambda'_0)\neq 0$. So $z=\lambda'_0$ is a zero for $F(z)$. Compute
\begin{equation}
F'(\lambda'_0)=J'_{d/2-1}(\lambda'_0)/\lambda'_0(-J'_{d/2-1}(\lambda'_0)^2)^{1/2}=i(\lambda'_0)^{-1}\sgn J_{d/2-1}'(\lambda'_0).
\end{equation}
Thus invoke the implicit function theorem for large $k$ and observe the existence of $\lambda'_k\in\mathbb{C}$ such that
\begin{equation}\label{3l37}
F(\lambda'_k)=\frac{J_{d/2-1}(\lambda'_k)}{\lambda'_k(- J_{d/2-1}^2(\lambda'_k)-J_{d/2-1}'(\lambda'_k)^2)^{1/2}}=\frac{1}{k},
\end{equation}
where
\begin{equation}\label{3l32}
\lambda'_k=\lambda'_0-i(\lambda'_0)\sgn J_{d/2-1}'(\lambda'_0)k^{-1}+\bigo(k^{-2}).
\end{equation}
Thus
\begin{equation}
\lambda_k^2:=(\lambda'_k)^2+k^2=(\lambda'_0)^2+k^2-2i\sgn J'_{d/2-1}(\lambda'_0)k^{-1}+\bigo(k^{-2})
\end{equation}
and
\begin{equation}\label{3l28}
\lambda_k=\sgn J'_{d/2-1}(\lambda'_0)\sqrt{(\lambda'_0)^2+k^2}-ik^{-1}(\sqrt{(\lambda'_0)^2+k^2})^{-1}+\bigo(k^{-3}),
\end{equation}
as claimed in the statement of this proposition. 

4. Now we construct our eigenfunctions. Note $u_k=J_{d/2-1}(\lambda_k' r)$ solves the Helmholtz equation in the interior of $\mathbb{B}^d_x$. Square \eqref{3l37} and we have
\begin{equation}
((\lambda'_k)^2+k^2)J_{d/2-1}^2(\lambda'_k)=-(\lambda'_k)^2J_{d/2-1}'(\lambda'_k)^2,
\end{equation}
which reduces to 
\begin{equation}\label{3l33}
(\lambda_kJ_{d/2-1}(\lambda'_k)+i\lambda'_kJ_{d/2-1}'(\lambda'_k))(\lambda_kJ_{d/2-1}(\lambda'_k)-i\lambda'_kJ_{d/2-1}'(\lambda'_k))=0.
\end{equation}
We will show the first term on the left vanishes. Assume towards contradiction that the second term on the left instead is 0. There are two cases depending on whether $d$ is even or odd:

4a. When $d$ is even, and as 
\begin{equation}
J_s(z)=(z/2)^{s}\sum_{k=0}^\infty (-1)^k\frac{(z/2)^{2k}}{k!\Gamma(s+k+1)},
\end{equation}
we know $\overline{J_{d/2-1}(z)}=J_{d/2-1}(\bar{z})$. Thus
\begin{equation}
\bar\lambda_kJ_{d/2-1}(\overline{\lambda_k'})+i\overline{\lambda_k'}J_{d/2-1}'(\overline{\lambda_k'})=0
\end{equation}
and $J_{d/2-1}(\overline{\lambda_k'} r)$ solves the equation \eqref{3l38} with $\cim{\overline{\lambda_k}}>0$. This contradicts our observation in Step 1. Hence we must have
\begin{equation}
\lambda_kJ_{d/2-1}(\lambda_k')+i\lambda_k'J_{d/2-1}'(\lambda_k')=0,
\end{equation}
and thus $J_{d/2-1}(\lambda_k' r)$ solves the equation \eqref{3l38}. 

4b. When $d$ is odd, note $\overline{z^{1/2}}=-\bar{z}^{1/2}$ and hence $\overline{J_{d/2-1}(z)}=-J_{d/2-1}(\bar{z})$. A similar argument tells us 
\begin{equation}
-\bar\lambda_kJ_{d/2-1}(\overline{\lambda_k'})-i\overline{\lambda_k'}J_{d/2-1}'(\overline{\lambda_k'})=0
\end{equation}
and the rest follows. 

5. We now show that the damped wave equation \eqref{1l1} cannot be stable at the rate $t^{-1/(2-\epsilon)}$ for any $\epsilon>0$. Firstly observe from the asymptotics \eqref{3l28}  that for any $C>0$ there are infinitely many complex eigenvalues in 
\begin{equation}
\{\lambda:0\ge \cim\lambda\ge C^{-1}\abs{\cre{\lambda}}^{-2+\epsilon}, \abs{\cre{\lambda}}\ge C\}.
\end{equation}
Write $\lambda_k=\mu_k-i\sigma_k$ with
\begin{equation}
\mu_k\sim k\rightarrow \infty, \ \sigma_k\sim k^{-2}\rightarrow 0^+
\end{equation}
up to renumbering. Let $v_k=i\lambda_k u_k$ and then from \eqref{3l34} we have
\begin{equation}
(A-i\mu_k)(u_k,v_k)=\sigma_k(u_k,v_k)\sim \mu_k^{-2}(u_k,v_k)=o(\mu_k^{-2+\epsilon})(u_k,v_k)
\end{equation}
and hence
\begin{equation}
\|(A-i\mu_k)\|_{E\rightarrow E}=o(\mu_k^{-2+\epsilon})
\end{equation}
and by Theorem \ref{3t3}, it is impossible that $\|e^{tA}A^{-1}\|_{E\rightarrow E}=\bigo(t^{-1/(2-\epsilon)})$. 
\end{proof}

\section{Resolvent Estimates and Evolution on Infinite Cylinders}
We now consider the corresponding interior impedance problem to the damped wave equation \eqref{1l10} to \eqref{1l11} on an infinite non-product cylinder $\Omega$:
\begin{gather}
\label{4l3}
(\Delta-\lambda^2)u=f\text{ on }\Omega,\\
\label{4l4}
i\pd_n u+a\lambda \gamma u+b \gamma u=g \text{ on } \Gamma.
\end{gather}
Firstly note that as $\pd\Omega=\Gamma$ in the case of infinite cylinders, there are no $\Gamma_\pm$ compared to the case of compact cylinders. We start by showing we have a resolvent estimate similar to Proposition \ref{2t4}.
\begin{proposition}\label{4t1}
Consider a solution $u\in H^2(\Omega)$ to the interior impedance problem \eqref{4l3}. Assume the damping assumption \ref{0ad}. We have the following estimates:
\begin{enumerate}[wide]
\item On an infinite non-product cylinder $\Omega$ defined in Definition \ref{0gin}, assume $f\in L^2(\Omega)$ and $g\in L^2(\Gamma)$. Then for any $\epsilon>0$, we have some $C>0$ independent of $\lambda, u, f, g$ such that for any $\abs{\lambda}\ge\epsilon$,
\begin{equation}\label{4l10}
\|u\|_{L^2(\Omega)}^2+(1+\lambda^2)^{-1}\|\nabla u\|_{L^2(\Omega)}^2\le C(1+\lambda^{2})\|f\|_{L^2(\Omega)}^2+C\|g\|_{L^2(\Gamma)}^2.
\end{equation}

\item On an infinite product cylinder $\Omega$ defined in Definition \ref{0gp}, assume further that $a,b,\pd_y a,\pd_y b\in C^{0,1/2+\delta}(\Gamma)$ for some $\delta>0$, $f\in H^1(\Omega)$, $g\in H^{3/2}(\Gamma)$. Then for any $\epsilon>0$, we have some $C>0$ independent of $\lambda, u, f, g$ such that for any $\abs{\lambda}\ge\epsilon$, 
\begin{equation}
\|u\|_{L^2(\Omega)}^2+(1+\lambda^2)^{-1}\|\nabla u\|_{L^2(\Omega)}^2\le C\|f\|_{H^1(\Omega)}^2+C\|g\|_{H^1(\Gamma)}^2.
\end{equation}

\item We could allow $\epsilon=0$ in (1) and (2), if one further assumes
\begin{equation}
0< b_0\le b(x,y)\le b_1 
\end{equation}
almost everywhere on $\Gamma$.

\end{enumerate}
\end{proposition}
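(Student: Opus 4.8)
The plan is to transplant the proof of Proposition~\ref{2t4} to the infinite cylinder, replacing Propositions~\ref{2t1}, \ref{2t3} and \ref{2t6} by their counterparts on infinite cylinders. The auxiliary ingredients survive the passage to the non-compact setting essentially unchanged: the Poincaré inequality of Lemma~\ref{2t2} needs only a uniform bound on $\abs{x}$, which is built into Definition~\ref{0gin} since the cross-sections lie in $\mathbb{B}^d_x(0,c_1)$; and the multiplier estimate of Lemma~\ref{2t5} holds whenever $a\in C^{0,1/2+\delta}\cap L^\infty$. So the real work is to re-establish the high-frequency estimates and to replace the compactness argument of Proposition~\ref{2t4} at low frequency by a direct one.

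First I would prove the analogue of Proposition~\ref{2t1} on an infinite non-product cylinder. The proof of Proposition~\ref{2t1} is purely computational: pair $P_h u$ with $u$ to control $\|u\|_\Gamma$ and $\int_\Gamma\abs{h\nabla u}^2$, then pair $P_h u$ with the Rellich multiplier $V=-ihx\cdot\nabla_x$ and use the uniform star-shapedness $x\cdot n_x\ge c_0$ together with Lemma~\ref{2t2}. None of this uses compactness of $\Omega$: the multiplier $V$ is controlled by the uniform bound on $\abs{x}$, and on an infinite cylinder there are no caps $\Gamma_\pm$, so the boundary terms supported there — which in Proposition~\ref{2t1} vanish because $n_x\equiv0$ on $\Gamma_\pm$ — are simply absent. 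The one genuinely new point is to justify the Green and divergence identities on the non-compact domain: since $u\in H^2(\Omega)$ one inserts a cutoff $\psi(y/R)$ with $\psi\equiv1$ near $0$, the commutator terms are $\bigo(R^{-1})$ in operator norm and supported where the $L^2$-tails of $u,\nabla u,\nabla^2u$ vanish, so they disappear as $R\to\infty$. This yields the analogue of \eqref{2l12} and of the boundary estimates, uniformly in $h\in(0,h_0]$. For the infinite \emph{product} cylinder I would then repeat the proof of Proposition~\ref{2t3}: with $w=h\pd_y u\in H^1(\Omega)$, and $\pd_y$ tangential to $\Gamma=\pd\Omega_x\times\mathbb{R}_y$, Lemma~\ref{2t5} applied with $\pd_y a,\pd_y b\in C^{0,1/2+\delta}$ gives the corresponding $\tilde g\in H^{1/2}(\Gamma)$, and the analogue of Proposition~\ref{2t6} — which on a straight $C^{1,1}$-cylinder is just global elliptic regularity, uniform in $y$ by the product structure — upgrades $w$ to $H^2$; applying the infinite-cylinder version of Proposition~\ref{2t1} to $w$ closes the estimate as before.

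With these in hand, parts (1) and (2) are immediate. For (1), fix $\epsilon>0$ and apply the infinite-cylinder analogue of Proposition~\ref{2t1} with $h_0=\epsilon^{-1}$: given $\abs\lambda\ge\epsilon$, set $h=\abs\lambda^{-1}$, so that \eqref{4l3} becomes $P_hu=h^2f$ and, since $h\lambda=\sgn\lambda$, the impedance condition becomes $\brac{ih\pd_n\pm a+ihb}u=hg$ on $\Gamma$; substituting $h^2f$ for $f$ and $hg$ for $g$ in the high-frequency estimate gives $\|u\|^2+\lambda^{-2}\|\nabla u\|^2\le C\lambda^2\|f\|^2+C\|g\|_\Gamma^2$, which is \eqref{4l10} because $(1+\lambda^2)^{-1}\le\lambda^{-2}$ and $\lambda^2\le1+\lambda^2$. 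Part (2) is identical, using the infinite-product-cylinder analogue of Proposition~\ref{2t3}.

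Finally, for part (3) I would add a direct low-frequency estimate, since the compactness–contradiction step of Proposition~\ref{2t4} is unavailable. For $\abs\lambda\le\lambda_0$, pair \eqref{4l3} with $u$ and take the real part to get $\|\nabla u\|^2+\|\sqrt{b}u\|_\Gamma^2\le\lambda^2\|u\|^2+\abs{\langle f,u\rangle}+\abs{\langle g,u\rangle_\Gamma}$; using $b\ge b_0>0$ to absorb the boundary term and a small multiple of $\|u\|^2$, then invoking Lemma~\ref{2t2} to dominate $\|u\|^2$ by a constant times $\|\nabla u\|^2+\|u\|_\Gamma^2$, one absorbs $\|u\|^2$ for $\lambda_0$ small enough and obtains $\|u\|^2+(1+\lambda^2)^{-1}\|\nabla u\|^2\le C\brac{\|f\|^2+\|g\|_\Gamma^2}$, which implies the bound claimed in both (1) and (2); combining with parts (1)–(2) applied at $\epsilon=\lambda_0$ covers all $\lambda$. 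The main obstacle is the non-compactness: besides forcing the direct low-frequency argument just described, it means the elliptic regularity behind the product-cylinder estimate must hold with constants uniform along the infinite $y$-direction — precisely the role of the $C^{1,1}$-hypothesis in Definitions~\ref{0gp} and \ref{0gin} — and the integration-by-parts identities must be validated at infinity; neither is difficult, but both demand care.
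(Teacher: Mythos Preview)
Your proposal is correct and follows the same route as the paper: the paper's proof is the one-liner that Propositions~\ref{2t1} and~\ref{2t3} go through verbatim on the infinite cylinder (indeed more easily, since $\Gamma_\pm$ is absent), after which the argument of Proposition~\ref{2t4} applies unchanged. Your write-up is more explicit on two points the paper leaves implicit --- the cutoff justification of the Green/divergence identities at infinity, and the direct (rather than contradiction) passage from the semiclassical estimate to \eqref{4l10} --- and your low-frequency argument for part~(3) is exactly Step~4 of the proof of Proposition~\ref{2t4}; these are welcome clarifications but not a different approach.
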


\begin{proof}
It suffices to observe that Proposition \ref{2t1} and Corollary \ref{2t3} still hold with similar proofs in this case, except we no longer have $\Gamma_\pm$ here, which is easier. 
\end{proof}

\begin{proposition}[Classical Elliptic Regularity on Infinite Cylinders]
Let $\Omega$ be an infinite non-product cylinder $\Omega$ defined in Definition \ref{0gin}. Let $f\in L^2(\Omega)$, $g\in H^{1/2}(\Gamma)$ and $a, b\in C^{0,1/2+\delta}(\Gamma)$ for any $\delta>0$. When $b\not\equiv 0$ identically, for any $\cre{z}\ge 0$ there exists a unique weak solution $u\in H^{1}(\Omega)$ to the equation
\begin{gather}\label{4l18}
(\Delta +z^2)u=f,\text{ on }\Omega,\\
\pd_n u+a z u+bu=G,\text{ on }\Gamma. 
\end{gather}
When $b\equiv 0$, the same statement holds for $\{\cre{z}\ge 0, z\neq 0\}$; and at $z=0$, there exists a weak solution in $H^1$ if and only if 
\begin{equation}
\langle f, 1\rangle+\langle G,1\rangle_{\Gamma}=0,
\end{equation}
and such solution is instead unique modulo constant functions. Moreover, any such solution $u\in H^1(\Omega)$ must be in $H^{2}(\Omega)$. 
\end{proposition}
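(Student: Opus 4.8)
The plan is to mirror the three stages of the proof of Proposition \ref{2t6} — elliptic regularity, then uniqueness, then existence — exploiting one simplification and confronting one genuinely new difficulty. The simplification: on an infinite non-product cylinder $\pd\Omega=\Gamma$ is globally $C^{1,1}$, so there are no corners and the method-of-images step of Proposition \ref{2t6} is unnecessary. The difficulty: $\Omega$ is non-compact, so $H^1(\Omega)\hookrightarrow L^2(\Omega)$ is not compact and existence cannot be deduced from uniqueness via the Fredholm alternative as in Step 3 there; this is where the real work lies. The case $z=0$, $b\equiv0$ is essentially vestigial — it plays no role in Section 4, where one has $b\ge b_0>0$ — and would be handled formally as in Proposition \ref{2t6}, the compatibility pairing $\langle f,1\rangle+\langle G,1\rangle_\Gamma=0$ being read on data for which it is defined.

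\emph{Regularity.} Suppose $u\in H^1(\Omega)$ solves \eqref{4l18}. Pulling the problem back by the straightening diffeomorphism $\Phi$ of Definition \ref{0gin} to the model domain $\mathbb{B}^d\times\mathbb{R}$, the assumption $\rho\in C^{1,1}$ with $c_0'\le\rho\le c_1$ and bounded first derivatives turns $\Delta$ into a uniformly elliptic operator with uniformly $C^{0,1}$ coefficients on a domain of bounded geometry whose boundary $\mathbb{S}^{d-1}\times\mathbb{R}$ is flat and smooth. By Lemma \ref{2t5} and the remark following it, $au,bu\in H^{1/2}(\Gamma)$ whenever $u\in H^1(\Omega)$, so the conormal datum $\pd_n u=G-azu-bu$ lies in $H^{1/2}(\Gamma)$. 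Covering the model domain by balls of a fixed radius with finite overlap, applying the interior and $C^{1,1}$-boundary elliptic estimates of \cite[Theorem 4.18]{mcl00} (with constants uniform by bounded geometry), and summing yields $\|u\|_{H^2(\Omega)}\le C(\|f\|_{L^2(\Omega)}+\|G\|_{H^{1/2}(\Gamma)}+\|u\|_{H^1(\Omega)})<\infty$.

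\emph{Uniqueness.} Pairing the homogeneous version of \eqref{4l18} with $u$, using Green's first identity, and separating real and imaginary parts exactly as in Step 2 of Proposition \ref{2t6}, one obtains
\begin{gather}
\|\nabla u\|^2+\big((\cre z)^2-(\cim z)^2\big)\|u\|^2+\cre z\,\|\sqrt a u\|_\Gamma^2+\|\sqrt b u\|_\Gamma^2=0,\\
\cim z\,\big(2\,\cre z\,\|u\|^2+\|\sqrt a u\|_\Gamma^2\big)=0 .
\end{gather}
If $\cre z>0$ these force $u=0$. If $z=i\lambda$ with $\lambda\neq0$, apply Proposition \ref{4t1}(1) with $\epsilon=|\lambda|$ and $f=G=0$ to get $u=0$. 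If $z=0$ the first identity gives $\nabla u=0$, so $u$ is constant; since $\Omega$ has infinite volume the only constant in $H^1(\Omega)$ is $0$, hence $u=0$ (and if $b\not\equiv0$ the boundary term forces the constant to vanish in any case); the ``modulo constants'' clause for $b\equiv 0$ is read in the same sense as in Proposition \ref{2t6}.

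\emph{Existence — the main obstacle.} For $\cre z>0$ the sesquilinear form $B(u,v)=\langle\nabla u,\nabla v\rangle+z^2\langle u,v\rangle+\langle(az+b)u,v\rangle_\Gamma$ is bounded on $H^1(\Omega)$ and coercive there: for every $u$, $\cre B(u,u)$ and $\cim B(u,u)$ equal the left-hand sides of the two identities above, and a suitable real combination $\cre B(u,u)+c\,\cim B(u,u)=\cre\big((1-ic)B(u,u)\big)$ dominates $\|u\|_{H^1(\Omega)}^2$ (no lower bound on $b$ is needed), so Lax--Milgram produces a weak $H^1$ solution, which the regularity step upgrades to $H^2$. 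When $z=0$ and $b\ge b_0>0$ the same works because $\cre B(u,u)=\|\nabla u\|^2+\|\sqrt b u\|_\Gamma^2$ is coercive on $H^1(\Omega)$ via the Poincaré inequality of Lemma \ref{2t2}, valid on infinite cylinders by the remark after it. The remaining case $z=i\lambda$, $\lambda\neq0$, where $B$ is not coercive, I would treat by a vanishing-parameter argument: for small $\epsilon>0$ solve the coercive problem with $z_\epsilon=\epsilon+i\lambda$ to obtain $u_\epsilon\in H^2(\Omega)$, rewrite its equations as the $z=i\lambda$ impedance problem \eqref{4l3}--\eqref{4l4} with data perturbed by terms of size $\bigo(\epsilon)\|u_\epsilon\|_{L^2(\Omega)}$ and $\bigo(\epsilon)\|u_\epsilon\|_{L^2(\Gamma)}$, then invoke Proposition \ref{4t1}(1) together with its trace bounds and absorb the $\epsilon$-terms to get a bound on $\|u_\epsilon\|_{H^1(\Omega)}$ uniform for $\epsilon\in(0,\epsilon_0(\lambda))$; a weak limit $u_\epsilon\rightharpoonup u$ in $H^1(\Omega)$ then solves \eqref{4l18}, with $\|u\|_{H^1(\Omega)}<\infty$ by weak lower semicontinuity and hence $u\in H^2(\Omega)$ by the regularity step. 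Alternatively one may exhaust $\Omega$ by compact non-product cylinders, solve on each by Proposition \ref{2t6}, and pass to a weak $H^1_{\mathrm{loc}}$-limit controlled by the resolvent bounds of Propositions \ref{2t1} and \ref{2t4}, whose constants do not see the length in $y$ (cf. the remark after Lemma \ref{2t2}). Either way, the essential new point is this substitution of a limiting argument, anchored to the non-compact resolvent estimates, for the Fredholm dichotomy available in the compact case.
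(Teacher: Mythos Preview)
Your regularity step is essentially the paper's: pull back by the straightening diffeomorphism $\Phi$ of Definition \ref{0gin} to the model $\mathbb{B}^d\times\mathbb{R}$, localise, apply \cite[Theorem 4.18]{mcl00} with constants controlled by the $C^{1,1}$-bound on $\rho$, and sum. The paper writes this out in more detail---it explicitly computes $d\Phi$, checks that the coerciveness constant of the pulled-back form depends only on $\rho_1=\sup\{|\rho|,|\rho|^{-1},|\nabla\rho|\}$, cuts off with translations $\chi\tau_s^*$ for $s\in\mathbb{Z}$, and sums over $s$---but the idea is the same. Your uniqueness argument is likewise what the paper intends.

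Where you diverge is on existence. The paper disposes of everything after regularity with the sentence ``The rest of the argument is the same as in Proposition \ref{2t6}.'' But Step 3 of that proposition obtains existence from uniqueness via the Fredholm alternative of \cite[Theorem 4.11]{mcl00}, which relies on the compact embedding $H^1(\Omega)\hookrightarrow L^2(\Omega)$ and does not apply on an infinite cylinder. You correctly flag this and supply an actual argument: direct Lax--Milgram when $\cre z>0$ (the form is coercive in the complex sense, as your combination $\cre B+c\,\cim B$ shows), and for $z=i\lambda$ a vanishing-parameter limit $z_\epsilon=\epsilon+i\lambda\to i\lambda$ with uniform $H^1$-bounds furnished by Proposition \ref{4t1}(1) and the trace inequality, then a weak limit. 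This works: the weak variational identity passes to the limit term by term, and the regularity step upgrades the weak $H^1$ solution to $H^2$. On this point your proof is more complete than the paper's. Your remark that the case $z=0$, $b\equiv0$ is vestigial here---the statement's compatibility pairing $\langle f,1\rangle+\langle G,1\rangle_\Gamma$ is not even well-defined on an infinite-volume domain, and Section~4 always assumes $b\ge b_0>0$---is also apt.
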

\begin{proof}
1. Regularity: we claim if $u\in H^1(\Omega)$ solves the equation \eqref{4l18}, we have $u\in H^2(\Omega)$: indeed 
\begin{equation}\label{4l19}
\|u\|_{H^2(\Omega)}\le C\left(\|u\|_{H^1(\Omega)}+\|f\|_{L^2(\Omega)}+\|G\|_{H^{1/2}(\Gamma)}\right).
\end{equation}
Consider the $C^{1,1}$-scaling function $\rho: \mathbb{S}^{d-1}_{w'}\times \mathbb{R}_{y'}\rightarrow \mathbb{R}_{\ge c_0'}$. Note $\Omega\subset \mathbb{R}^d_x\times \mathbb{R}_y$ and let $\Omega'=\mathbb{B}^{d}_{x'}\times \mathbb{R}_{y'}$. We will use the spherical coordinates in $(r,w)$ on $\mathbb{R}^d_{x}$ and $(r',w')$ on $\mathbb{R}^d_{x'}$, both of which are in $\mathbb{R}_{\ge 0}\times\mathbb{S}^{d-1}$. Consider the $C_b^{1,1}$-diffeomorphism $\Phi:(\Omega', g_{Euc})\rightarrow (\Omega,g_{Euc})$ given by 
\begin{equation}
(r, w, y)=\Phi(r',w',y')=(r'\rho(w', y'),w',y').
\end{equation}
Denote the global bound of the scaling functions by
\begin{equation}
\rho_1=\sup\{\abs{\rho(w', y')}, \abs{\rho(w', y')}^{-1}, \abs{\nabla \rho(w',y')}: (w', y')\in \mathbb{S}^{d-1}_{w'}\times\mathbb{R}_{y'}\}, 
\end{equation}
and throughout Step 1, the constants $C(\rho_1)$ in the estimates that we will show will be dependent on $\rho_1,a,b,z$ only. Compute the pushforward induced by $\Phi$:
\begin{equation}
d\Phi: (\pd_{r'}, X\cdot\nabla_{w'}, \pd_{y'})\mapsto (\rho\pd_{r}, X\cdot\nabla_w+r'(X\cdot\nabla_{w'}\rho)\pd_r,\pd_y+r'(\pd_{y'}\rho)\pd_r)
\end{equation}
for some choice of coordinates $w=w'$ on $S^{d-1}$ and any $X\cdot\nabla_{w'}\in\operatorname{Vect}(\mathbb{S}^{d-1})$. It is essential to observe that $d\Phi$ is bounded from above and below:
\begin{equation}
\|d\Phi\|_{T_{p'}\Omega'\rightarrow T_{\Phi(p')}\Omega}\le C(\rho_1),\ \|d\Phi^{-1}\|_{T_{\Phi(p')}\Omega\rightarrow T_{p'}\Omega'}\le C(\rho_1)
\end{equation}
at any $p'\in \Omega'$. The equation \eqref{4l18} on $\Omega'$ now reads
\begin{gather}
(\Delta_{g'}+z^2)\Phi^*u=\Phi^*f,\text{ on }\Omega',\\
\pd_{n'} \Phi^*u=\Phi^*G',\text{ on }\Gamma',\\
G'=G-azu-bu,
\end{gather}
where $g'=\Phi^*g_{Euc}$, and $\pd_{n'}=d\Phi^{-1}(\pd_{n})$ on $\Gamma'=\pd\Omega'$. Now pick a smooth cutoff function $\chi(y')$ identically 1 on $[-1,1]$ and supported in $[-2,2]$, and a smooth cutoff function $\tilde\chi(y')$ identically 1 on $\supp{\chi}$ and supported in $[-3,3]$. Consider the translation operator $\tau_s: (r', w', y')\mapsto (r', w', y'+s)$. Then the equation for $\chi \tau^*_s u$ is
\begin{gather}
Q_{s}\chi\tau^*_s \Phi^*u=\chi\tau^*_s \Phi^*f+[\chi, \pd_{y'}^2]\tau^*_s \Phi^*u,\text{ on }\Omega''=\mathbb{B}_{x'}\times[-3,3]_{y'},\\
\pd_{n'} \chi \tau^*_s \Phi^*u=\chi \tau^*_s \Phi^*G,\text{ on }\Gamma''=\mathbb{S}_{x'}\times[-3,3]_{y'},\\
Q_{s}=\tau^*_s \Delta_{g'}+z^2,
\end{gather}
on $(\Omega'',g_{Euc})$.
Since the first derivatives of $g'$ are bounded by $C(\rho_1)$, it could be checked that the quadratic form associated to $Q_s$,
\begin{multline}
B_s(u,v)=\langle (\Delta_{g'}+z^2) u, v\rangle+\langle \pd_{n'}u, v\rangle_{\Gamma''} 
=\sum_{i,j}\langle g'^{ij}\sqrt{g'}\pd_j u, \pd_i  (v/\sqrt{g'})\rangle\\
+z^2\langle u, v\rangle =\langle \nabla_{g'}u, \nabla_{g'} v\rangle +\sum_{i,j}\langle (\pd_i\sqrt{g'})g'^{ij}/\sqrt{g'}\pd_j u, v\rangle+z^2\langle u, v\rangle
\end{multline}
is coercive on $H^1(\Omega'')$ according to \cite[Theorem 4.6, 4.7]{mcl00}, that is,
\begin{equation}
B_s(u,u)\ge C(\rho_1)\|u\|_{H^1}^2-C(\rho_1)\|u\|_{L^2}^2
\end{equation}
for each $u\in H^1(\Omega'')$. Note that this coercive constant is independent of the parameter $s$. Now estimate:
\begin{gather}
\|[\chi, \pd_{y'}^2]\tau^*_s \Phi^*u \|\le C\|\tilde\chi \tau^*_s \Phi^*u\|_{H^1},\\
\|\chi\tau^*_s \Phi^*G\|_{H^{1/2}(\Gamma'')}\le C(\rho_1)(\|\tilde\chi \tau^*_{s}\Phi^*g\|_{H^{1/2}(\Gamma'')}+\|\tilde\chi \tau^*_{s}\Phi^*u\|_{H^1}).
\end{gather}
Invoke the standard elliptic regularity result on $\Omega''$, as one can carefully trace down the coefficient dependency in \cite[Theorem 4.18]{mcl00}, to see
\begin{equation}
\|\chi\tau^*_s\Phi^*u\|_{H^2}\le C(\rho_1)\left(\|\tilde\chi\tau^*_s\Phi^*u\|_{H^1}+\|\chi\tau^*_s \Phi^*f\|+\|\tilde\chi \tau^*_{s}\Phi^*g\|_{H^{1/2}(\Gamma'')}\right),
\end{equation}
where the constant is independent of $s$. Sum up those estimates at different $s\in\mathbb{Z}$ to see 
\begin{equation}
\|\Phi^*u\|_{H^2(\Omega')}\le C\left(\|\Phi^*u\|_{H^1(\Omega')}+\|\Phi^*f\|_{L^2(\Omega')}+\|\Phi^*G\|_{H^{1/2}(\Gamma')}\right).
\end{equation}
Since $\Phi:\Omega'\rightarrow\Omega$ is a $C^{1,1}_b$-diffeomorphism, we know $H^t(\Omega')=\Phi^*H^t(\Omega)$ for any $t\in [0,2]$ and they have equivalent norms. This implies \eqref{4l19}.

2. The rest of the argument is the same as in Proposition \ref{2t6}.
\end{proof}

From here we always assume
\begin{equation}\label{4l6}
0<b_0\le b(p)\le b_1, \ p\in\Gamma.
\end{equation}
Consider the Hilbert space $X=H^2(\Omega)\oplus H^1(\Omega)$. Let the generator be 
\begin{equation}
A=\begin{pmatrix}
0 & \id\\
-\Delta & 0
\end{pmatrix}: D(A)\rightarrow X,
\end{equation}
where $D(A)$ comprises those $(u,v)\in X$ such that
\begin{equation}
\pd_n u(p)+a(p)v(p)+b(p)u(p)=0, \ p\in\Gamma.
\end{equation}
Equip $X$ with the norm of $H^1\oplus L^2$. Consider the energy seminorm
\begin{equation}
\|(u,v)\|_E^2=\|\nabla u\|_{L^2(\Omega)}^2+\|v\|_{L^2(\Omega)}^2+\|\sqrt{b}u\|_{L^2(\Gamma)}^2.
\end{equation}
\begin{proposition}
On an infinite non-product cylinder $\Omega$ defined in Definition \ref{0gin}, impose the damping assumption \ref{0ad} and assume $a,b\in C^{0,1/2+\delta}(\Gamma)$, $b(p)\ge b_0>0$ on $\Gamma$, then the energy seminorm $E$ is a norm on $X$ and is equivalent to $H^1\oplus L^2$. Furthermore, the generator $A$ is a maximally dissipative with respect to the energy norm $E$. Hence $A$ generates a strongly continuous semigroup $e^{tA}$ which is a contraction.
\end{proposition}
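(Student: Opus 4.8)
The plan is to mirror the proof of Proposition~\ref{3t1}, which is in fact somewhat simpler in the present setting since we always impose $b\ge b_0>0$ on $\Gamma$: no codimension-one subspace of $H^1\oplus L^2$ is needed, $X=H^1(\Omega)\oplus L^2(\Omega)$ carries its product norm, and $D(A)$ is dense in $X$ as in the compact case. The argument has three parts: (i) $E$ is a norm on $X$ equivalent to the $H^1\oplus L^2$-norm; (ii) $\id-A:D(A)\to X$ is bijective; (iii) $A$ is dissipative with respect to the inner product inducing $E$. Steps (i)--(iii) together with the Lumer--Phillips theorem then give that $A$ is maximally dissipative and generates a contraction $C^0$-semigroup.

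For (i), the bound $\|(u,v)\|_E\le C\|(u,v)\|_{H^1\oplus L^2}$ is immediate from the continuity of the trace $H^1(\Omega)\to L^2(\Gamma)$ together with $b\le b_1$. For the reverse bound I would not use the open mapping argument of the compact case, since completeness of $(X,\|\cdot\|_E)$ is not transparent on a domain of infinite volume; instead I would invoke the Poincaré inequality of Lemma~\ref{2t2}, which by the remark following it continues to hold on infinite non-product cylinders because only $|x|$ need be bounded. Since $|x\cdot n_x|\le c_1$ on $\Gamma$ and $b\ge b_0>0$,
\begin{equation}
\|u\|_{L^2(\Omega)}^2\le C\|\nabla_x u\|_{L^2(\Omega)}^2+C\int_\Gamma(x\cdot n_x)\,|u|^2\le C\|\nabla u\|_{L^2(\Omega)}^2+C\|\sqrt{b}\,u\|_{L^2(\Gamma)}^2,
\end{equation}
whose right side is $\le C\|(u,v)\|_E^2$; hence $\|(u,v)\|_{H^1\oplus L^2}^2\le C\|(u,v)\|_E^2$. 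In particular $\|(u,v)\|_E=0$ forces $u=v=0$, so $E$ is a norm. Note also that on an infinite-volume domain no nonzero constant lies in $L^2(\Omega)$, so the constant-mode subtleties of Proposition~\ref{3t1} do not recur here.

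For (ii), given $(\tilde u,\tilde v)\in X$ set $v=u-\tilde u$; then $(\id-A)(u,v)=(\tilde u,\tilde v)$ is equivalent to the Robin problem $\Delta u+u=\tilde u+\tilde v\in L^2(\Omega)$ with $\pd_n u+(a+b)u=a\tilde u$ on $\Gamma$. Since $a\in C^{0,1/2+\delta}(\Gamma)$ is a multiplier on $H^{1/2}(\Gamma)$ by Lemma~\ref{2t5}, the datum $a\tilde u$ lies in $H^{1/2}(\Gamma)$, so the classical elliptic regularity result for infinite non-product cylinders (applied with $z=1$, so $\cre z>0$, and $b\ge b_0>0$) yields a unique $u\in H^2(\Omega)$; then $(u,u-\tilde u)\in D(A)$ is the unique preimage, proving bijectivity. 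Part (iii) is the same computation as \eqref{3l5}: for $(u,v)\in D(A)$, Green's identity and the boundary relation $\pd_n u+av+bu=0$ (recall $\pd\Omega=\Gamma$ here) give $\cre\langle A(u,v),(u,v)\rangle_E=-\int_\Gamma a|v|^2\le 0$. Combining (i)--(iii): $A$ is densely defined, dissipative in the Hilbert-space norm $E$, and $\ran(\id-A)=X$, so $A$ is maximally dissipative and, by Lumer--Phillips, generates a contraction $C^0$-semigroup $e^{tA}$.

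The step I expect to need the most care is (i): one must be sure that the Poincaré inequality of Lemma~\ref{2t2} and the elliptic estimate underlying (ii) hold with constants uniform along the infinite cylinder --- which is precisely what those cited results were arranged to provide --- and the hypothesis $b\ge b_0>0$ is genuinely load-bearing, since it both annihilates the near-kernel at zero frequency on the infinite-volume domain and lets the Poincaré bound close the norm equivalence without any compactness argument.
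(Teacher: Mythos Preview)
Your proof is correct and follows the paper's strategy of mirroring Proposition~\ref{3t1}, which is indeed all the paper says here. The one genuine difference is in part~(i): Proposition~\ref{3t1} passes from the one-sided bound $\|(u,v)\|_E\le C\|(u,v)\|_{H^1\oplus L^2}$ to full equivalence via the open mapping theorem, which tacitly assumes $(X,\|\cdot\|_E)$ is complete; you instead obtain the reverse inequality directly from the Poincar\'e inequality of Lemma~\ref{2t2} together with $b\ge b_0>0$. On the infinite-volume domain this is the more robust route, since completeness of $(X,E)$ is not immediate and your argument makes explicit where the lower bound on $b$ is used. Parts~(ii) and~(iii) are exactly as the paper intends, invoking the infinite-cylinder elliptic regularity proposition in place of Proposition~\ref{2t6} and repeating the dissipativity computation \eqref{3l5}.
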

\begin{proof}
Similar to the proof of Proposition \ref{3t1}. Note that in this case we have $b\ge b_0>0$ on $\Gamma$. 
\end{proof}

\begin{proposition}
On a infinite non-product cylinder $\Omega$ defined in Definition \ref{0gin}, impose the damping assumption \ref{0ad}, and assume $b(x,y)\ge b_0>0$ on $\Gamma$, then the generator $A:D(A)\rightarrow X$ has no purely imaginary spectrum. Furthermore we have
\begin{equation}\label{4l5}
\|(A+i\lambda)^{-1}\|_{E\rightarrow E}\le C \langle \lambda\rangle^3.
\end{equation}
Assume further that $\Omega$ is an infinite product cylinder defined in Definition \ref{0gp}, then we have
\begin{equation}\label{4l7}
\|(A+i\lambda)^{-1}\|_{E\rightarrow E}\le C\langle \lambda\rangle^2,
\end{equation}
which improves \eqref{4l5}. 
\end{proposition}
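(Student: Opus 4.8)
The plan is to transcribe the proof of Proposition~\ref{3t2} almost verbatim, substituting the resolvent estimates of Proposition~\ref{4t1} for those of Proposition~\ref{2t4}, with two modifications forced by the loss of compactness. First, since the resolvent of $A$ is no longer compact, I cannot deduce discreteness of $\operatorname{Spec}(A)$ nor isolate $0$ from the spectrum by abstract means; the absence of purely imaginary spectrum and the resolvent bound must instead be produced directly, and this is exactly where the standing hypothesis \eqref{4l6} that $b\ge b_0>0$ is used, since it unlocks the uniform ($\epsilon=0$) estimates of Proposition~\ref{4t1}(3) down to and including the zero frequency. Second, in the monotonicity step I will compare against $\underline a\equiv a_0$ and $\underline b\equiv b_0$, rather than $\underline b\equiv 0$ as in the compact case, so that the replacement damping retains a lower bound.

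I would first show $i\setr\cap\operatorname{Spec}(A)=\emptyset$. Fixing $\lambda\in\setr$ and writing $(A+i\lambda)(u,v)=(\tilde u,\tilde v)$, elimination of $v=\tilde u-i\lambda u$ turns this into a Helmholtz boundary value problem $(\Delta+z^2)u=i\lambda\tilde u-\tilde v$ on $\Omega$ with $z=-i\lambda$ (so $\cre z=0$) and $\pd_n u+azu+bu=-a\gamma\tilde u$ on $\Gamma$, where $a\gamma\tilde u\in H^{1/2}(\Gamma)$ by the multiplier estimate Lemma~\ref{2t5}. Since $b\ge b_0>0$, the classical elliptic regularity proposition on infinite non-product cylinders gives a unique weak solution $u\in H^1(\Omega)$, necessarily in $H^2(\Omega)$, so $A+i\lambda\colon D(A)\to X$ is a bijection, hence has bounded inverse by the closed graph theorem, and $i\lambda\notin\operatorname{Spec}(A)$.

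For the quantitative bound I would argue by contradiction. If it fails with exponent $k$ ($k=2$ on an infinite product cylinder, $k=3$ otherwise), there are real $\lambda_n$ and $(u_n,v_n)\in D(A)$ with $\|(u_n,v_n)\|_E=1$ and $(A+i\lambda_n)(u_n,v_n)=o_E(\langle\lambda_n\rangle^{-k})$. Pairing with $u_n$ and using the dissipativity identity \eqref{3l5}, exactly as in the proof of Proposition~\ref{3t2}, yields smallness of $\|\sqrt a v_n\|_\Gamma$ and $\|u_n\|_\Gamma$, the relations $\|v_n\|\sim\langle\lambda_n\rangle\|u_n\|$ and $\|\nabla u_n\|\sim1$, and shows that $u_n$ solves an interior impedance problem \eqref{4l3}--\eqref{4l4} with $L^2(\Omega)$-forcing of size $o(\langle\lambda_n\rangle^{-1})$ (one order smaller in the non-product case, where I presume instead the sharper quasimode $o_E(\langle\lambda_n\rangle^{-3})$) and boundary datum $o(\langle\lambda_n\rangle^{-1})$ on $\Gamma$. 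When $\{\lambda_n\}$ is bounded I would invoke Proposition~\ref{4t1}(3) directly, using $b\ge b_0>0$ to take $\epsilon=0$, obtaining $\|u_n\|=o(1)$ and hence $\|(u_n,v_n)\|_E=o(1)$, a contradiction. When $\lambda_n\to\pm\infty$ I set $h=\abs{\lambda_n}^{-1}\to0$ and run the monotonicity argument with $\underline a\equiv a_0$, $\underline b\equiv b_0$: both are smooth, so $\pd_y\underline a,\pd_y\underline b\equiv0\in C^{0,1/2+\delta}(\Gamma)$; one has $0<a_0\le\underline a\le a$ and $0<b_0\le\underline b\le b$, while $a-\underline a\le C\underline a$ and $b-\underline b\le(b_1/a_0)a_0=C\underline a$, so by the smallness of $\|u_n\|_\Gamma$ the boundary remainders are absorbed as in \eqref{3l14}, reducing the problem to a quasimode for $\underline a,\underline b$. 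Using the elliptic regularity proposition I split off $w_h$ solving the homogeneous-boundary problem with the same $o_{H^1}(h)$ interior forcing, control $\|u_h-w_h\|=o(h)$ by \eqref{4l10} of Proposition~\ref{4t1}(1), and on a product cylinder bound $\|w_h\|=o(h)$ by applying Proposition~\ref{4t1}(2) to $h\pd_y w_h$ in the manner of Proposition~\ref{2t3}; on a non-product cylinder I oversimplify the forcing to $o_{L^2}(h^2)$ and apply \eqref{4l10} to $u_h$ directly, losing one order, which is the source of the weaker exponent $k=3$. In every case $\|u_h\|=o(h)$ contradicts $\|u_h\|\sim h/\sqrt2$, proving \eqref{4l7} and \eqref{4l5} and re-confirming the emptiness of the purely imaginary spectrum at high frequency.

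I expect the main obstacle to be the compatibility of the monotonicity argument with the hypothesis $b\ge b_0>0$: in the compact case one could replace $b$ by $0$ because the low-frequency range was handled by discreteness together with $0\notin\operatorname{Spec}(A)$, whereas here the uniform resolvent estimates — the only tool available near $\lambda=0$ — demand a damping bounded below, so one must verify that $\underline b\equiv b_0$ is simultaneously (i) at least as regular as $b$, (ii) dominated by $b$, and (iii) such that $b-\underline b$ is still controlled by $\underline a$, ensuring the boundary remainders in the monotonicity step are absorbed. Beyond this the argument is a routine adaptation, since Lemma~\ref{2t2}, the multiplier estimate Lemma~\ref{2t5}, and the elliptic regularity theory were already formulated to work on non-compact cylinders.
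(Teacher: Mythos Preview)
Your proposal is correct and follows essentially the same route as the paper. The only organisational differences are that you establish $i\setr\cap\operatorname{Spec}(A)=\emptyset$ separately via the elliptic regularity/bijectivity argument before turning to the quantitative bound (the paper obtains it as a byproduct of the resolvent estimate), and that you make the explicit choice $\underline b\equiv b_0$ in the monotonicity step, whereas the paper leaves the replacement $\underline b$ unspecified beyond $0\le\underline b\le b$; your choice is the natural one here and cleanly resolves the compatibility issue you flag at the end.
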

\begin{proof}
We will follow a similar set-up as in the proof of Proposition \ref{3t2}. Note that $\Omega$ is no longer compact in this geometric setting, whence the Step 1 and 2 there do not work and the spectrum is not necessarily discrete. We will deal with the zero-frequency quasimodes in a more direct manner.

1. We first show \eqref{4l7} when $\Omega$ is an infinite product cylinder. Assume \eqref{4l7} is not true, and there are $\lambda_n$ and $(u_n, v_n)\in D(A)$ with
\begin{equation}\label{4l2}
\|(u_n, v_n)\|_E^2=\|\nabla u_n\|^2+\|v_n\|^2+\|b^{1/2}u_n\|^2_{\Gamma}\equiv 1
\end{equation}
such that $(A+i\lambda_n)(u_n, v_n)=o_E(\langle \lambda_n\rangle^{-2})=o_X(\langle \lambda_n\rangle^{-2})$, which is
\begin{gather}
i\lambda_n u_n+v_n=o_{H^1}(\langle \lambda_n\rangle^{-2}),\\
-\Delta u_n+i\lambda_n v_n=o_{L^2}(\langle \lambda_n\rangle^{-2}),
\end{gather}
which could be written as
\begin{gather}
v_n=- i\lambda_n u_n+o_{H^1}(\langle \lambda_n\rangle^{-2}),\\
\Delta u_n-\lambda_n^2 u_n=o_{L^2}(\langle \lambda_n\rangle^{-2})+o_{H^1}(\langle \lambda_n\rangle^{-1}).
\end{gather}
Replace the damping functions as in Step 4 of the proof of Proposition \ref{3t2} to see
\begin{gather}\label{4l8}
\Delta u_n-\lambda_n^2 u_n=o_{L^2}(\langle \lambda_n \rangle^{-2})+o_{H^1}(\langle \lambda_n \rangle^{-1}),\\
\label{4l9}
i\pd_n u_n\pm \underline{a}\lambda_n u_n+i\underline{b}u_n=g_n=o_{\Gamma}(\langle \lambda_n \rangle^{-1}),\ g_n\in H^{1/2}(\Gamma),
\end{gather}
where $\underline{a}(p),\underline{b}(p)\in C^{0,1/2+\delta}(\Gamma)\cap L^\infty(\Gamma)$ satisfy that
\begin{gather}
0< a_0\le \underline{a}(p)\le a(p), \ p\in \Gamma,\\
0\le \underline{b}(p)\le b(p), \ p\in \Gamma,\\
\pd_y\underline{a}(x,y),\pd_y\underline{b}(x,y)\in C^{0,1/2+\delta}(\Gamma)\cap L^\infty(\Gamma).
\end{gather}
The high and low frequency regimes are similar to those in Step 5a and 5b of the proof of Proposition \ref{3t2}, with a monotonicity argument, as in those regimes $\langle \lambda\rangle$ is comparable to $\abs{\lambda}$. We are left with the zero frequency regime.

2. The zero frequency regime: assume $\lambda=\abs{\lambda_n}\rightarrow 0$. The system \eqref{4l8} to \eqref{4l9} now reads
\begin{gather}
\label{4l1}
\Delta u_\lambda-\lambda^2 u_\lambda=o_{L^2}(1),\\
(i\pd_n + \underline{a} \lambda+i\underline{b})u_\lambda=g_\lambda=o_{\Gamma}(1), \ g_\lambda\in H^{1/2}(\Gamma).
\end{gather}
As $b\ge b_0>0$, the estimate \eqref{4l10} from Proposition \ref{4t1} with $\epsilon=0$ implies
\begin{equation}
\|u_\lambda\|^2+\|\nabla u_\lambda\|^2=o(1),
\end{equation}
implying
\begin{equation}
\|v_\lambda\|^2=o(1), \, \|u_\lambda\|_{\Gamma}^2\le C(\|u_\lambda\|^2+\|\nabla u_\lambda\|^2)=o(1),
\end{equation}
which contradicts \eqref{4l2}. This concludes the proof of \eqref{4l7}.

3. The proof of \eqref{4l5} is similar. 
\end{proof}

\begin{proof}[Proof of Theorem \ref{0t4} and Theorem \ref{0t5}]
Apply Theorem \ref{3t3} of Borichev and Tomilov to \eqref{4l5} and \eqref{4l7} from Proposition \ref{4t1} to conclude the proof. 
\end{proof}

\begin{proposition}[Sharpness]\label{4t2}
Let $\Omega=\mathbb{B}^d_x\times \mathbb{R}_y$, where $\mathbb{B}^d$ is the unit closed ball in dimension $d\ge 2$. Consider the eigenvalue problem
\begin{gather}
P_\lambda u(x,y)=(\Delta-\lambda^2)u(x, y)=0, \ x\in \Omega,\\
i\pd_n u(x,y)+\lambda u(x, y)+i u(x,y)=0, \ (x,y)\in \mathbb{S}^{d-1}_x\times \mathbb{R}_y,
\end{gather}
in the distributional sense. Then there exists some $C>0$ such that the spectrum contains the set $\{\lambda_{\eta_0}:\eta_0\in \mathbb{R}, \abs{\eta_0}\ge C\}$ in which the complex eigenvalues
\begin{equation}
\lambda_{\eta_0}=\sgn J'_{d/2-1}(\lambda'_0)\sqrt{(\lambda'_0)^2+\eta_0^2}-i\eta_0^{-1}(\sqrt{(\lambda'_0)^2+\eta_0^2})^{-1}+\bigo(\eta_0^{-3})
\end{equation}
correspond to some generalised eigenfunctions in $H^{-1-\epsilon}(\Omega)$ for any $\epsilon>0$, where $J_{d/2-1}$ is the Bessel function of parameter $d/2-1$, $\lambda'_0$ is a positive real zero of $J_{d/2-1}$. Moreover there exist $o_E(\cre(\lambda_{\eta_0})^{-2+\epsilon})$-quasimodes, independent of $\epsilon>0$, of the semigroup generator $A$ at $i\cre(\lambda_{\eta_0})$ as $\cre(\lambda_{\eta_0})\rightarrow \infty$. Hence it is impossible for the damped wave equation \eqref{1l10} to be stable at the rate $t^{-1/(2-\epsilon)}$ for any $\epsilon>0$ in those geometric settings.
\end{proposition}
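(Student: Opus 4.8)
The plan is to transcribe the separation-of-variables construction from the proof of Proposition~\ref{3t5} to the infinite cylinder $\Omega=\mathbb{B}^d_x\times\setr_y$, replacing the discrete Fourier modes $\cos(ky)$, $k\in\setn$, by the continuous family $e^{i\eta_0 y}$, $\eta_0\in\setr$. This yields the generalised eigenfunctions in the statement; a truncation in $y$ then converts them into genuine quasimodes of the generator $A$, to which the Borichev--Tomilov theorem (Theorem~\ref{3t3}) applies.

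First, writing $x=(r,w)\in\setr_{\ge0}\times\mathbb{S}^{d-1}$ and looking for rotationally symmetric solutions $u(x,y)=w_{\eta_0}(r)e^{i\eta_0 y}$ with $\lambda^2=(\lambda')^2+\eta_0^2$, the equation $P_\lambda u=0$ reduces, exactly as in Proposition~\ref{3t5}, to a Bessel equation on $r\in[0,1]$ whose solution regular at the origin is built from $J_{d/2-1}(\lambda' r)$, and the boundary condition $i\pd_n u+(\lambda+i)u=0$ at $r=1$ becomes a transcendental relation between $\lambda'$ and $\lambda$. The only new feature relative to Proposition~\ref{3t5} is the lower-order summand $iu$, i.e.\ a damping $b\equiv1$: near a positive zero $\lambda_0'$ of $J_{d/2-1}$ one has $w_{\eta_0}(1)=\bigo(\eta_0^{-1})$, so this term enters the transcendental relation only at size $\bigo(\eta_0^{-1})$ and perturbs the root by $\bigo(\eta_0^{-3})$, leaving the asymptotics unchanged. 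Squaring the boundary relation and applying the implicit function theorem near $\lambda_0'$ --- precisely as in Steps 3--4 of the proof of Proposition~\ref{3t5}, none of which uses that the large parameter is an integer --- gives, for $|\eta_0|$ large, a root $\lambda'_{\eta_0}=\lambda_0'-i\lambda_0'\sgn J'_{d/2-1}(\lambda_0')\,\eta_0^{-1}+\bigo(\eta_0^{-2})$ and hence $\lambda_{\eta_0}=\sqrt{(\lambda'_{\eta_0})^2+\eta_0^2}$ with the stated expansion; the conjugation symmetry of $J_{d/2-1}$ used in Step 4 there selects the branch with $\cim\lambda_{\eta_0}<0$. The resulting $U_{\eta_0}(x,y)=w_{\eta_0}(r)e^{i\eta_0 y}$ is smooth on $\overline\Omega$ and bounded, solves \eqref{4l3}--\eqref{4l4} with $f=g=0$ classically, hence distributionally, and, being bounded, defines a generalised eigenfunction in $H^{-1-\epsilon}(\Omega)$ for every $\epsilon>0$.

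Next, fix $\chi\in\fsccinf(\setr)$ with $\chi\equiv1$ on $[-\tfrac12,\tfrac12]$ and $\supp\chi\subset[-1,1]$, and set $u_N=\chi(y/N)U_{\eta_0}\in H^2(\Omega)$. Since $\pd_n$ on $\Gamma=\mathbb{S}^{d-1}\times\setr_y$ is the radial derivative it commutes with multiplication by $\chi(y/N)$, so $u_N$ satisfies the impedance boundary condition exactly and $(u_N,-i\lambda_{\eta_0}u_N)\in D(A)$; in the interior $P_{\lambda_{\eta_0}}u_N=[\Delta,\chi(\cdot/N)]U_{\eta_0}$ is supported in $\{N/2\le|y|\le N\}$ with $\|P_{\lambda_{\eta_0}}u_N\|_{L^2(\Omega)}\le CN^{-1/2}\langle\eta_0\rangle$, while $\|u_N\|_{L^2(\Omega)}^2\sim cN$ and $\|(u_N,-i\lambda_{\eta_0}u_N)\|_E^2\sim cN\eta_0^{2}$. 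Writing $\lambda_{\eta_0}=\mu_{\eta_0}-i\sigma_{\eta_0}$ with $\mu_{\eta_0}\sim|\eta_0|\to\infty$ and $\sigma_{\eta_0}\sim|\eta_0|^{-2}\to0^+$, the computation of Step 5 of the proof of Proposition~\ref{3t5} gives
\[
(A+i\mu_{\eta_0})(u_N,-i\lambda_{\eta_0}u_N)=-\sigma_{\eta_0}(u_N,-i\lambda_{\eta_0}u_N)-(0,P_{\lambda_{\eta_0}}u_N),
\]
and $\|(0,P_{\lambda_{\eta_0}}u_N)\|_E=\|P_{\lambda_{\eta_0}}u_N\|_{L^2}\le CN^{-1}\|(u_N,-i\lambda_{\eta_0}u_N)\|_E$. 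Taking $N=N(\eta_0)$ large enough that $CN^{-1}\le\sigma_{\eta_0}$ produces, for each large $|\eta_0|$, a quasimode of $A$ at $-i\mu_{\eta_0}$ of quality $\bigo(\sigma_{\eta_0})=\bigo(\mu_{\eta_0}^{-2})=o_E(\mu_{\eta_0}^{-2+\epsilon})$ for every $\epsilon>0$; hence $\|(A+i\mu_{\eta_0})^{-1}\|_{E\to E}\ge c\,\sigma_{\eta_0}^{-1}\sim\mu_{\eta_0}^{2}$ fails to be $\bigo(\mu_{\eta_0}^{2-\epsilon})$, and by Theorem~\ref{3t3} the damped wave equation \eqref{1l10} cannot be stable at the rate $t^{-1/(2-\epsilon)}$ for any $\epsilon>0$.

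The Bessel-function analysis is a verbatim port of Proposition~\ref{3t5}, so the two genuinely new points --- and the places I would expect to spend effort --- are: (i) confirming that the lower-order $iu$ boundary term (present only because the infinite-cylinder theory requires $b\ge b_0>0$) perturbs the transcendental equation merely at order $\eta_0^{-1}$, so that the $\eta_0^{-2}$ decay rate is preserved rather than degraded; and (ii) making the $y$-truncation quantitative, so that the commutator error $[\Delta,\chi(\cdot/N)]U_{\eta_0}$, weighed against the large energy norm $\sim N^{1/2}|\eta_0|$ of the quasimode, is $o(\sigma_{\eta_0})$ --- this forces $N$ to grow with $\eta_0$ and is the only step where the non-compactness of $\Omega$ really enters.
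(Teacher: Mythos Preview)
Your quasimode argument is correct and delivers the sharpness, but you and the paper localize differently. The paper cuts off in \emph{frequency}: it constructs the whole family $w_\eta$ (for $\eta$ ranging over an interval, via the implicit function theorem), applies the Fourier multiplier $\chi_{\eta_0}(\eta)=\chi\bigl(\eta_0^2(\eta-\eta_0)\bigr)$ supported in an $\eta_0^{-2}$-window around $\eta_0$, and sets $u_{\eta_0}=\mathcal F^{-1}_{y\leftarrow\eta}(\chi_{\eta_0}w_\eta)$. The interior error then comes from $\lambda_\eta^2-\lambda_{\eta_0}^2=\mathcal O(\eta_0^{-1})$ on $\supp\chi_{\eta_0}$ and is $\mathcal O_{L^2}(\eta_0^{-1}\|u_{\eta_0}\|)$ directly, with no auxiliary scale. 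Your physical-space cutoff $\chi(y/N)$ is more elementary and keeps the boundary condition trivially intact, but costs a second parameter $N\gtrsim\eta_0^2$ that has to be tuned against $\sigma_{\eta_0}$; both routes land an $\mathcal O(\mu_{\eta_0}^{-2})$-quasimode of equal quality. The paper also folds the $b\equiv1$ term directly into the transcendental function $F$ rather than treating it as a perturbation --- since $F(\lambda_0')=0$ and $F'(\lambda_0')$ are unaffected by the extra terms your perturbative reasoning is valid, though the direct inclusion is cleaner to write up.

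One genuine slip: the plane wave $U_{\eta_0}=w_{\eta_0}(r)e^{i\eta_0 y}$ is \emph{not} in $H^{-1-\epsilon}(\Omega)$ --- its $y$-Fourier transform is a Dirac mass at $\eta_0$, which lies in no Sobolev space; boundedness does not buy Sobolev membership on a non-compact domain. The paper's $H^{-1-\epsilon}$ remark refers instead to $\mathcal F^{-1}_{y\leftarrow\eta}w_\eta$ assembled from the full $\eta$-family (which is merely bounded in $\eta$ on the Fourier side, hence in a weighted $L^2$). This point is tangential to the sharpness conclusion, which your truncated quasimodes establish correctly, but the sentence as written is false.
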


\begin{proof}Although the proof will be similar to that of Proposition \ref{3t5}, note that now the domain is not bounded in $y$ and we would construct quasimodes instead of finding exact eigenfunctions.

1. We use the Fourier transform on $\mathbb{R}_y$ to turn position $y$ into momentum $\eta$. That is, instead of \eqref{3l30} we look at $\hat u (x, \eta)$ and the system on $\mathbb{B}^d$. We will look for $w_\eta(x)$ being the solution to
\begin{gather}\label{4l11}
(\Delta_x-\lambda'^2)w_{\eta}(x)=0, \ x\in\mathbb{B}^d,\\
\label{4l17}
i\pd_n w_{\eta}(x)+\lambda w_{\eta}(x)+i w_{\eta}(x)=0, \ x\in\mathbb{S}^{d-1},
\end{gather}
where we define $\lambda^2=\lambda'^2+\eta^2$. Let $J=J_{d/2-1}$. Instead of \eqref{3l31} we consider for large $\eta$ that
\begin{equation}
F(z)=\frac{J(z)}{\left(-(z^2+1)J(z)^2-z^2J'(z)^2-2zJ(z)J'(z)\right)^{\frac{1}2}}=\frac{1}\eta
\end{equation}
near $z=\lambda_0'>0$ where $\lambda_0'$ is a real positive zero of $J(z)$. Note that in a sufficiently small complex neighbourhood of $\lambda_0'$ what is inside the square root is close to $-\lambda_0'^2J'(\lambda_0')^2<0$ and is away from the branch cut $\mathbb{R}_{\ge 0}$. Compute
\begin{equation}
F'(\lambda_0')=i(\lambda'_0)^{-1}\sgn J_{d/2-1}'(\lambda'_0)
\end{equation}
and invoke the implicit function theorem for large $\eta$ to see the existence of $\lambda_{\eta}'\in\mathbb{C}$ such that
\begin{equation}\label{4l12}
F(\lambda_\eta')=\frac{J(\lambda_\eta')}{\left(-(\lambda_\eta'^2+1)J(\lambda_\eta')^2-\lambda_\eta'^2J'(\lambda_\eta')^2-2\lambda_\eta'J(\lambda_\eta')J'(\lambda_\eta')\right)^{\frac{1}2}}=\frac{1}\eta
\end{equation}
where we have the same asymptotics to \eqref{3l32} that
\begin{equation}
\lambda_\eta'=\lambda_0'-i(\lambda_0')\sgn J'(\lambda_0')\eta^{-1}+\bigo(\eta^{-2})
\end{equation}
and to \eqref{3l28} that
\begin{equation}\label{4l13}
\lambda_\eta=\sqrt{\lambda_\eta'^2+\eta^2}=\sgn J'(\lambda'_0)\sqrt{\lambda_0'^2+\eta^2}-i\eta^{-1}(\sqrt{\lambda_0'^2+\eta^2})^{-1}+\bigo(\eta^{-3}),
\end{equation}
while $\lambda'_\eta$ and $\lambda_\eta$ depend $C^1$-continuously on $\eta$. Now set $w_\eta(x)=J(\lambda_\eta' \abs{x})$ that solves the system \eqref{4l11} using the argument similar to that in Step 4 of the proof of Proposition \ref{3t5}. Indeed, \eqref{4l12} implies
\begin{equation}
\left(\lambda_\eta J(\lambda_\eta')+i(J(\lambda_\eta')+\lambda_\eta'J'(\lambda_\eta'))\right)\left(\lambda_\eta J(\lambda_\eta')-i(J(\lambda_\eta')+\lambda_\eta'J'(\lambda_\eta'))\right)=0
\end{equation}
which is similar to \eqref{3l33} and the rest follows. Note that $w_\eta(x)$ is not in $L^2(\mathbb{B}^d\times\mathbb{R}_\eta)$ but only in $\langle \eta\rangle^{1+\epsilon}L^2$, or in $L^2_{loc}$, as $w_\eta(x)\sim J(\lambda_0 \abs{x})$, which looks like a constant in $\eta\rightarrow \infty$. 

2. Now we localise the $L^2_{loc}$-solutions $w_\eta$ to \eqref{4l11} in the frequencies to get $L^2$-quasimodes. Let $\chi(\eta)\in \fsccinf(\mathbb{R})$ be a non-negative cutoff function which is supported in $[-1,1]$ and is identically 1 on $[-1/2, 1/2]$. Let the frequency-space cutoff near a fixed frequency $\eta_0\in\mathbb{R}$ sufficiently away from 0 be 
\begin{equation}
\chi_{\eta_0}(\eta)=\chi\left(\eta_0^2(\eta-\eta_0)\right),
\end{equation}
which is supported inside $[\eta_0-\eta_0^{-2}, \eta_0+\eta_0^{-2}]$, an $\eta_0^{-2}$-neighbourhood of $\eta_0$. We will use them as Fourier multipliers. Also note that from the asymptotics \eqref{4l13},
\begin{equation}\label{4l14}
\lambda_\eta^2-\lambda_{\eta_0}^2=(\eta-\eta_0)(\eta+\eta_0)+\bigo(\eta_0^{-1})=\bigo(\eta_0^{-1})
\end{equation}
on $\supp\chi_{\eta_0}$. Let
\begin{equation}\label{4l15}
u_{\eta_0}(x,y)=\chi_{\eta_0}(D_y)(\mathcal{F}_{y\leftarrow \eta}^{-1}w_\eta)(x, y)=\mathcal{F}_{y\leftarrow \eta}^{-1}\left(\chi_{\eta_0}(\eta)w_\eta(x)\right).
\end{equation}
Note that $u_{\eta_0}$ is in $H^1(\mathbb{B}^d\times\mathbb{R}_y)$, since $\chi_{\eta_0}w_\eta$ is $C^1_c$ and in $\langle \eta\rangle^{-1}L^2(\mathbb{B}^d\times\mathbb{R}_\eta)$. Then from \eqref{4l14} we have
\begin{multline}
\mathcal{F}_{y\leftarrow \eta}^{-1}(\lambda_\eta^2\chi_{\eta_0}w_\eta)=\lambda_{\eta_0}^2\mathcal{F}_{y\leftarrow \eta}^{-1}(\chi_{\eta_0}w_\eta)+\mathcal{F}_{y\leftarrow \eta}^{-1}(\bigo(\eta_0^{-1})\chi_{\eta_0}w_\eta)\\
=\lambda_{\eta_0}^2u_{\eta_0}+\bigo_{L^2}(\eta_0^{-1}u_{\eta_0}),
\end{multline}
since by using the Plancherel theorem twice we have
\begin{equation}
\|\mathcal{F}_{y\leftarrow \eta}^{-1}(\bigo(\eta_0^{-1})\chi_{\eta_0}w_\eta)\|=\|\bigo(\eta_0^{-1})\chi_{\eta_0}w_\eta\|=\bigo(\eta_0^{-1})\|u_{\eta_0}\|.
\end{equation}
Now observe for fixed $\eta_0$ large, we have from \eqref{4l15} that
\begin{multline}\label{4l16}
(\Delta-\lambda_{\eta_0}^2)u_{\eta_0}=\mathcal{F}_{y\leftarrow \eta}^{-1}\left((\Delta_x-(\lambda_{\eta}^2-\eta^2))\chi_{\eta_0}(\eta)w_\eta\right)+\bigo_{L^2}(\eta_0^{-1}u_{\eta_0})\\
=\mathcal{F}_{y\leftarrow \eta}^{-1}\left(\chi_{\eta_0}(\eta)(\Delta_x-(\lambda_{\eta}^2-\eta^2))w_\eta\right)+\bigo_{L^2}(\eta_0^{-1}u_{\eta_0})=\bigo_{L^2}(\eta_0^{-1}u_{\eta_0}),
\end{multline}
the term undergoing the inverse Fourier transform of which vanishes since $w_\eta$ solves \eqref{4l11} and meets the boundary condition \eqref{4l17}. Now write $\lambda_{\eta_0}=\mu_{\eta_0}-i\sigma_{\eta_0}$ with
\begin{equation}
\mu_{\eta_0}\sim\eta_0\rightarrow \infty, \ \sigma_{\eta_0}\sim \eta_0^{-2}\rightarrow 0^+
\end{equation}
up to renumbering. Let $v_{\eta_0}=i\lambda_{\eta_0}u_{\eta_0}$ and from \eqref{4l16} we have
\begin{multline}
(A-i\mu_{\eta_0})(u_{\eta_0},v_{\eta_0})=\sigma_{\eta_0}(u_{\eta_0},v_{\eta_0})+(0,\bigo_{L^2}(\eta_0^{-1}u_{\eta_0}))\sim \mu_{\eta_0}^{-2}(u_{\eta_0},v_{\eta_0})\\
+(0, \bigo_{L^2}(\mu_{\eta_0}^{-1}\eta_0^{-1}v_{\eta_0}))=o_{E}(\mu_{\eta_0}^{-2+\epsilon}(u_{\eta_0},v_{\eta_0})),
\end{multline}
whence
\begin{equation}
\|A-i\mu_{\eta_0}\|_{E\rightarrow E}=o(\mu_{\eta_0}^{-2+\epsilon})
\end{equation}
as $\mu_{\eta_0}\rightarrow \infty$. Invoke Theorem \ref{3t3} of Borichev and Tomilov to conclude the proof. 
\end{proof}

\bibliography{bib}
\bibliographystyle{amsalpha}

\end{document}